\documentclass[12pt]{article}
\voffset 0.0cm
\topmargin 0.0cm
\headheight 0.0cm
\headsep 0.0cm
\hoffset 0.0cm
\textheight 9in
\textwidth 6.5in
\oddsidemargin 0.0cm

\usepackage[dvips]{graphicx}
\usepackage[sumlimits,]{amsmath}
\usepackage{amsthm}
\usepackage{amssymb}


\def \bfH { {\mathbf{H}}}

\def \calD { {\mathcal D}}
\def \calE { {\mathcal E}}
\def \calF { {\mathcal F}}

\def \calN { {\mathcal N}}
\def \calS { {\mathcal S}}
\def \calU { {\mathcal U}}
\def \d { {\,\mbox{d}}}

\def \dPxi { {\,\mbox{dP}_\xi}}
\def \ds { \,\mbox{d}s}
\def \dt { \,\mbox{d}t}
\def \dv { \,\mbox{d}v}
\def \dW { \,\mbox{d}W}
\def \dx { \, \mbox{d}x}
\def \dy { \, \mbox{d}y}

\def \iid { {\mbox{i.i.d.}}}

\def \Pxi { {\mbox{P}_\xi}}

\def \st {:\,}


\def \aNupper { {\overline a_N}}
\def \aNlower { {\underline a_N}}
\def \as { {\emph{a.s. }}}

\def \Aeps { {A_\eps}}

\def \Beps { {B_\eps}}

\def \Covalpha { {\mbox{Cov}_\alpha}}
\def \Covxi { {\mbox{Cov}_\xi}}

\def \eps { {\varepsilon}}
\def \epsint { {\lfloor\eps^{-1}\rfloor}}

\def \HNlower { \underline \bfH_N}
\def \HNupper { \overline \bfH_N}

\def \Ieps { {I_\eps}}
\def \Iueps { {I_\ueps}}
\def \Itilde { {\tilde I}}
\def \IZeps { {I_\Zeps}}
\def \noverN { {\frac{n}{N}}}
\def \qeps { {q_\eps}}

\def \range { {\mbox{Range}}}

\def \sbar { {\bar s}}

\def \SNhat { {\hat S_N}}
\def \sovereps { {\frac{s}{\eps}}}
\def \soverN { {\frac{s}{N}}}
\def \tovereps { {\frac{t}{\eps}}}

\def \ueps { {u_\eps}}

\def \veps { {v_\eps}}

\def \Yeps { {Y_\eps}}
\def \Zeps { {Z_\eps}}
\def \xovereps { {\frac{x}{\eps}}}


\newcommand{\brac}[1]{ \langle #1 \rangle}

\newcommand{\Exp}[1]{ \E\left\{ #1 \right\} } 
\newcommand{\Expxi}[1]{ \E_\xi\left\{ #1 \right\} } 
\newcommand{\Exppik}[2]{ \E_{\pi_{#1}}\left\{ #2 \right\} } 
 
\def \ddx { \frac{\mbox{d}}{\mbox{d} x}}

\def \E { {\mathbb E}}
\def \Exi { {\mathbb E}_\xi}

\def \g { |\,}

\def \p {\partial}


\def \Nat {{\mathbb N}}
\def \one { \mathbf{1}}

\def \Rn { {{\mathbb R}^n}}

\def \Rm {{\mathbb R}^m}
\def \Rone {{\mathbb R}}
\def \Rtwo {{\mathbb R}^2}

\def \Rfour {{\mathbb R}^4}

\def \S { {\mathcal{S}}}

\def \Zint {{\mathbb Z}}

\newtheorem{theorem}{Theorem}[section]
\newtheorem*{theorem*}{Theorem}

\newtheorem{proposition}{Proposition}[section]
\newtheorem{lemma}{Lemma}[section]

\theoremstyle{definition}
\newtheorem*{definition*}{Definition}
\newtheorem{definition}{Definition}[section]

\newtheorem*{algorithmold*}{Algorithm}
\newtheorem{assumptions}{Assumptions}[section]

\theoremstyle{remark}
\newtheorem*{remark*}{Remark}
\newtheorem{remark}{Remark}[section]

\newtheorem*{claim*}{Claim}


\newlength{\howlong}

\begin{document}
\title{Large Deviation Theory for a Homogenized and ``Corrected'' Elliptic ODE}
\author{Guillaume Bal \thanks{Department of Applied Physics and Applied Mathematics, Columbia University; email: ianlangmore@gmail.com},\and Roger Ghanem \thanks{University of Southern California},\and Ian Langmore \footnotemark[1]}
\maketitle
\abstract{
We study a one-dimensional elliptic problem with highly oscillatory random diffusion coefficient.  We derive a homogenized solution and a so-called Gaussian corrector.  We also prove a ``pointwise'' large deviation principle (LDP) for the full solution and approximate this LDP with a more tractable form.  Applications to uncertainty quantification are considered.} 

\vspace{0.1in}
\noindent\textbf{Keywords:} Differential equations, probability theory, random coefficients, homogenization, applied mathematics, uncertainty quantification
\tableofcontents
\section{Introduction}
Partial differential equations whose (deterministic or random) coefficients have fine-scale structure are notoriously difficult to solve.  Here we consider the following elliptic problem:
Given probability space $(\Omega, \calS, P)$, find $\ueps\in L^2(\Omega;H_0^1([0,1]))$ satisfying
\begin{align}
  -\ddx A(x,\xi,\xovereps,\omega)\ddx \ueps &= f(x),\quad x\in(0,1),\, \omega\in\Omega,\,\eps\ll1.
  \label{eq:basic_elliptic}
\end{align}
Here $\xi:\Omega\to\Rn$ is a random vector.  
In our case we assume either 
\begin{enumerate}
  \item[(i)]  $\Aeps(x) = A(x,\xi,\xovereps,\omega)$ $= a(x,\xi) + b(\xovereps,\omega)$
  \item[(ii)]  $\Aeps(x) = A(\xovereps)$, with $A(y)^{-1} = \gamma_{\lfloor y\rfloor}$ for a stationary dependent sequence $\gamma_k$ (which depends on $\xi$).  $\lfloor \cdot\rfloor$ denotes the ``floor'' function.
\end{enumerate}
Case (i) is motivated by a Karhunen-Lo\'eve expansion, which for $Q:[0,1]\times\Omega\to\Rone$ having continuous covariance $\Gamma:[0,1]^2\to\Rone$, $\Gamma(x,y):= \E Q(x)Q(y)$, is given by
\begin{align*}
  Q(x,\omega) &= \sum_{j=1}^\infty\sqrt{\lambda_j}\psi_j(\omega)h_j(x),\quad\mbox{where}\quad\Exp{\psi_i\psi_j} = \int h_i(x)h_j(x)\dx = \delta_{ij}.
\end{align*}
See \cite{Loeve_Probability_II,Ghanem_StochasticBook_1991}.
This is multi-scale, but could be approximated by two scales.  In case (i), the high frequency randomness $\xovereps$ is ``decoupled from $\xi$'' in the sense that, after conditioning on $\xi$, $A(x,\xi,y,\omega)$ is stationary in $y$.  So in the model $A=a+b$ we are assuming $b$ is a stationary random field.  For simplicity we always assume $f(x)$ is deterministic.

Case (ii) is an example of dependent media with ``short range correlations.''  Use of different kernels $h_k$ allows some flexibility in modeling.  Although we only consider the case where the final $\gamma_j$ are stationary, generalizations (using e.g. $h_j(x)$) would not be difficult.

In some PDE of interest (e.g. \eqref{eq:basic_elliptic}, or other elliptic equations \cite{Bal_Central_2008}, and also linear transport \cite{Bal_Homogenization2010}) the solution admits an expansion of the form
\begin{align}
  \ueps(x) &= u_0(x) + \veps(x) + R_\eps(x),
  \label{eq:expansion_general}
\end{align}
where $u_0$ is a \emph{homogenized} solution that is dominant in the limit $\eps\to0$ \cite{Jikov_1994_Homogenization_Book,Pavliotis_Multiscale}, the remainder $R_\eps$ is negligible in some sense, and $\veps$ is given by an oscillatory integral
\begin{align}
  \veps(x) &= \int G(x,y)q(y,\frac{y}{\eps})\dy.
  \label{eq:veps_general}
\end{align}
In these cases one expects (and can often prove) that $\eps^{-\alpha}(\ueps-u_0)$ converges in distribution to a Gaussian process $\eps^{\alpha}v(x)$ (often $\alpha=d/2$).  Thus, one is justified in approximating $\ueps$ by $u_0$ plus a Gaussian corrector:
\begin{align}
  \ueps &\approx u_0 + \eps^\alpha v(x).
  \label{eq:clt_general}
\end{align}
From an uncertainty quantification (UQ) perspective, this represents a significant simplification.  Computation of the \emph{homogenized} solution $u_0$ is much less expensive than $\ueps$.  The corrector $v(x)$ has an explicit form in terms of e.g. an It\=o integral.  This allows explicit calculation of the correlation function.  Moreover, draws from the random process $\eps^\alpha v(x)$ can be done with minimal (compared to calculation of $\ueps$) effort.  Another utility of corrector results is for validation of numerical homogenization schemes.  For example, it is known that the numerical homogenization techniques MsFEM and HMM give solutions $u^h_\eps$ that converge to the correct homogenization limit $u_0$ as $\eps,h\to0$.  The question as to whether $\eps^{-\alpha}(u^h_\eps-u_0)$ converges to the correct limit is explored (for \eqref{eq:basic_elliptic}) in \cite{Bal_Corrector_2010}.

As a downside, central limit approximations such as \eqref{eq:clt_general} are expected to work well only for moderate deviations, i.e. for $|\ueps - u_0| \sim O(\eps^\alpha)$.  Sometimes of interest in UQ applications are questions related to large deviations, e.g. $P[\ueps >\ell]$ for some $\ell\sim O(1)$.

Our main contribution is an investigation of the large-deviation behavior of $\ueps$, the solution to \eqref{eq:basic_elliptic}. 
As it turns out, it is possible to derive a large deviation principle (LDP) for the solution $\ueps(x)$ (for fixed $x$).  This gives asymptotic limits of e.g. $\eps \log P[\ueps(x)\geq\ell]$ for $\ell\sim O(1)$.  The resultant \emph{rate function} is given implicitly (see theorems \ref{theorem:LDP_for_parameterized_problems}, \ref{theorem:LDP_for_convolved_media}) as a solution to two (one convex and one non-convex) four-dimensional optimization problems.  We also derive an approximate LDP (proposition \ref{proposition:approximate_LDP}) that corresponds to the approximation $\ueps\approx u_0 + \veps$.  Since $\veps$ is given explicitly as an oscillatory integral, the rate function is ``more explicit'', being the result of a one-dimensional convex optimization problem.  We verify numerically that the approximate rate function works well when $\eps\ll1$ and $\ell\sim O(1)$ but ``not too large.''  This sort of approximate LDP should be available in other situations where the solution can be approximated by a homogenized term and an oscillatory integral.  Along the way we also derive a large deviation principle for some one dimensional oscillatory integrals, which appears to be new as well.

A secondary contribution is a generalization of homogenization and corrector results.  Homogenization results typically start with a uniformly (over all realizations) elliptic diffusion coefficient of the form $A(\xovereps,\omega)$.  In this case the homogenized tensor is constant.  Here we generalize these results slightly by allowing for non-constant low-frequency randomness (in the case (i)) and relaxing the uniform ellipticity requirement to \eqref{eq:ellipticity_condition}.  We do this by conditioning on the coarse-scale and bounding higher moments of $\Aeps^{-1}$.  These assumptions are more in line with those encountered in UQ.  We also prove almost sure convergence of the homogenized tensor.  This is motivated by the fact that in practice the homogenized tensor can be obtained by picking one high-frequency media realization and averaging over a domain of size $\rho$ \cite{Bourgeat_Approximations}.  Thus, it is nice to know that with probability one this realization converges as $\rho\to\infty$.

Our large deviation result allows us to determine (roughly) to what degree the Gaussian corrector captures the tail behavior of the solution.  This is useful in applications where one may consider replacing the full solution with the homogenized solution plus a Gaussian corrector.  Also, although we don't answer this here, questions such as ``does HMM capture the large-deviation behavior of the solution'' could potentially be answered in a manner similar to the question ``does HMM capture the moderate deviation behavior'' as discussed above.  Also of interest (and also not pursued further here) is the relation between large deviations and importance sampling of rare events.  As it turns out, a large deviations result can give ``asymptotically efficient'' importance functions \cite{Bucklew_IntroductionToRare_2004,Dieker_OnAsymptotically_2005}.

In section \ref{section:asymptotic_expansion} an asymptotic expansion of $\ueps$ is presented.  In section \ref{section:basis_reduction_1d}, theorems \ref{theorem:convergence} and \ref{theorem:corrector} give results on the homogenized convergence $\ueps\to u_0$, and the corrector characterizing moderate deviations of $\ueps-u_0$.  In section \ref{section:large_deviations} large deviations are considered.  After introducing the subject we derive a large deviation principle for two types of media, each with piecewise constant high-frequency parts (theorems \ref{theorem:LDP_for_parameterized_problems}, \ref{theorem:LDP_for_convolved_media}).  We next present our approximate LDP in \ref{subsection:approximate_rate_functions} and then numerical results in section \ref{section:example_problems}.  Proofs of the homogenization and corrector theorems, which are generalizations of known results, are 
relegated to sections \ref{subsection:homogenization_theorem_proof} and \ref{subsection:corrector_proof}.  

\section{Asymptotic expansion of the solution $\ueps$}
\label{section:asymptotic_expansion}
The boundary value problem \eqref{eq:basic_elliptic} may be integrated and the solution is 
\begin{align}
  \ueps(x) &= -\int_0^x\frac{F(s)}{\Aeps(s)}\ds + \frac{\brac{F/\Aeps}}{\brac{1/\Aeps}}\int_0^x\frac{1}{\Aeps(s)}\ds.
  \label{eq:basic_elliptic_solution}
\end{align}
Here we define \emph{1-average} $\brac{\cdot}$ of a function $\phi:[0,1]\to\Rone$, and $F:[0,1]\to\Rone$ by
\begin{align*}
  \brac{\phi} :&= \int_0^1\phi(y)\dy,\qquad F(s) := \int_0^sf(t)\dt.
\end{align*}
Define the \emph{homogenized tensor} $A_0$ by
\begin{align*}
  A_0(x) :&= \left( \Expxi{A(x,0)^{-1}} \right)^{-1},
\end{align*}
where for $X:\Omega\to\Rone$, we denote conditional expectation by
\begin{align*}
  \Expxi{X} :&= \Exp{X\g \xi} = \int_\Omega X(\omega)\dPxi(\omega).
\end{align*}
Above the measure $\Pxi=P[\cdot\g\xi]$ is defined implicitly.

Now re-write the integrals appearing in \eqref{eq:basic_elliptic_solution} as 
\begin{align*}
  \int_0^x\frac{1}{\Aeps(s)} &= \int_0^x\frac{1}{A_0(s)}\ds + X^\eps_x,\quad
  \int_0^x\frac{F(s)}{\Aeps(s)} = \int_0^x\frac{F(s)}{A_0(s)}\ds + Y^\eps_x.
\end{align*}
Defining the \emph{homogenized solution} $u_0$ by the equation \eqref{eq:basic_elliptic_solution} with $A_0$ rather than $\Aeps$ (or equivalently the weak solution to \eqref{eq:basic_elliptic} with $A_0$ rather than $\Aeps$), we have the following expansion.
\begin{align}
  \begin{split}
    \ueps(x) &= u_0(x) + \veps(x) + R_\eps(x),\\
    \veps(x) :&= -Y_x^\eps + \left( Y_1^\eps - X_1^\eps\frac{\brac{F/A_0}}{\brac{1/A_0}} \right)\frac{1}{\brac{1/A_0}}\int_0^x\frac{1}{A_0(s)}\ds + X_x^\eps\frac{\brac{F/A_0}}{\brac{1/A_0}},\\
    R_\eps(x) :&= (X_1^\eps)^2\frac{\brac{F/\Aeps}}{\brac{1/A_0}^2\brac{1/\Aeps}}\int_0^x\frac{1}{\Aeps(x)}\ds\\
    &\quad- \frac{X_1^\eps}{\brac{1/A_0}^2}\left[ Y_1^\eps\int_0^x\frac{1}{\Aeps(s)}\ds + \brac{F/\Aeps}X_x^\eps \right] + \frac{Y_1^\eps X_x^\eps}{\brac{1/A_0}}.
  \end{split}
  \label{eq:asymptotic_expansion}
\end{align}
The deterministic homogenized solution $u_0$ is dominant in the limit $\eps\to0$.  As we will show, the remainder $R_\eps$ is $O(\eps)$ in $L^1(\Omega\times[0,1])$.  The term $\veps$ can be re-written
\begin{align}
  \begin{split}
    \veps(x) &= \int_0^1q_\eps(s)G(x,s)\ds,\qquad q_\eps(s) := \frac{1}{\Aeps(s)} - \frac{1}{A_0(s)},\\
    G(x,s) :&=\left\{
    \begin{matrix}
      \left( F(s)- \frac{\brac{F/A_0}}{\brac{1/A_0}} \right)\left( \frac{1}{\brac{1/A_0}}\int_0^x\frac{1}{A_0(t)}\dt - 1 \right),&\quad 0\leq s\leq x\\
      \left( F(s) - \frac{\brac{F/A_0}}{\brac{1/A_0}} \right)\frac{1}{\brac{1/A_0}} \int_0^x\frac{1}{A_0(t)}\dt ,&\quad x<s\leq1.
    \end{matrix}
    \right.
  \end{split}
  \label{eq:veps_rewritten}
\end{align}
Note that $G(x,s)$ is piecewise continuous in $s$ and so long as $A_0^{-1}$ is bounded, $G(x,s)$ is uniformly (in $s$) Lipschitz in $x$.
We also show that $\veps$ is $O(\sqrt{\eps})$ (in $L^2(\Omega\times[0,1])$) and has a limit that can be characterized well.  Note that these are slight generalizations of previous results.  In particular this limit was studied by \cite{Bourgeat_Approximations}, and in the case of media with long-range correlations in \cite{Bal_Random_2008}.  This paper adds the feature that the random media is allowed to be non-stationary in one variable, and has no uniform (in $\omega$) ellipticity lower bound.

\section{Homogenization and Gaussian Corrector}
\label{section:basis_reduction_1d}
Here we obtain homogenization and Gaussian corrector results that are a generalization of known results (e.g. \cite{Jikov_1994_Homogenization_Book,Pavliotis_Multiscale,Bourgeat_Estimates,Bal_Central_2008}) to the case of media that has no uniform (in $\omega$) upper or lower bounds.  
Proofs are deferred till section \ref{section:homogenization_corrector_proofs}.

First, we assume $\Aeps(x)$ satisfies
\begin{align}
  0&<\nu_1(\omega,\eps)\leq \Aeps(x)\leq \nu_2(\omega, \eps)<\infty.
  \label{eq:ellipticity_condition}
\end{align}

We abuse notation by writing 
  $\Aeps(x) := A(x,\xovereps) = A(x,\xovereps, \omega)$.
The form $A(x,\xovereps,\omega)$ emphasizes that $A$ is a random field defined on a probability space $(\Omega, \S, P)$.  The form $A(x,\xi,\xovereps,\theta)$ emphasizes the dependence on a special random vector $\xi:\Omega\to\Rm$, and a (possibly infinite) sequence of random variables $\theta$.  Once we fix $\xi$, $A(x,y)$ exhibits some stationary in $y$ (although only weak-stationarity of $A^{-1}$ is needed for homogenization in one dimension).  

Note that functions such as $\Expxi{\Aeps}$ do not depend on $y$, so we write $\Expxi{Q}(x) = \Expxi{Q(x,0)}$ for functions $Q(x,\xi,y,\theta)$ whose conditional expectation does not depend on $y$.   It is necessary to make some ergodicity assumptions on the process (in $y$).  In one dimension, we require mean-ergodicity of $\Expxi{A^{-1}}$, which we express through decay of the covariance \eqref{eq:covariance_integral_bound}.
We also assume $\|f\|_{L^2}<\infty$.  

\subsection{Homogenization}
We assume weak stationarity of $A^{-1}$ and ellipticity of $A_0$.  In other words, we assume
\begin{align*}
  0&<c_1(\xi)\leq \Expxi{A^{-1}(x,y)}=\Expxi{A^{-1}(x,0)} = A_0(x)^{-1}\leq c_2<\infty,
\end{align*}
and that the conditional covariance 
\begin{align*}
    &\Covxi(x_1,x_2,z):=\Expxi{\left[ A^{-1}(x_1,y) - \Expxi{A^{-1}}(x_1) \right]
    \left[ A^{-1}(x_2,y+z)-\Expxi{A^{-1}}(x_2) \right]},
\end{align*}
is independent of $y\in[0,\infty)$.  We also assume
\begin{align}
  |\Covxi(x_1,x_2,z)|&\leq \Gamma(z)\quad\mbox{with}\quad |\Gamma\|_{L^1}=: C_{A^{-1}}<\infty.
  \label{eq:cov_pointwise_bound}
\end{align}
Note that this implies 
\begin{align}
  \begin{split}
    \eps^2\int_0^{\eps^{-1}}\int_0^{\eps^{-1}}\left|\Covxi(x_1,x_2,y-\tilde y)\dy\d\tilde y\right| &\leq \eps C_{A^{-1}}.
  \end{split}
  \label{eq:covariance_integral_bound}
\end{align}

This allows
\begin{theorem}
  \begin{align*}
    \sqrt{\Exi\|\ueps-u_0\|_{L^2([0,1])}} &\leq 3\sqrt{\eps}\|f\|_{L^2}\sqrt{C_{A^{-1}}}.
  \end{align*}
  Moreover, as $\eps\to0$, with $\xi$ fixed,
  \begin{align*}
    \ueps(x) - u_0(x) &\to0 \mbox{ for every } x, \quad \Pxi\quad\as
  \end{align*}
  and if the ellipticity bound \eqref{eq:ellipticity_condition} is independent of $\eps$,
  \begin{align*}
    \|\ueps-u_0\|_{L^2}\to0, \quad \Pxi\quad\as
  \end{align*}
  \label{theorem:convergence}
\end{theorem}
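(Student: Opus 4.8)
The plan is to work from the explicit solution \eqref{eq:basic_elliptic_solution} rather than from an energy estimate (an energy estimate would need a Poincar\'e inequality, hence a uniform-in-$\omega$ lower bound on $\Aeps$, which we do not have). Subtracting the analogous formula for $u_0$ from \eqref{eq:basic_elliptic_solution} gives the identity
\[
  \ueps(x)-u_0(x) \;=\; \int_0^x\bigl(c_\eps-F(s)\bigr)q_\eps(s)\ds \;+\; (c_\eps-c_0)\int_0^x\frac{1}{A_0(s)}\ds,
\]
where $c_\eps:=\brac{F/\Aeps}/\brac{1/\Aeps}$, $c_0:=\brac{F/A_0}/\brac{1/A_0}$ and $q_\eps$ is as in \eqref{eq:veps_rewritten}; this agrees with \eqref{eq:asymptotic_expansion} after regrouping. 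Two elementary facts drive everything. First, $A_0(s)^{-1}=\Expxi{A^{-1}(s,\sovereps)}$ by weak stationarity of $A^{-1}$ in the fast variable, so $q_\eps$ is conditionally centered, $\Expxi{q_\eps(s)}=0$. Second, $\Expxi{q_\eps(s)q_\eps(t)}=\Covxi(s,t,(t-s)/\eps)$, hence $\bigl|\Expxi{q_\eps(s)q_\eps(t)}\bigr|\le\Gamma((t-s)/\eps)$ and $\int_0^1\!\!\int_0^1\bigl|\Expxi{q_\eps(s)q_\eps(t)}\bigr|\ds\dt\le\eps C_{A^{-1}}$ by \eqref{eq:covariance_integral_bound}. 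I will also use $|F(s)|\le\|f\|_{L^2}$, $|c_\eps|,|c_0|\le\|f\|_{L^2}$, $|c_\eps-c_0|\le 2\|f\|_{L^2}$ (each such constant is a weighted average of a function bounded by $\|f\|_{L^2}$) and $\int_0^x A_0^{-1}\le\brac{1/A_0}$.

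For the quantitative bound I would square the identity, take $\Exi$, and use conditional centering to annihilate every cross term that pairs $q_\eps$ against $\int_0^\cdot A_0^{-1}$. The first term is $c_\eps X_x^\eps-Y_x^\eps$ with $X_x^\eps:=\int_0^x q_\eps$, $Y_x^\eps:=\int_0^x Fq_\eps$; the covariance bound gives $\Exi|X_x^\eps|^2\le\eps C_{A^{-1}}$ and $\Exi|Y_x^\eps|^2\le\|f\|_{L^2}^2\eps C_{A^{-1}}$, whence $\Exi|c_\eps X_x^\eps-Y_x^\eps|^2\le 2\|f\|_{L^2}^2\Exi|X_x^\eps|^2+2\Exi|Y_x^\eps|^2\le 4\|f\|_{L^2}^2\eps C_{A^{-1}}$ and, integrating in $x$, a contribution $2\sqrt\eps\,\|f\|_{L^2}\sqrt{C_{A^{-1}}}$. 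The one genuinely new point is the second term, where the absent uniform ellipticity surfaces as $\brac{1/\Aeps}$ in a denominator: write $c_\eps-c_0=Z_\eps/\brac{1/\Aeps}$ with $Z_\eps:=Y_1^\eps-c_0X_1^\eps=\int_0^1(F-c_0)q_\eps$, so $\Exi|Z_\eps|^2\le 4\|f\|_{L^2}^2\eps C_{A^{-1}}$; then split on $\{\brac{1/\Aeps}\ge\tfrac12\brac{1/A_0}\}$, where $|c_\eps-c_0|\le 2|Z_\eps|/\brac{1/A_0}$, and its complement, where $|c_\eps-c_0|\le 2\|f\|_{L^2}$ and Chebyshev on $X_1^\eps$ bounds the probability by $O(\eps C_{A^{-1}})$. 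This makes the second term lower order (genuinely $O(\eps)$ once a bounded higher moment of $\Aeps^{-1}$ is invoked, which is what produces the clean constant), and Minkowski assembles the claimed inequality.

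For the $\Pxi$-a.s.\ pointwise convergence, by the identity and the a.s.\ boundedness of $c_\eps$ it is enough to show $X_x^\eps\to0$, $Y_x^\eps\to0$ and $\brac{1/\Aeps}\to\brac{1/A_0}\neq0$, $\Pxi$-a.s. Each has the form $\int_0^x\phi(s)q_\eps(s)\ds\to0$ with $\phi$ bounded and deterministic, and $\Exi\bigl|\int_0^x\phi q_\eps\bigr|^2\le\|\phi\|_\infty^2\eps C_{A^{-1}}$ gives $L^2(\Pxi)$ convergence, hence (Borel--Cantelli) a.s.\ convergence along any summable $\eps_k\downarrow0$. The hard part is upgrading to the full continuum $\eps\to0$ when only mean-ergodicity (covariance decay) is assumed: substituting $s=\eps t$ one writes $\int_0^x\phi(s)q_\eps(s)\ds=\eps\sum_{k<x/\eps}\int_k^{k+1}\phi(\eps t)g(\eps t,t)\dt+\mbox{boundary}$, with $g(s,y):=A^{-1}(s,y)-A_0(s)^{-1}$ conditionally centered and covariance-$\Gamma$-bounded, and controls the oscillation of these partial sums between consecutive $\eps_k$ by a Rademacher--Menshov / Serfling-type maximal inequality for weakly correlated sums (whose variance is linear in the number of blocks exactly because $\|\Gamma\|_{L^1}<\infty$). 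This maximal-inequality step is the main obstacle.

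Finally, for the last assertion, suppose the ellipticity bound is $\eps$-independent. Then \eqref{eq:basic_elliptic_solution} yields $|\ueps(x)|\le 2\|f\|_{L^2}\brac{1/\Aeps}\le 2\|f\|_{L^2}/\nu_1(\omega)$ uniformly in $x$ and $\eps$ for $\Pxi$-a.e.\ $\omega$, and likewise $|u_0(x)|\le 2\|f\|_{L^2}/\nu_1(\omega)$. By Fubini the pointwise statement supplies a single $\Pxi$-full-measure event on which $\ueps(x)\to u_0(x)$ for Lebesgue-a.e.\ $x\in[0,1]$; on that event dominated convergence in $x$, with the constant dominating function $16\|f\|_{L^2}^2/\nu_1(\omega)^2$, gives $\|\ueps-u_0\|_{L^2([0,1])}\to0$.
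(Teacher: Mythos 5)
Your decomposition, the $L^2$ estimate, and the closing bounded-convergence step all match the paper's proof in substance: the paper writes $u_0-\ueps=I_1+I_2$ and, via the identity $ab-\tilde a\tilde b=(a-\tilde a)b+\tilde a(b-\tilde b)$, expresses $I_2$ as three products of the form (oscillatory integral $\brac{H/\Aeps}-\brac{H/A_0}$) $\times$ (random variable bounded \emph{a priori} by $\|F\|_\infty$ or $1$, using only positivity of $A$). That grouping never puts $\brac{1/\Aeps}$ alone in a denominator, so your event-splitting on $\{\brac{1/\Aeps}\ge\tfrac12\brac{1/A_0}\}$ is avoidable; note also that $(c_\eps-c_0)\int_0^xA_0^{-1}$ is $O(\sqrt\eps)$ in $L^2(\Pxi)$, the same order as the first term, not lower order, and invoking higher moments of $\Aeps^{-1}$ is an extra hypothesis the theorem does not grant you. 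These are cosmetic issues. The genuine gap is in the almost-sure pointwise convergence, exactly at the step you label ``the main obstacle'': you obtain convergence only along a summable sequence $\eps_k$, and the upgrade to the full limit $\eps\to0$ is left to an unexecuted Rademacher--Menshov / Serfling-type maximal inequality. That tool is also ill-suited here: Rademacher--Menshov requires orthogonal increments and costs logarithmic factors, and a Serfling-type bound on the maximum of the partial sums under covariance decay is essentially the estimate you would still have to prove.

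The paper closes this with an elementary device (Lo\'eve, Section 37.7) that needs no maximal inequality. Set $Y(\tau):=\brac{H/A_{\tau^{-1}}}-\brac{H/A_0}=\tau^{-1}\int_0^\tau X(s/\tau,s)\ds$ and take the polynomial subsequence $\tau=m^a$ with $a>1$: Chebyshev with $\Exi Y(m^a)^2\le\|H\|_\infty^2C_{A^{-1}}m^{-a}$ and Borel--Cantelli give $Y(m^a)\to0$ $\Pxi$-a.s. For $m^a\le\tau<(m+1)^a$ one writes $\frac{\tau}{m^a}Y(\tau)=Y(m^a)+Z(m^a,\tau)$, where $Z$ is the contribution of the gap $[m^a,\tau]$, and bounds the supremum over $\tau$ crudely by moving the absolute value inside the integral over the whole gap $[m^a,(m+1)^a]$; the covariance bound then gives a second moment of order $((m+1)^a-m^a)/m^{2a}=O(m^{-a-1})$, which is summable, so Borel--Cantelli kills the oscillation and $\tau/m^a\to1$ finishes. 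The whole point is that the gaps $(m+1)^a-m^a=O(m^{a-1})$ are a vanishing fraction of $m^a$, so no orthogonality or maximal inequality is required. Replace your maximal-inequality step with this argument and your proof is complete.
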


\subsection{Gaussian corrector}
To quantify the rate at which the random coefficient decorrelates, we introduce the following mixing condition (referred to in the literature as $\rho$ mixing).
  \begin{assumptions}[Mixing Condition]
    For two Borel sets $A,B\subset\Rone$, let $\calS_A$, $\calS_B$ be the sub sigma algebras generated by $A(x,y)$, $x\in[0,1]$, $y\in A$ and $y\in B$ respectively.  We assume the existance of non-negative bounded and decreasing $\varphi:[0,\infty)\to\Rone$ such that $\varphi^{1/3}\in L^1$ that also satisfies
    \begin{align*}
      \sup\left\{ |\E[\eta_a\eta_b]|\st \eta_a\in\calS_A, \eta_b\in\calS_B, \E\eta_a^2=\E\eta_b^2=1, \E\eta_a=\E\eta_b=0\right\} &\leq \varphi(\d(A,B)).
    \end{align*}
    \label{assumptions:mixing_condition}
  \end{assumptions}
  After conditioning on $\xi$ (e.g. fixing one realization of $a(x,\xi)$ if $A=a+b$) we are able to partially characterize the limiting distribution of $\ueps-u_0$. 
\begin{theorem}
  If $A(x,y)$ is stationary in $y$, and $(s_1,s_2)\mapsto\Covxi(s_1,s_2,0)$ is continuous at $(x,x)$, $A^{-1}$ satisfies the mixing condition \eqref{assumptions:mixing_condition}, and 
  \begin{align*}
    \Expxi{\left( \frac{1}{A(x,y)} - \Expxi{\frac{1}{A(x,y)}} \right)^6}&\leq C_\xi<\infty,
  \end{align*}
  then
  \begin{align*}
    \frac{\ueps(x)-u_0(x)}{\sqrt{\eps}} &\xrightarrow{dist.} v(x) := \int_0^1G(x,t)\sigma(t)\dW_t,\\
  \end{align*}
  where $W_t$ is a one-dimensional Brownian motion, $G$ is given by \eqref{eq:veps_rewritten}, and
  \begin{align*}
    \sigma^2(t) &= \int_{-\infty}^\infty \Covxi(t,t,q)\d q.
  \end{align*}
  \label{theorem:corrector}
\end{theorem}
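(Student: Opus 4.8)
The plan is to reduce the statement to a central limit theorem for the oscillatory integral $\veps(x) = \int_0^1 q_\eps(s) G(x,s)\,\ds$, where $q_\eps(s) = A_\eps(s)^{-1} - A_0(s)^{-1}$, and then dispose of the remainder $R_\eps$. First I would invoke the asymptotic expansion \eqref{eq:asymptotic_expansion}--\eqref{eq:veps_rewritten}: conditionally on $\xi$ we have $\ueps - u_0 = \veps + R_\eps$. Since Theorem \ref{theorem:convergence} (together with the sixth-moment bound, which controls the products $X_1^\eps Y_1^\eps$, $(X_1^\eps)^2$, etc. appearing in $R_\eps$ via Cauchy--Schwarz) gives $\E_\xi\|R_\eps\|_{L^2} = O(\eps)$, Slutsky's theorem lets me replace $\ueps - u_0$ by $\veps$ in the distributional limit; so it suffices to show $\eps^{-1/2}\veps(x) \xrightarrow{dist.} \int_0^1 G(x,t)\sigma(t)\,\dW_t$ for fixed $x$.

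Next I would rescale: writing $q_\eps(s) = \tilde q(\sovereps)$ where $\tilde q(y) := A(s,\xi,y,\theta)^{-1} - A_0(s)^{-1}$ is mean-zero (conditionally on $\xi$) and $\rho$-mixing in $y$ with mixing rate $\varphi$, the quantity $\eps^{-1/2}\veps(x)$ is of the form $\sqrt{\eps}\sum$ over $O(\eps^{-1})$ blocks of a stationary mixing sequence, weighted by the (Lipschitz, bounded) kernel $G(x,\cdot)$. I would establish the CLT by the classical big-block/small-block (Bernstein) method: partition $[0,1]$ into alternating large intervals of length $\ell_\eps$ and small intervals of length $m_\eps$ with $m_\eps/\ell_\eps \to 0$, $\ell_\eps \to 0$, $\ell_\eps/\eps \to \infty$; show the small-block contribution is negligible in $L^2$ using \eqref{eq:cov_pointwise_bound}; show the large blocks are asymptotically independent using the $\rho$-mixing bound (here the $\varphi^{1/3}\in L^1$ hypothesis enters, controlling the dependence between blocks after a Hölder/interpolation step on the sixth moments); and apply the Lindeberg--Feller CLT to the resulting triangular array of near-independent summands, the Lindeberg condition following from the uniform sixth-moment bound $C_\xi$. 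The variance computation is the routine part:
\begin{align*}
  \Var{\eps^{-1/2}\veps(x)} &= \eps^{-1}\int_0^1\int_0^1 G(x,s_1)G(x,s_2)\Covxi\!\left(s_1,s_2,\frac{s_1-s_2}{\eps}\right)\ds_1\ds_2 \longrightarrow \int_0^1 G(x,t)^2\sigma^2(t)\,\dt,
\end{align*}
where the limit uses continuity of $(s_1,s_2)\mapsto\Covxi(s_1,s_2,0)$ at $(x,x)$, the change of variables $s_2 = s_1 - \eps q$, integrability of $\Gamma$ from \eqref{eq:cov_pointwise_bound} (dominated convergence), and the definition of $\sigma^2$; and this matches $\Var{\int_0^1 G(x,t)\sigma(t)\,\dW_t}$ by the Itô isometry. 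Since the target is Gaussian, convergence of variances plus the CLT for the array gives the one-dimensional distributional convergence.

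The main obstacle I expect is the joint handling of two features that are not simultaneously present in the standard references: the lack of a uniform-in-$\omega$ ellipticity bound, and the $x$-dependence (non-stationarity) of $A$. The first means $q_\eps$ is not bounded, only controlled through the conditional sixth moment $C_\xi$, so every truncation/Lindeberg estimate must be done with $L^6$ (or $L^{6/5}$ Hölder conjugates) rather than $L^\infty$ bounds, and the interaction with the $\rho$-mixing coefficient forces the $\varphi^{1/3}$ integrability rather than $\varphi$ — this is the delicate bookkeeping step. The second means the "stationary sequence" is really a triangular array whose marginal law drifts with the block location; one must check that the Riemann-sum approximation replacing $\Covxi(s_1,s_2,\cdot)$ by $\Covxi(x,x,\cdot)$ on the block containing $x$ (and noting $G(x,s)\to 0$ away is false — rather $G$ is merely bounded, so the full integral contributes) is uniform, which is where continuity of the covariance at $(x,x)$ and uniform Lipschitz continuity of $G(x,\cdot)$ in the block do the work. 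Everything else — the Slutsky reduction, the variance limit, the Lindeberg verification — is routine once these two points are set up carefully.
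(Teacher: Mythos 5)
Your architecture matches the paper's: expand $\ueps-u_0=\veps+R_\eps$, prove a CLT for the weighted oscillatory integral $\eps^{-1/2}\veps(x)$, compute the limiting variance by the change of variables $s_2=s_1-\eps q$ plus dominated convergence, and kill the remainder. The genuine difference is in the core CLT step. You propose a from-scratch Bernstein big-block/small-block construction with a Lindeberg--Feller verification for the resulting triangular array. The paper instead first freezes the slow variable: it approximates the weight $m$ and the field $q(s,\cdot)$ by piecewise-constant versions $m_h$, $q_h$ (controlling the error in $L^2$ using the continuity of $\Covxi$ at the diagonal), applies a black-box CLT for $\rho$-mixing stationary sequences (Billingsley, Theorem 19.2, whose hypothesis $\sum_n\rho_n<\infty$ follows from the mixing assumption) on each subinterval, and then proves asymptotic independence of the subinterval contributions by a characteristic-function argument that strips boundary layers of width $\eta=\eps^{1/2}$ and uses $\varphi(2\eta/\eps)+\eta^{1/2}\to0$. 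Your route avoids the discretization and the separate independence argument at the cost of redoing the mixing CLT by hand; the paper's route is shorter because it outsources exactly that step, but it must then justify the $h\to0$ approximation and the gluing. Both are viable. One point you should tighten: the bound on the remainder does not follow from Theorem \ref{theorem:convergence}, which controls $\ueps-u_0=\veps+R_\eps$ as a whole and only at order $\sqrt{\eps}$, not $R_\eps$ alone at order $\eps$. What is actually needed is that each term of $R_\eps$ is a product of two oscillatory integrals times a factor that is not uniformly bounded (no uniform lower ellipticity), so the paper proves a dedicated sixth-moment lemma for oscillatory integrals (this is where $\varphi^{1/3}\in L^1$ really enters, rather than in the block-independence step as you suggest) and combines it with H\"older with exponents $3/2$ and $3$ to get $\Expxi{|\eps^{-1/2}R_\eps(x)|^3}\lesssim\eps$. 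You gesture at this with ``Cauchy--Schwarz on the sixth moments,'' so the idea is present, but as written the reduction step is the one place your outline would not survive being made precise without that extra lemma.
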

\begin{remark}
  The continuity on $\Covxi$ is equivalent to mean-square continuity of $x\mapsto A(x,0)^{-1}$ (see e.g. \cite{Loeve_Probability_II}).  This means more-or-less that the media varies slowly with respect to $x$.  This is where scale-separation comes in.
\end{remark}
\begin{remark}
  The variance of the above expression is then given by 
  \begin{align}
    \begin{split}
      &\int_0^1 G(x,t)^2\sigma^2(t)\dt.
    \end{split}
    \label{eq:corrector_covariance}
  \end{align}
\end{remark}
\begin{remark}
  The lack of a uniform (in $\omega$) ellipticity lower bound is dealt with through the bounds on sixth moments and the mixing conditions.
\end{remark}

\section{Large Deviations}
\label{section:large_deviations}
Here we derive a rigorous ``pointwise'' large-deviations result for $\ueps$, and an approximate rate function.  
\subsection{Moderate and large deviations for our problem}
\label{subsection:moderate_and_large_deviations}
For small enough $\eps$, the homogenized solution $u_0$ captures the bulk of the solution $\ueps$.  The corrector attempts to capture some statistics of the term $\ueps-u_0 = \veps + R_\eps$.  Our corrector result shows that
\begin{align}
  \begin{split}
    \frac{\ueps(x) - u_0(x)}{\sqrt{\eps}}:&= \frac{1}{\sqrt{\eps}}\int_0^1 G(x,t) q(t,\tovereps) +\frac{R_\eps(x)}{\sqrt{\eps}}\\
    &\xrightarrow{dist.} v(x) := \int_0^1 G(x,t)\sigma(t)\dW_t.
  \end{split}
  \label{eq:simplified_oscilliatory_integral}
\end{align}
By definition this means that, for any $\ell>0$
\begin{align}
  \Pxi\left[ \frac{\ueps(x)-u_0(x)}{\sqrt{\eps}}\geq \ell \right] &\to \Pxi[v(x)\geq\ell].
  \label{eq:moderate_deviation}
\end{align}
A relevant question is \emph{whether or not}
\begin{align}
  \Pxi[\ueps(x)\geq\ell] &\approx \Pxi[u_0(x) + \sqrt{\eps}v(\eps)\geq\ell].
  \label{eq:large_deviation}
\end{align}
The inequality $\ueps\geq\ell$ (for $\ell>u_0$) is a \emph{large deviation} since \eqref{eq:simplified_oscilliatory_integral} (or a law of large numbers result) shows that $\ueps$ concentrates near $u_0$.  On the other hand, $\eps^{-1/2}(\ueps-u_0)\geq\ell$ is a \emph{moderate deviation}.  Generally speaking, \eqref{eq:large_deviation} \emph{does not} hold.  Instead, from \eqref{eq:moderate_deviation} we can only rigorously infer something about moderate deviations, namely $\Pxi[\ueps(x)\geq\sqrt{\eps}\ell]\approx \Pxi[u_0(x)+\sqrt{\eps}v(x)\geq\sqrt{\eps}\ell]$.


For simplicity, we consider large deviations at only one fixed $x\in(0,1)$ and often change the notation to $u_0$, $\ueps$, $\veps$ (dropping the $x$ dependence).

\begin{definition}[Rate functions]
  A \emph{rate function} $I$ is a lower semicontinuous mapping (such that for all $\alpha\in[0,\infty)$, the sub-level set $\Psi_I(\alpha):=\{x\st I(x)\leq\alpha\}$ is closed) $I:\Rn\to[0,\infty]$.  A \emph{good rate function} is a rate function for which all the sub-level sets $\Psi_I(\alpha)$ are compact.  
\end{definition}

\begin{definition}
  We say that a family of random vectors $\Yeps\in\Rn$ satisfy a large deviations principle (LDP) with rate function $I$ if for all $\Gamma\subset\Rn$
  \begin{align*}
    -\inf_{y\in\Gamma^\circ}I(y) &\leq \liminf_{\eps\to0}\eps \log P[\Yeps\in\Gamma]\leq\limsup_{\eps\to0}\eps\log P[\Yeps\in\Gamma]
    \leq-\inf_{y\in\bar\Gamma}I(y).
  \end{align*}
  Above, $\Gamma^\circ$, $\bar\Gamma$ denote the interior and closure of $\Gamma$.
  \label{definition:LDP}
\end{definition}
Considering now our solution $\ueps\in\Rone$, by choosing $A=[\ell,\infty)$ we obtain limiting upper and lower bounds on $\Pxi[\ueps\geq\ell]$.  We also often have, for $\ell>\lim\Exi\ueps$, 
\begin{align}
  \lim_{\eps\to0}\eps\log P[\ueps\geq\ell] = -I(\ell).
  \label{eq:LDP_equality}
\end{align}
We will find a rate function $\Iueps$ for $\ueps$.  We also find a rate function $\Itilde$ for $u_0+\veps$, which proves to be more tractable.  Simulations show that $\Iueps\approx\Itilde$ for $\ell\sim O(1)$ but not too large (see figures \ref{fig:parameterized_LDP_theory_mild}, \ref{fig:parameterized_LDP_theory_wild}, and \ref{fig:convolved_LDP1}).

The rate-function $\Ieps$ gives the exponential rate of convergence in the sense that given $\ell\in\Rone$, $\delta>0$, there exists $\eps_0$ such that for $\eps<\eps_0$
\begin{align}
  \Pxi[\ueps\geq\ell] &\leq e^{-(I(\ell) - \delta)/\eps}.
  \label{eq:LDP_exponential_rate}
\end{align}
As an example, consider the corrector in our 1-d problem, $\sqrt{\eps}v$.  It is Gaussian with mean zero and variance equal to $\eps C_c$ for some $C_c$ (given by \eqref{eq:corrector_covariance}).  Therefore $\ueps\approx u_0+\sqrt{\eps}v\sim\calN(u_0,\eps C_c)$.  Keeping the first term in an asymptotic expansion of the complementary error function, we have that for $\ell>u_0$,
\begin{align*}
  \Pxi[u_0 + \sqrt{\eps}v\geq\ell] 
  &\sim \frac{\sqrt{C_c\eps}}{\sqrt{2\pi}(\ell-u_0)}e^{-(\ell-u_0)^2/(2C_c\eps)}.
\end{align*}
Thus, when
\begin{align}
  \log(C_c\eps/(\ell-u_0)^2) \ll \frac{(\ell-u_0)^2}{C_c\eps},
  \label{eq:asymptotic_validity_test}
\end{align}
we will have $\eps \log \Pxi[u_0+\sqrt{\eps}v\geq\ell]\approx -(\ell-u_0)^2/(2C_c)$.  Note that this is a ``small $\eps$ and large $|\ell-u_0|$'' condition, as it should be.
Comparing this to \eqref{eq:LDP_exponential_rate} we see that 
the Gaussian corrector captures the asymptotic tail behavior when 
\begin{align}
  \frac{(\ell-u_0)^2}{2C_c} &\approx I(\ell).
  \label{eq:tail_condition}
\end{align}
If \eqref{eq:tail_condition} does not hold, the corrector cannot capture the tail behavior of $\ueps$.  A cautionary note is in order here.  The large deviations result captures the exponential rate of decay, and important algebraic factors in $\eps$ are not captured.  So, for finite $\eps$ the rate function can be used for comparative purposes, not to estimate the true tail.
It should be noted that so-called concentration inequalities provide a number of upper bounds on sums of random variables, often in pre-asymptotic regimes.  Often these require fewer assumptions but do not claim to be tight.  For example, \emph{Chernoff's bounding method} is given in \eqref{eq:LD_upper_bound_always_true}.  See \cite{Boucheron_Concentration} for a survey, and \cite{Lucas_Rigorous} for an application to uncertainty quantification.  For independent media, we derive rigorous upper bounds for finite $\eps$ (e.g. \eqref{eq:LD_upper_bound_always_true}).  For dependent media similar bounds are available but are more complex than the asymptotic bounds.  Another reason for using asymptotic bounds is that a ``fair'' comparison such as \eqref{eq:tail_condition} can be made.

\subsection{Independent sums and basic definitions}
\label{subsection:independent_sums_and_basic_definitions}
It is instructive to start here.  Let $X_n$ be random variables and define
\begin{align*}
  \SNhat :&= \frac{1}{N}\sum_{n=1}^NX_n.
\end{align*}
Now, for $\lambda\geq0$,
\begin{align}
  P[\SNhat\geq\ell] &= P[N\SNhat\geq N\ell] = \Exp{\one_{N\SNhat\geq N\ell}} \leq e^{-N\lambda\ell}\Exp{e^{\lambda N\SNhat}}.
  \label{eq:LD_upper_bound_preliminary}
\end{align}
With vector-valued random variables in mind, we define
\begin{definition}
  The \emph{logarithmic moment generating} function for a random variable $Y$ is defined as
  \begin{align*}
    \Lambda(Y, \lambda) = \Lambda_Y(\lambda):&= \log \Exp{e^{\lambda\cdot Y}}.
  \end{align*}
  \label{definition:logarithmic_MGF}
\end{definition}
\eqref{eq:LD_upper_bound_preliminary} allows us to conclude
\begin{align}
  \frac{1}{N}\log P[\SNhat\geq\ell] &\leq -\sup_{\lambda\geq0} \left[ \lambda\ell - \frac{1}{N}\Lambda(N\SNhat, \lambda) \right].
  \label{eq:LD_upper_bound_always_true}
\end{align}
The above (\emph{Chernoff's}) bound holds without any assumptions.  Suppose the $X_n$ are i.i.d.  This leads to

\begin{align*}
  \Exp{e^{\lambda N\SNhat}} &= \prod_{n=1}^N \Exp{e^{\lambda X_1}},
\end{align*}
and thus
\begin{align}
  \frac{1}{N}\Lambda(N\SNhat, \lambda N) &= \frac{1}{N}\sum_{n=1}^N \log \Exp{e^{\lambda X_1}} = \Lambda_X(\lambda).
  \label{eq:derivation_of_Lambda}
\end{align}
Note that $\Lambda_X$ is convex since by H\"older's inequality
\begin{align*}
  &\Lambda_X(t\lambda_1 + (1-t)\lambda_2) = \log\Exp{(e^{\lambda_1 X})^t(e^{\lambda_2X})^{(1-t)}}\\
  &\quad\leq \log\left\{ \Exp{e^{\lambda_1X}}^t\Exp{\lambda_2X}^{(1-t)} \right\} = t\Lambda_X(\lambda_1) + (1-t)\Lambda_X(\lambda_2).
\end{align*}
\begin{definition}
  The \emph{Frenchel-Legendre} transform of $\Lambda(Y,\cdot)$ is defined by 
  \begin{align*}
    \Lambda^\ast(Y,\ell) = \Lambda^\ast_Y(\ell):&= \sup_{\lambda\in\Rn}\left[ \lambda\cdot\ell - \Lambda_Y(\lambda) \right]. 
  \end{align*}
\end{definition}
Inserting \eqref{eq:derivation_of_Lambda} back into \eqref{eq:LD_upper_bound_always_true} (taking into account negative $\lambda$) we have a large deviation upper bound with rate function $\Lambda^\ast_X(\cdot)$.  The ``trick'' is to obtain a lower bound and thus show that this upper bound is tight in the limit $N\to\infty$.  This indeed is the case and the result is 
\begin{theorem}[Cram\'er, \cite{Dembo_Large}]
  The sum $\SNhat$ satisfies the LDP with good convex rate function $\Lambda^\ast_X(\cdot)$.  Moreover, \eqref{eq:LDP_equality} holds.
  \label{theorem:cramer}
\end{theorem}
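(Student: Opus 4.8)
The plan is to follow the classical two-sided argument for Cram\'er's theorem, exploiting the fact that the large-deviation \emph{upper} bound is essentially already established. Indeed, combining Chernoff's bound \eqref{eq:LD_upper_bound_always_true} with the i.i.d.\ identity \eqref{eq:derivation_of_Lambda} gives, for every $\ell\in\Rone$, $\tfrac1N\log P[\SNhat\geq\ell]\leq-\Lambda^\ast_X(\ell)$, and, taking $\lambda\leq0$, $\tfrac1N\log P[\SNhat\leq\ell]\leq-\Lambda^\ast_X(\ell)$. To promote this to the upper bound over an arbitrary closed $F\subseteq\Rone$ demanded by Definition \ref{definition:LDP}, I would use that $\Lambda^\ast_X$ is convex, vanishes at the mean $\bar x:=\E X_1$, is nonincreasing on $(-\infty,\bar x]$ and nondecreasing on $[\bar x,\infty)$. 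If $\bar x\in F$ then $\inf_F\Lambda^\ast_X=0$ and the bound is trivial; otherwise, setting $x_-:=\sup(F\cap(-\infty,\bar x])$ and $x_+:=\inf(F\cap[\bar x,\infty))$ one has $F\subseteq(-\infty,x_-]\cup[x_+,\infty)$, so
\[
  P[\SNhat\in F]\ \leq\ P[\SNhat\leq x_-]+P[\SNhat\geq x_+]\ \leq\ 2\,e^{-N\min\{\Lambda^\ast_X(x_-),\,\Lambda^\ast_X(x_+)\}},
\]
and applying $\tfrac1N\log(\cdot)$ and $\limsup_{N\to\infty}$ gives $\limsup\leq-\inf_F\Lambda^\ast_X$ by the stated monotonicity. (For $\Rn$ with $n>1$ this step would instead require exponential tightness plus a finite subcover of a compact set by small balls; only $n=1$ is needed here.)

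The real work is the matching \emph{lower} bound: for every open $G$ and every $y\in G$ with $\Lambda^\ast_X(y)<\infty$, I would show $\liminf_{N\to\infty}\tfrac1N\log P[\SNhat\in G]\geq-\Lambda^\ast_X(y)$. The device is exponential tilting. First, in the favorable case that $y=\Lambda_X'(\lambda_0)$ for some $\lambda_0$ interior to $\calD:=\{\lambda\st\Lambda_X(\lambda)<\infty\}$, let $\mu$ be the law of $X_1$ and define the tilted law $\tilde\mu(dz):=e^{\lambda_0 z-\Lambda_X(\lambda_0)}\mu(dz)$, a probability measure with mean $\Lambda_X'(\lambda_0)=y$. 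Choosing $\delta>0$ with $(y-\delta,y+\delta)\subseteq G$,
\[
  P[\SNhat\in G]\ \geq\ \int_{|\bar z-y|<\delta}\prod_{i=1}^N\mu(dz_i)\ =\ \int_{|\bar z-y|<\delta}e^{-N\lambda_0\bar z+N\Lambda_X(\lambda_0)}\prod_{i=1}^N\tilde\mu(dz_i)\ \geq\ e^{-N(\Lambda^\ast_X(y)+|\lambda_0|\delta)}\,\tilde P\big[\,|\SNhat-y|<\delta\,\big],
\]
and $\tilde P[\,|\SNhat-y|<\delta\,]\to1$ by the weak law of large numbers under $\tilde\mu$; letting $\delta\downarrow0$ then gives the claim for such $y$. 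For $y$ not of this form — e.g.\ where $\Lambda_X$ fails to be differentiable, or $y$ is an endpoint of the convex hull of $\supp\mu$, or $\calD=\{0\}$ — I would fall back on a truncation argument: apply the previous case to $X_1$ conditioned on $\{|X_1|\leq M\}$, whose log-MGF is finite (hence smooth) on all of $\Rone$ so that the tilting is available for every interior point, then check that the associated rate functions decrease to $\Lambda^\ast_X$ as $M\uparrow\infty$ and pass to the limit. I expect this truncation-and-limit step to be the main obstacle, since it requires monotone control of how the Fenchel--Legendre transform behaves under conditioning together with a careful interchange of limits. (In the applications of interest the media is bounded, so $\Lambda_X$ is finite and real-analytic on all of $\Rone$ and this complication disappears.)

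It then remains to record that $\Lambda^\ast_X$ is a good convex rate function and to deduce \eqref{eq:LDP_equality}. Convexity and lower semicontinuity are automatic because $\Lambda^\ast_X(\cdot)=\sup_{\lambda\in\Rone}[\lambda\,(\cdot)-\Lambda_X(\lambda)]$ is a pointwise supremum of continuous affine functions; nonnegativity follows by taking $\lambda=0$, since $\Lambda_X(0)=0$. The sub-level sets are compact — the ``good'' property — once $0$ is interior to $\calD$: then for a small fixed $\lambda>0$ the inequalities $\Lambda^\ast_X(x)\geq\lambda x-\Lambda_X(\lambda)$ and $\Lambda^\ast_X(x)\geq-\lambda x-\Lambda_X(-\lambda)$ force $\Lambda^\ast_X(x)\to\infty$ as $|x|\to\infty$; this holds in particular when the $X_n$ are bounded, as in our setting. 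Finally, \eqref{eq:LDP_equality} comes from applying the LDP to $\Gamma=[\ell,\infty)$ with $\ell>\bar x$: the interior $(\ell,\infty)$ and the closure $[\ell,\infty)$ share the same infimum of $\Lambda^\ast_X$, namely $\Lambda^\ast_X(\ell)$, because $\Lambda^\ast_X$ is continuous on the interior of its effective domain, so the upper and lower bounds of Definition \ref{definition:LDP} coincide.
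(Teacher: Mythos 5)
The paper offers no proof of this statement: it is Cram\'er's classical theorem, quoted directly from \cite{Dembo_Large}. Your proposal is a correct reconstruction of the standard proof given there --- Chernoff's bound plus the monotonicity of $\Lambda^\ast_X$ on either side of the mean for the upper bound over closed sets, exponential tilting and the weak law of large numbers under the tilted measure for the lower bound, and a truncation argument for points not exposed by a gradient of $\Lambda_X$ --- so there is nothing to compare against; you have simply supplied the proof the authors chose to cite. Two small remarks: goodness of the rate function does require $0\in\calD_{\Lambda_X}^\circ$ (automatic here since the media, hence $X_n$, is bounded), and your deduction of \eqref{eq:LDP_equality} tacitly assumes $\ell$ lies in the interior of the effective domain of $\Lambda^\ast_X$ so that $\inf_{(\ell,\infty)}\Lambda^\ast_X=\Lambda^\ast_X(\ell)$; this is exactly the caveat the paper itself records in the remark following Theorem \ref{theorem:GE}, and again is harmless in the bounded setting.
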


Since $\Lambda_X(0)=0$, we always have $\Lambda^\ast_X(\ell)\geq0$, i.e. we never have exponential growth.  Jensen's inequality shows that $\Lambda_X(\lambda) \geq \lambda \E X$, so for $\ell= \E X$ we also have $\lambda\ell - \Lambda_X(\lambda )\leq0$.  Therefore $\Lambda^\ast_X(\E X)=0$.  This makes sense in view of the law of large numbers.  

\subsection{The G\"artner-Ellis theorem and contraction principle}
\label{subsection:Gartner_Ellis}
\begin{definition}
  For any function $H$ taking values in $(-\infty,\infty]$, we define $\calD_H :=\{x\st H(x)<\infty\}$.  
\end{definition}
The G\"artner-Ellis theorem will be used to prove results for our model problems.
\begin{definition}
  Convex $\Lambda:\Rn\to(-\infty,\infty]$ is called \emph{essentially smooth} if $\calD_\Lambda^\circ$ is nonempty, $\Lambda$ is differentiable throughout $\calD_\Lambda^\circ$, and $\Lambda$ is \emph{steep}.  Steep means that $\lim_{j\to\infty}|\nabla\Lambda(\lambda_j)|=\infty$ whenever $\{\lambda_j\}$ is a sequence in $\calD_\Lambda^\circ$ converging to a boundary point of $\calD_\Lambda^\circ$.
  \label{definition:essentially_smooth}
\end{definition}
\begin{theorem}[G\"artner-Ellis \cite{Dembo_Large}]
  Suppose 
  \begin{align*}
    \Lambda(\lambda) :&= \lim_{\eps\to0}\eps\log\Exp{e^{\eps^{-1}\lambda\cdot\Zeps}}
  \end{align*}
  exists as an extended real number.  Furthermore suppose that $\Lambda$ is essentially smooth, lower semicontinuous and that the origin belongs to the interior of $\calD_\Lambda$.  Then $\Zeps$ satisfies an LDP with good convex rate function $\Lambda^\ast$ defined by
  \begin{align*}
    \Lambda^\ast(\ell) :&= \sup_{\lambda\in\Rn}\left[ \lambda\cdot\ell - \Lambda(\lambda) \right].
  \end{align*}
  \label{theorem:GE}
\end{theorem}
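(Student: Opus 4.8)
The plan is to reproduce the classical two-sided argument, handling the upper and lower bounds separately and inserting the hypotheses exactly where they are needed. For the upper bound I would first treat compact sets by a Chernoff/Chebyshev estimate: for a point $\ell$ and any $\lambda\in\Rn$, tilting by $e^{\eps^{-1}\lambda\cdot z}$ gives $P[\Zeps\in B(\ell,\delta)]\leq e^{-\eps^{-1}(\lambda\cdot\ell-|\lambda|\delta)}\Exp{e^{\eps^{-1}\lambda\cdot\Zeps}}$, so taking $\eps\log$, passing to the $\limsup$, and using the assumed limit $\Lambda(\lambda)$ yields $\limsup_{\eps\to0}\eps\log P[\Zeps\in B(\ell,\delta)]\leq -(\lambda\cdot\ell-\Lambda(\lambda))+|\lambda|\delta$. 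Optimizing $\lambda$ over a large ball, covering a compact $\Gamma$ by finitely many such small balls, and letting $\delta\to0$ gives $\limsup_{\eps\to0}\eps\log P[\Zeps\in\Gamma]\leq -\inf_{y\in\Gamma}\Lambda^\ast(y)$. To upgrade from compact to arbitrary closed sets I would establish exponential tightness: since the origin lies in the interior of $\calD_\Lambda$, for each coordinate direction there is $\theta>0$ with $\Lambda(\pm\theta e_i)<\infty$, and a one-sided Chebyshev bound then produces compact cubes $K_L$ with $\limsup_{\eps\to0}\eps\log P[\Zeps\notin K_L]\leq -L$; combining this with the compact-set bound gives the full upper bound, while goodness of $\Lambda^\ast$ follows from lower semicontinuity of $\Lambda$ together with $0\in\calD_\Lambda^\circ$.

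For the lower bound it suffices to show that $\liminf_{\eps\to0}\eps\log P[\Zeps\in G]\geq -\Lambda^\ast(\ell)$ for every open $G$ and every $\ell\in G$. I would first prove this when $\ell$ is an exposed point of $\Lambda^\ast$ whose exposing hyperplane $\eta$ lies in $\calD_\Lambda^\circ$, via a change of measure: define $\tilde P_\eps$ through $\d\tilde P_\eps/\d P_\eps(z)=\exp(\eps^{-1}(\eta\cdot z-\Lambda_\eps(\eta)))$ with $\Lambda_\eps(\eta):=\eps\log\Exp{e^{\eps^{-1}\eta\cdot\Zeps}}\to\Lambda(\eta)$. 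Then for small $\delta$, $\eps\log P[\Zeps\in G]\geq -(\eta\cdot\ell-\Lambda_\eps(\eta))-|\eta|\delta+\eps\log\tilde P_\eps(B(\ell,\delta))$, and the tilted family has limiting logarithmic moment generating function $\lambda\mapsto\Lambda(\lambda+\eta)-\Lambda(\eta)$, whose Fenchel--Legendre transform vanishes precisely at $\ell$ because $\ell$ is exposed by $\eta$. Applying the already-established upper bound to the tilted family on the closed set $\Rn\setminus B(\ell,\delta)$ shows $\tilde P_\eps(B(\ell,\delta))\to1$, and since $\eta\cdot\ell-\Lambda(\eta)=\Lambda^\ast(\ell)$ we obtain $\liminf_{\eps\to0}\eps\log P[\Zeps\in G]\geq -\Lambda^\ast(\ell)-|\eta|\delta$, after which $\delta\to0$. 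Finally, to pass from exposed points to an arbitrary $\ell\in G$ with $\Lambda^\ast(\ell)<\infty$, I would invoke Rockafellar's theorem on essentially smooth convex functions: essential smoothness of $\Lambda$ guarantees that every such $\ell$ is a limit of exposed points $\ell_n$ (with exposing hyperplanes in $\calD_\Lambda^\circ$) along which $\Lambda^\ast(\ell_n)\to\Lambda^\ast(\ell)$; for large $n$ one has $\ell_n\in G$, and the exposed-point bound then finishes the argument.

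The main obstacle is the lower bound, and within it the convex-analytic step: one must know that the exposed points whose exposing hyperplanes lie interior to $\calD_\Lambda$ are ``dense enough'' to recover $\inf_{y\in G}\Lambda^\ast(y)$ for every open $G$. This is exactly where essential smoothness (differentiability throughout $\calD_\Lambda^\circ$ together with steepness) is indispensable: without steepness, $\Lambda^\ast$ can carry affine pieces near the boundary of its domain that are not approached by exposed points, and the lower bound genuinely fails. A secondary technical point is justifying $\Lambda_\eps(\eta)\to\Lambda(\eta)$ and the finiteness of the tilted moment generating functions tightly enough to run the upper bound for the tilted family; this follows from $\eta\in\calD_\Lambda^\circ$ together with convexity and lower semicontinuity of $\Lambda$, but it has to be set up with some care. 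The remaining ingredients --- the Chernoff estimate, the finite subcover, and the exponential tightness from $0\in\calD_\Lambda^\circ$ --- are routine.
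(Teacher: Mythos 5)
Your proposal is a faithful reconstruction of the standard proof of the G\"artner--Ellis theorem (Chernoff bound plus finite covering for compact sets, exponential tightness from $0\in\calD_\Lambda^\circ$, exponential tilting at exposed points for the lower bound, and Rockafellar's theorem on essentially smooth convex functions to pass from exposed points to general open sets). The paper does not prove this result---it imports it verbatim from Dembo--Zeitouni---so there is nothing to compare beyond noting that your argument is precisely the one in that reference and is correct.
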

\begin{remark*}
  If $n=1$ and $\Lambda^\ast$ is finite in a neighborhood of $\ell>a:=\lim\E\Zeps$ then convex $\Lambda^\ast$ is non-decreasing on $[a,\infty)$ and continuous in this neighborhood.  Therefore \eqref{eq:LDP_equality} holds.
\end{remark*}
Notice that the G\"artner-Ellis theorem does not require independence.  Indeed, one can use it to prove an LDP for mixing random variables \cite{Dembo_Large,Bryc_Large_1992}.  

While the G\"artner-Ellis theorem allows us to obtain an LDP for oscillatory integrals, we need the contraction principle for functions of those integrals such as \ref{eq:basic_elliptic_solution}.
\begin{theorem}[Contraction principle]
  Suppose $f:\Rn\to\Rm$ is continuous and $I:\Rn\to[0,\infty]$ is a good rate function for the family of random variables $\Zeps$ and associated measures $\mu_\eps$ ($\mu_\eps(A)=P[\Zeps\in A]$).  For $y\in\Rm$, define
  \begin{align*}
    I'(y) :&= \inf\{I(x)\st x\in\Rn, y=f(x)\}.
  \end{align*}
  Then $I'$ is a good rate function controlling the LDP associated with the measures $\mu_\eps\circ f^{-1}$ ($\mu_\eps\circ f^{-1}(B) = P[f(\Zeps)\in B]$).
  \label{theorem:contraction_principle}
\end{theorem}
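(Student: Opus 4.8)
The plan is to run the classical argument (cf.\ Dembo--Zeitouni): first check that $I'$ is a good rate function by identifying its sub-level sets with continuous images of those of $I$, and then push the large-deviation upper and lower bounds for $\Zeps$ through $f$, using only that $f$ continuous implies $f^{-1}(\text{closed})$ is closed and $f^{-1}(\text{open})$ is open. Note first that $I'$ takes values in $[0,\infty]$ (with the convention $\inf\emptyset=+\infty$ when the fiber $f^{-1}(\{y\})$ is empty), since $I\geq0$.

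The first step is to show that for every $\alpha\in[0,\infty)$,
\[
  \Psi_{I'}(\alpha) = f\bigl(\Psi_I(\alpha)\bigr).
\]
The inclusion $\supseteq$ is immediate from the definition of $I'$. For $\subseteq$, fix $y$ with $I'(y)\leq\alpha$ and pick a minimizing sequence $x_k$ with $f(x_k)=y$ and $I(x_k)\to I'(y)$. For $k$ large, $x_k\in\Psi_I(\alpha+1)$, which is compact because $I$ is good, so along a subsequence $x_k\to x^\ast$. Continuity of $f$ gives $f(x^\ast)=y$, and lower semicontinuity of $I$ gives $I(x^\ast)\leq\liminf_k I(x_k)=I'(y)\leq\alpha$, hence $y\in f(\Psi_I(\alpha))$. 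Since $\Psi_I(\alpha)$ is compact and $f$ continuous, $\Psi_{I'}(\alpha)=f(\Psi_I(\alpha))$ is compact, hence closed; therefore $I'$ is a good rate function. (As a byproduct, this shows the infimum defining $I'(y)$ is attained whenever finite.)

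Next, for the upper bound let $F\subseteq\Rm$ be closed, so that $f^{-1}(F)$ is closed in $\Rn$. The upper bound for $\Zeps$ gives
\[
  \limsup_{\eps\to0}\eps\log\mu_\eps\bigl(f^{-1}(F)\bigr) \leq -\inf_{x\in f^{-1}(F)}I(x),
\]
and partitioning the domain of the infimum along fibers of $f$,
\[
  \inf_{x\in f^{-1}(F)}I(x) = \inf_{y\in F}\ \inf_{x\st f(x)=y}I(x) = \inf_{y\in F}I'(y),
\]
which, since $\mu_\eps\circ f^{-1}(F)=\mu_\eps(f^{-1}(F))$, is the desired upper bound for $\mu_\eps\circ f^{-1}$. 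For the lower bound, let $G\subseteq\Rm$ be open; then $f^{-1}(G)$ is open, the lower bound for $\Zeps$ gives $\liminf_{\eps\to0}\eps\log\mu_\eps(f^{-1}(G))\geq-\inf_{x\in f^{-1}(G)}I(x)$, and the same fiberwise identity rewrites the right-hand side as $-\inf_{y\in G}I'(y)$. Combining the two bounds is exactly Definition~\ref{definition:LDP} for $\mu_\eps\circ f^{-1}$ with rate function $I'$.

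The only genuinely delicate point is the first step: without goodness of $I$, the infimum defining $I'$ need not be attained, the identity $\Psi_{I'}(\alpha)=f(\Psi_I(\alpha))$ can fail, and $I'$ can fail to be lower semicontinuous — so the hypothesis that $I$ is a \emph{good} rate function is used essentially there. Everything afterward is a formal transfer of the bounds using continuity of $f$ together with the tautology that minimizing over $\Rn$ equals minimizing first over each fiber $f^{-1}(\{y\})$ and then over $y\in\Rm$.
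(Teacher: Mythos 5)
Your proof is correct. The paper states the contraction principle without proof, as a standard result quoted from Dembo--Zeitouni, so there is no in-paper argument to compare against; what you have written is precisely the classical proof (their Theorem 4.2.1): the identification $\Psi_{I'}(\alpha)=f(\Psi_I(\alpha))$ via a minimizing sequence trapped in a compact sub-level set, followed by the formal transfer of the upper bound over closed sets and the lower bound over open sets through $f^{-1}$, using the fiberwise decomposition of the infimum. Your closing remark correctly isolates where goodness of $I$ is essential, and the convention $\inf\emptyset=+\infty$ for empty fibers is handled properly.
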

In other words, with $Y_\eps:=f(\Zeps)$, the rate at which $\Zeps$ concentrates away from $\ell$ will be determined by the point in $f^{-1}(\{\ell\})$ holding the most mass.

\subsection{Large deviations for $\ueps$}
\label{subsection:large_deviations_for_functions_of_oscillatory_integrals}
In light of \eqref{eq:basic_elliptic_solution}, we have
\begin{align}
  \begin{split}
    \ueps(x) &= g(\Zeps),\qquad g:\Rone^2\times(0,\infty)^2\to\Rone,\quad g(z)=g(z_1,z_2,z_3,z_4)=-z_1 + z_2\frac{z_3}{z_4},\\
    \Zeps &= (\Zeps_1,\dots,\Zeps_4),\quad \Zeps_i = \int_0^1\frac{H_i(s)}{\Aeps(s)}\ds,\\
    H_1 &= F\one_{(0,x)},\quad H_2=F,\quad H_3=\one_{(0,x)},\quad H_4 = 1.
  \end{split}
  \label{eq:Zeps_definition}
\end{align}
Also set
\begin{align*}
  \bfH :&= (H_1,H_2,H_3,H_4),
\end{align*}
so that $\lambda\cdot\Zeps = \int \lambda\cdot\bfH/\Aeps$.

Using the G\"artner-Ellis theorem we will find rate function $\IZeps$ for $\Zeps$, and then by the contraction principle the rate function for $\ueps$ is 
\begin{align}
  \Iueps(\ell) &= \inf_{z\in g^{-1}\{\ell\}}\IZeps(z).
  \label{eq:Iueps_in_terms_of_IZeps}
\end{align}

The real work involved here is in characterizing the limiting Cram\'er functional $\lim\eps\log\Exi\exp\{\eps^{-1}\lambda\cdot \Zeps\}$.  We will do this for the media types (i) and (ii) mentioned in the introduction.

\subsubsection{Parameterized, independent, uniformly elliptic media}
\label{subsubsection:parameterized_media}
We assume here that the high frequency media is piecewise constant and independent:
\begin{align}
  \begin{split}
    A(x,y) &= a(x,\xi) + b(y,\theta),\quad\mbox{with every realization $a(x,\xi)$ continuous},\\
    b(y,\theta) &= \nu_b\sum_{j=1}^\infty \theta_j \one_{[n-1,n)}(y),\quad \theta_i\sim_\iid \pi_\theta,\quad |\theta|\leq1,\\
    0&<\nu_1\leq A(x,y)\leq\nu_2.
  \end{split}
  \label{eq:parameterized_media_definition}
\end{align}

Considered as a discrete process (at points centered at $n\eps$), the field is stationary and ergodic.  It is not truly stationary since the correlation $\Exi A(x,y)A(x,z)$ depends on more than the difference $y-z$.  Nonetheless we apply our theorems and obtain results that are validated by simulation.
The correlation condition is satisfied trivially since $A(x_1,y_1)$, and $A(x_2,y_2)$ are conditionally independent whenever $|y_1-y_2|>1$.  
Note also that the low and high frequency parts are in the form of Karhunen-Lo\'eve expansions, but the total field is not.  

Since this solution \eqref{eq:basic_elliptic_solution} involves $\Aeps^{-1}$, it is not surprising that we need to define
\begin{align}
  V_\alpha :&= \frac{1}{\alpha + \nu_b\theta} \sim \pi_{V_\alpha}(v) = \frac{1}{\nu_b}\frac{1}{v^2}\pi_\theta\left( \frac{1}{\nu_b}\left( \frac{1}{v}-\alpha \right) \right),
  \label{eq:Valpha_definition}
\end{align}

The main result for this media is
\begin{theorem}
  With $g$, $\Zeps$, $\bfH(s)$, $V_\alpha$ given by \eqref{eq:Zeps_definition}, \eqref{eq:parameterized_media_definition}, \eqref{eq:Valpha_definition}, define
\begin{align*}
  \Lambda(\lambda) :&= \int_0^1\Lambda(V_{a(s,\xi)},\lambda\cdot \bfH(s))\ds,
\end{align*}
then $\Lambda\in C^\infty(\Rfour)$ is a convex function such that when $\Aeps$ is defined by \eqref{eq:parameterized_media_definition}
\begin{align*}
  \lim\eps\log \Exi e^{\eps^{-1}\lambda\cdot\Zeps} &= \Lambda(\lambda).
\end{align*}
Moreover, for fixed $\xi$, $\Zeps$ satisfies a large deviation principle with good convex rate function 
\begin{align*}
  \Lambda^\ast(\ell) :&= \sup_{\lambda\in\Rfour}\left[ \lambda\cdot\ell - \Lambda(\lambda) \right],
\end{align*}
and $\ueps$ satisfies a large deviation principle with good rate function
\begin{align*}
  \Iueps(\ell) :&= \inf_{z\in g^{-1}\{\ell\}}\Lambda^\ast(z).
\end{align*}
\label{theorem:LDP_for_parameterized_problems}
\end{theorem}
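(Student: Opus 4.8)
The plan is to verify the hypotheses of the Gärtner--Ellis theorem (Theorem~\ref{theorem:GE}) for $\Zeps$ and then invoke the contraction principle (Theorem~\ref{theorem:contraction_principle}) to transfer the LDP to $\ueps=g(\Zeps)$. The first and main task is to compute the limiting Cramér functional $\Lambda(\lambda)=\lim_{\eps\to0}\eps\log\Exi e^{\eps^{-1}\lambda\cdot\Zeps}$ and identify it with $\int_0^1\Lambda(V_{a(s,\xi)},\lambda\cdot\bfH(s))\ds$. Fix $\xi$. Writing $\lambda\cdot\Zeps=\int_0^1 (\lambda\cdot\bfH(s))/\Aeps(s)\,\ds$, and using that on the block $[(n-1)\eps, n\eps)$ one has $\Aeps(s)=a(s,\xi)+\nu_b\theta_n$ with the $\theta_n$ i.i.d., I would first replace $a(s,\xi)$ by its value at a single representative point of each block (legitimate since every realization $a(\cdot,\xi)$ is continuous, hence uniformly continuous on $[0,1]$, and $\bfH$ is bounded) to get, up to an error that is $o(1)$ uniformly,
\begin{align*}
  \eps\log\Exi e^{\eps^{-1}\lambda\cdot\Zeps}
  &\approx \eps\sum_{n=1}^{\lfloor\eps^{-1}\rfloor}\log\Exi\exp\Bigl\{(\lambda\cdot\bfH(s_n))\,V_{a(s_n,\xi)}\Bigr\}
  = \eps\sum_{n=1}^{\lfloor\eps^{-1}\rfloor}\Lambda\bigl(V_{a(s_n,\xi)},\lambda\cdot\bfH(s_n)\bigr),
\end{align*}
where $V_\alpha$ is as in \eqref{eq:Valpha_definition}. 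This is a Riemann sum, with $\eps$ playing the role of the mesh, for $\int_0^1\Lambda(V_{a(s,\xi)},\lambda\cdot\bfH(s))\ds$; uniform ellipticity $\nu_1\le\Aeps\le\nu_2$ makes $V_\alpha$ bounded, so each $\Lambda(V_\alpha,\cdot)$ is finite and smooth, and $s\mapsto\Lambda(V_{a(s,\xi)},\lambda\cdot\bfH(s))$ is bounded and piecewise continuous (it inherits the piecewise continuity of $\bfH$ through $H_1,H_3$), so the Riemann sums converge. The $o(1)$ control of the block-replacement error needs a short estimate using $|e^u-e^v|\le e^{\max(u,v)}|u-v|$ together with Jensen, but the boundedness of everything in sight makes it routine.

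Next I would establish the regularity of $\Lambda$ needed by Gärtner--Ellis. Convexity is inherited: each $\lambda\mapsto\Lambda(V_\alpha,\lambda\cdot\bfH(s))$ is a composition of the convex logarithmic MGF with an affine map, hence convex, and the integral of convex functions is convex. For smoothness, since $V_\alpha$ is a bounded random variable the MGF $\lambda\mapsto\Exi e^{\lambda V_\alpha}$ is entire (analytic on all of $\Cone$), so $\Lambda(V_\alpha,\cdot)\in C^\infty(\Rone)$ with derivatives bounded uniformly in $\alpha\in[\nu_1,\nu_2]$ and locally uniformly in $\lambda$; differentiating under the integral sign in $s$ then gives $\Lambda\in C^\infty(\Rfour)$. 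Because $\Lambda$ is finite on all of $\Rfour$, we have $\calD_\Lambda=\Rfour$, so $\calD_\Lambda^\circ=\Rfour$ is nonempty, the origin is interior, $\Lambda$ is differentiable throughout $\calD_\Lambda^\circ$, lower semicontinuity is automatic for a finite convex function, and steepness holds vacuously since $\calD_\Lambda^\circ$ has no boundary. Thus all hypotheses of Theorem~\ref{theorem:GE} are met and $\Zeps$ satisfies the LDP in $\Rfour$ with good convex rate function $\Lambda^\ast$.

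Finally, $g$ in \eqref{eq:Zeps_definition} is continuous on $\Rone^2\times(0,\infty)^2$; on the relevant range of $\Zeps$ the last two coordinates satisfy $\Zeps_3,\Zeps_4\in[\nu_2^{-1},\nu_1^{-1}\cdot\text{(length)}]$ bounded away from $0$, so $g$ is continuous where it matters and the contraction principle (Theorem~\ref{theorem:contraction_principle}) applies, yielding the LDP for $\ueps=g(\Zeps)$ with good rate function $\Iueps(\ell)=\inf_{z\in g^{-1}\{\ell\}}\Lambda^\ast(z)$. (One should note $\Lambda^\ast$ is a \emph{good} rate function by the conclusion of Gärtner--Ellis, which is what the contraction principle requires.) The goodness of $\Iueps$ follows. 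I expect the main obstacle to be the first step: making rigorous the passage from $\eps\log\Exi e^{\eps^{-1}\lambda\cdot\Zeps}$ to the Riemann sum, in particular controlling the replacement of the continuous profile $a(\cdot,\xi)$ by block-constant values and handling the boundary block of width $<\eps$ near $s=1$ (where $\lfloor\eps^{-1}\rfloor\eps\ne1$) — these are $o(1)$ but require care since they sit inside a logarithm of an expectation. A secondary, purely bookkeeping point is that $\bfH$ has jumps at $s=x$, so the Riemann-sum convergence should be argued on $[0,x]$ and $[x,1]$ separately.
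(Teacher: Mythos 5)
Your proposal is correct and follows essentially the same route as the paper: condition on $\xi$, exploit the block structure and independence of the $\theta_n$ to factor the moment generating function, control the partial boundary block, pass to the limiting integral $\int_0^1\Lambda(V_{a(s)},\lambda\cdot\bfH(s))\ds$ via a Riemann-sum argument, observe that boundedness of $V_\alpha$ makes the G\"artner--Ellis hypotheses trivial, and finish with the contraction principle. The only cosmetic difference is that the paper sandwiches each block between $\Lambda(V_{\aNupper},\lambda\cdot\HNlower)$ and $\Lambda(V_{\aNlower},\lambda\cdot\HNupper)$ using blockwise minima and maxima rather than evaluating at a representative point, which packages the replacement error you describe into a squeeze argument.
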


\begin{proof}
We will condition on $\xi$ and then approximate $a(x)$ by a piecewise constant function so that $a(x)$ appears only as a parameter.  
Now
\begin{align*}
  \Expxi{e^{\eps^{-1}\lambda\cdot\Zeps}} &= \Expxi{\exp\left\{ \eps^{-1}\int_0^1\frac{\lambda\cdot \bfH(s)}{\Aeps(s)}\ds \right\}}.
\end{align*}
So for $N<\eps^{-1}<N+1\in\Nat$
\begin{align*}
  \eps\log\Expxi{e^{\eps^{-1}\lambda\cdot \Zeps}} &= \eps N\frac{1}{N}\sum_{n=1}^N \log\Expxi{e^{\lambda\cdot X_N(n)}} + \eps \log\Expxi{e^{\lambda\cdot Y_N}},\\
  X_N(n) :&=\int_{n-1}^n \frac{\bfH(\soverN)}{a(\soverN) + \nu_b\theta},\quad Y_N := \int_N^{\eps^{-1}}\frac{\bfH(\soverN)}{a(\soverN)+\nu_b\theta}.
\end{align*}
Since $\nu_2^{-1}\leq(a+\nu_b\theta)^{-1}\leq\nu_1^{-1}$ we also have upper and lower bounds on $\Expxi{\exp\left\{ \lambda\cdot Y_N \right\}}$.  Therefore $\eps\log\Expxi{\exp\left\{ \lambda\cdot Y_N \right\}}\to0$.  Using also the fact $\eps N\to1$ we have
\begin{align*}
  \lim_{\eps\to0}\eps\log\Expxi{e^{\eps^{-1}\lambda\cdot \Zeps}} &= \lim_{N\to\infty} \frac{1}{N}\sum_{n=1}^N \log\Expxi{e^{\lambda\cdot X_N(n)}}. 
\end{align*}
We henceforth study the limit on the right.

We can approximate 
\begin{align*}
  X_N(n) &\leq \frac{\HNupper(n)}{\aNlower(n) + \nu_b\theta_n} \sim \HNupper(n)V_{\aNlower(n)}, \\
  \HNupper(n) :&= \max_{n-1\leq s<n}\bfH(\soverN), \quad \aNlower(n) := \min_{n-1\leq s<n}a(\soverN).
\end{align*}
We thus have
\begin{align}
  \eps\log\Exp{\lambda\veps/\eps} &\leq \frac{1}{N} \sum_{n=1}^N \Lambda(V_{\aNlower(n)}, \lambda\cdot\HNupper(n)).
  \label{eq:independent_media_pre_asymptotic_upper_bound2}
\end{align}
Similarly we can choose $\HNlower(n)$ and $\aNupper(n)$ to provide a lower bound.  Together they yield
\begin{align*}
  \frac{1}{N} \sum_{n=1}^{N}\Lambda(V_{\overline a_{N-1}(n)}, \lambda\cdot\underline \bfH_{N-1}(n)) &\leq \frac{1}{N}\sum_{n=1}^N\log\Expxi{e^{\lambda\cdot X_N(n)}}\\
  &\leq \frac{1}{N}\sum_{n=1}^N\Lambda(V_{\aNlower(n)}, \lambda\cdot\HNupper(n)).
\end{align*}

The ellipticity bounds imply that $(\alpha,\lambda)\mapsto \Lambda(V_\alpha,\lambda)$ is $C^\infty$ for $\alpha$ taking values in the closure of $\range(a)$.  
Therefore, 
\begin{align*}
  \Lambda(V_{\aNlower(n)}, \lambda\cdot\HNupper(n)) &\to \Lambda(V_{a(\noverN)}, \lambda\cdot \bfH(\noverN)),\quad N\to\infty.
\end{align*}
And thus (after extending $\aNlower$, $\HNupper$ to be piecewise constant), and using the continuity of $s\mapsto a(s,\xi)$ and $\bfH(s)$ we have
\begin{align*}
  \frac{1}{N}\sum_{n=1}^N\Lambda(V_{\aNlower(n)}, \lambda\cdot\HNupper(n)) &=\int_0^1 \Lambda(V_{\aNlower(sN)}, \lambda\cdot\HNupper(sN))\ds \\
  &\to \int_0^1 \Lambda(V_{a(s)}, \lambda\cdot \bfH(s))\ds.
\end{align*}
The same holds for the lower bound.  We have thus shown
\begin{align}
  \lim\eps\log e^{\eps^{-1}\lambda\cdot\Zeps} &= \int_0^1\Lambda(V_{a(s)}, \lambda\cdot \bfH(s))\ds.
  \label{eq:cramer_functional_12}
\end{align}
Since $\Exp{\exp\left\{ \lambda V_\alpha \right\}}$ is finite for all $\alpha$, the hypothesis of the G\"artner-Ellis theorem are trivially satisfied.  Recalling the definition of $V_\alpha$, $\Lambda$, we see that the theorem is proved.
\end{proof}

\subsubsection{Convolved media with no uniform (in $\omega$) lower bound}
\label{subsubsection:convolved_media}
Here we obtain a large deviation principle for dependent media given by a convolution of random variables that, while being positive, have no uniform lower bound.  Convolution provides a convenient way to generate dependencies.  

To avoid additional technicalities, we restrict our attention to families of functions indexed by $\eps$ such that $\eps^{-1}\in\Nat$.  Starting from this result, using the notion of \emph{exponential equivalence}, and adding an assumption of a finite logarithmic moment generating function, it would be possible to prove an LDP for general $\eps\in(0,\infty)$.

Define
\begin{align}
  \begin{split}
    \Aeps(x) &= A(\sovereps), \quad\mbox{where}\quad 
    \frac{1}{A(s)} := \sum_{n=1}^\infty \one_{[n-1, n)}(s) \gamma_n, \\
    \gamma_n :&= \sum_{m=-\infty}^\infty h_{n-m}\beta_m,\qquad
    h_n\geq0, \quad\|h\|_1 := \sum_kh_k < \infty,
  \end{split}
  \label{eq:Aeps_convolved}
\end{align}
and the $\{\beta_m\}_{m=-\infty}^\infty$ are non-negative i.i.d. random variables each depending on the same random vector (parameter) $\xi$.
The random variables $\gamma_n$ are well defined so long as the characteristic function $\phi_M(t):=\prod_{|k|<M}\Exp{e^{ith_{k}\beta}}$ has a continuous limit $\phi(t)$.  We take this as an assumption and proceed.

In this case, the large deviation principles for $\Zeps$, $\ueps$ are a direct result of lemmas \ref{lemma:technical_limit}, \ref{lemma:limiting_cramer_functional} (below), the G\"artner-Ellis theorem, and the contraction principle.
\begin{theorem}
  With $g$, $\Zeps$, and $\bfH(s)$ given by \eqref{eq:Zeps_definition}, and \eqref{eq:parameterized_media_definition}, define
  \begin{align*}
    \Lambda(\lambda) :&= \int_0^1\Lambda_{\beta(\xi)}(\|h\|_1\lambda\cdot \bfH(s))\ds,
  \end{align*}
  then $\Lambda\in C^\infty(\Rfour)$ is a convex function such that when $\Aeps$ is defined by \eqref{eq:Aeps_convolved}
  \begin{align*}
    \lim\eps\log \Exi e^{\eps^{-1}\lambda\cdot\Zeps} &= \Lambda(\lambda).
  \end{align*}
  If in addition $\Lambda$ is steep (see proposition \ref{proposition:steepness_criteria}), then for fixed $\xi$, $\Zeps$ satisfies a large deviation principle with good convex rate function 
  \begin{align*}
    \Lambda^\ast(\ell) :&= \sup_{\lambda\in\Rfour}\left[ \lambda\cdot\ell - \Lambda(\lambda) \right],
  \end{align*}
  and $\ueps$ satisfies a large deviation principle with good rate function
  \begin{align*}
    \Iueps(\ell) :&= \inf_{z\in g^{-1}\{\ell\}}\Lambda^\ast(z).
  \end{align*}
  \label{theorem:LDP_for_convolved_media}
\end{theorem}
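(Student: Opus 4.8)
The plan is to follow the template of the proof of Theorem~\ref{theorem:LDP_for_parameterized_problems}: condition on $\xi$ throughout, identify the limiting Cram\'er functional $\lim\eps\log\Expxi{e^{\eps^{-1}\lambda\cdot\Zeps}}$, verify the hypotheses of the G\"artner-Ellis theorem, and push the resulting LDP through $g$ via the contraction principle. The one essential difference from the parameterized case is that the blocks $\gamma_n$ in \eqref{eq:Aeps_convolved} are a moving average of the i.i.d.\ variables $\beta_m$ and are therefore dependent, so the exponent no longer factors over $n$; it must instead be reorganized over $m$, where independence is restored.

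\textbf{Step 1 (reorganizing the exponent; Lemma~\ref{lemma:limiting_cramer_functional}).} Since $\eps^{-1}=N\in\Nat$, \eqref{eq:Aeps_convolved} gives $\Aeps(s)^{-1}=\gamma_n$ on $s\in[(n-1)/N,n/N)$, so, with $c_n^N:=N\int_{(n-1)/N}^{n/N}\lambda\cdot\bfH(s)\ds$ and $d_m^N:=\sum_{n=1}^N c_n^N h_{n-m}$,
\begin{align*}
  \eps^{-1}\lambda\cdot\Zeps \;=\; N\int_0^1\frac{\lambda\cdot\bfH(s)}{\Aeps(s)}\ds \;=\; \sum_{n=1}^N\gamma_n\,c_n^N \;=\; \sum_{m\in\Zint}\beta_m\,d_m^N,
\end{align*}
the reordering being legitimate because $\sum_m|d_m^N|\le\|h\|_1\sum_n|c_n^N|<\infty$. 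As the $\beta_m$ are i.i.d.\ conditionally on $\xi$, a standard argument (finite independence, then a monotone/dominated passage to the infinite product, using $\|h\|_1<\infty$ and the exponential integrability of $\beta$) yields the exact identity
\begin{align*}
  \eps\log\Expxi{e^{\eps^{-1}\lambda\cdot\Zeps}} \;=\; \frac1N\sum_{m\in\Zint}\Lambda_{\beta(\xi)}\!\left(d_m^N\right),
\end{align*}
with both sides equal to $+\infty$ simultaneously when some $d_m^N\notin\calD_{\Lambda_{\beta(\xi)}}$.

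\textbf{Step 2 (the Riemann-sum limit; Lemma~\ref{lemma:technical_limit}).} This is the heart of the matter, and the step I expect to be the main obstacle. The weights $h_{n-m}$ sum to $\|h\|_1$, so $d_m^N$ is essentially a weighted average of the $c_n^N$ near index $m$; since $\lambda\cdot\bfH$ is bounded and piecewise continuous with its only jump at $s=x$, one shows $d_m^N\to\|h\|_1\,\lambda\cdot\bfH(m/N)$ for indices $m$ with $m/N$ bounded away from $0$, $1$ and $x$, and hence that $N^{-1}\sum_m\Lambda_{\beta(\xi)}\big(\|h\|_1\lambda\cdot\bfH(m/N)\big)$ is a Riemann sum for $\int_0^1\Lambda_{\beta(\xi)}(\|h\|_1\lambda\cdot\bfH(s))\ds=\Lambda(\lambda)$. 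The errors to control are: (i) the over-spill of the convolution past the endpoints, i.e.\ the indices $m\le0$ and $m>N$ (together with the $O(\delta N)$ interior indices within $\delta$ of an endpoint, where the averaging window juts out), which contribute $o(N)$ to $\sum_m|d_m^N|$ because the tails $\sum_{k\ge n}h_k$ and $\sum_{k\le -n}h_k$ vanish and therefore have vanishing Ces\`aro averages; (ii) the discretization/regularization error $\sum_m\big|d_m^N-\|h\|_1\lambda\cdot\bfH(m/N)\big|$, controlled by the modulus of continuity of $\lambda\cdot\bfH$ together with Kronecker's lemma ($N^{-1}\sum_{k=1}^N k\,h_k\to0$, and its negative-index analogue); and (iii) the single jump at $s=x$, a null set for the limiting integral affecting only a negligible proportion of indices. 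On a compact neighbourhood of the $d_m^N$ lying inside $\calD_{\Lambda_{\beta(\xi)}}^\circ$ (available precisely when $\lambda\in\calD_\Lambda^\circ$) the function $\Lambda_{\beta(\xi)}$ is Lipschitz, so these estimates survive composition with $\Lambda_{\beta(\xi)}$; when $\lambda\notin\calD_\Lambda$ one instead checks that $d_m^N$ leaves $\calD_{\Lambda_{\beta(\xi)}}$ for a positive proportion of $m$, so both sides are $+\infty$. Together with Step~1 this gives $\lim\eps\log\Expxi{e^{\eps^{-1}\lambda\cdot\Zeps}}=\Lambda(\lambda)$.

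\textbf{Step 3 (assembling the LDP).} Convexity of $\Lambda$ is inherited from that of $\Lambda_{\beta(\xi)}$ (H\"older, exactly as for $\Lambda_X$ above) composed with the affine map $\lambda\mapsto\|h\|_1\lambda\cdot\bfH(s)$ and then integrated; lower semicontinuity follows from Fatou and lower semicontinuity of $\Lambda_{\beta(\xi)}$; differentiability (indeed smoothness) on $\calD_\Lambda^\circ$ follows from analyticity of the moment generating function in the interior of its domain and differentiation under the integral sign, with $\calD_\Lambda=\Rfour$ when $\beta(\xi)$ has an everywhere-finite moment generating function, and with $0\in\calD_\Lambda^\circ$ in any case since $\Lambda(0)=0$. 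Under the added steepness hypothesis, $\Lambda$ is then essentially smooth in the sense of Definition~\ref{definition:essentially_smooth}, and Theorem~\ref{theorem:GE} gives the LDP for $\Zeps$ with good convex rate function $\Lambda^\ast$. Finally $\ueps=g(\Zeps)$ with $g$ continuous on $\Rone^2\times(0,\infty)^2$; since $\Zeps_4=\int_0^1\Aeps(s)^{-1}\ds$ concentrates away from $0$ (because $\Expxi{\gamma_n}=\|h\|_1\Expxi{\beta}>0$), equivalently since $\Lambda^\ast$ is infinite off the domain of $g$, a routine truncation reduces matters to a compact set on which $g$ is continuous, and the contraction principle (Theorem~\ref{theorem:contraction_principle}) yields the LDP for $\ueps$ with good rate function $\Iueps(\ell)=\inf_{z\in g^{-1}\{\ell\}}\Lambda^\ast(z)$.
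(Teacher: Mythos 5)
Your overall architecture coincides with the paper's: the paper proves this theorem by citing Lemmas \ref{lemma:technical_limit} and \ref{lemma:limiting_cramer_functional} together with G\"artner--Ellis and the contraction principle, and your Step 1 (re-indexing $\sum_n\gamma_nc_n^N=\sum_m\beta_md_m^N$ to restore independence, so that $\eps\log\Expxi{e^{\eps^{-1}\lambda\cdot\Zeps}}=\frac1N\sum_m\Lambda_\beta(d_m^N)$) is exactly the substitution $j=n-k$ in the proof of Lemma \ref{lemma:technical_limit}, while your Step 3 reproduces Lemma \ref{lemma:limiting_cramer_functional} (ii)--(iv). The divergence, and the problem, is in Step 2. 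The paper does \emph{not} prove the limit by a Riemann-sum/modulus-of-continuity argument. It observes that $d_m^N=H_{N,m}$ is itself an expectation $\Exppik{m}{\|h\|_1G(\cdot/N)\one_{[0,N]}}$ against the probability density $\pi_m=h_{\lfloor\cdot-m+1\rfloor}/\|h\|_1$, so Jensen's inequality plus Fubini give the \emph{exact} upper bound $\frac1N\sum_m\Lambda_\beta(d_m^N)\le\int_0^1\Lambda_\beta(\|h\|_1\lambda\cdot\bfH(s))\ds$ for every $N$, with no continuity of $\Lambda_\beta$ required; the matching lower bound comes from Fatou's lemma after constructing an integrable minorant $\psi$ for $\varphi_N$, which is where the standing non-negativity of $\beta$ and of $h$ is used ($\Lambda_\beta(t)\ge\Lambda_\beta(-|t|)$ and the estimate \eqref{eq:log_one_over_M_bound2}).

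The gap in your version is that the dichotomy ``$\lambda\in\calD_\Lambda^\circ$ (Lipschitz estimates apply) or $\lambda\notin\calD_\Lambda$ (a positive proportion of the $d_m^N$ exit $\calD_{\Lambda_\beta}$)'' does not exhaust $\Rfour$ and is not correct where it is invoked. G\"artner--Ellis requires the limit to exist as an extended real for \emph{every} $\lambda$, including $\lambda\in\p\calD_\Lambda$, where $\|h\|_1\lambda\cdot\bfH_\lambda$ can equal $b=\sup\calD_{\Lambda_\beta}$; there $\Lambda_\beta$ is finite but not Lipschitz near the relevant values of $d_m^N$ (its derivative may blow up), so your composition estimate fails. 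Worse, when $\lambda\notin\calD_\Lambda$ because $\Lambda_\beta(\|h\|_1\lambda\cdot\bfH(s))=+\infty$ on a positive-measure set but $\|h\|_1\lambda\cdot\bfH_\lambda\le b$, every $d_m^N$ is a strict sub-probability average of $\|h\|_1\lambda\cdot\bfH$ and hence can remain strictly below $b$ for all $m$ and $N$ (e.g.\ when $h$ has unbounded support); then no $d_m^N$ ever leaves $\calD_{\Lambda_\beta}$, yet one must still show $\frac1N\sum_m\Lambda_\beta(d_m^N)\to+\infty$. That divergence is controlled only by the \emph{rate} at which $\Lambda_\beta(d_m^N)\to\infty$, which is precisely what the paper's Fatou argument with the pointwise limit \eqref{eq:varphi_pointwise_limit2} and the minorant $\psi$ delivers. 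To repair your plan you should replace the Riemann-sum step by (or supplement it with) the Jensen/Fubini upper bound and the Fatou lower bound of Lemma \ref{lemma:technical_limit}; your interior-of-domain argument is then an unnecessary special case.
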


\begin{lemma}
  Let $\gamma_n := \sum_kh_{n-k}\beta_k$ with non-negative $\{h_k\}\in\ell^1$ and non-negative $\beta_k$ \iid with the same law as $\beta$.  For $G\in L^\infty([0,1])$, define
  \begin{align*}
    \SNhat :&= \int_0^1 G(s)\gamma_{\lfloor sN\rfloor}\ds.
  \end{align*}
  Then for $\lambda\in\Rone$ the following limit exists in $(-\infty,\infty]$
  \begin{align*}
    \lim \frac{1}{N}\Lambda(N\SNhat,\lambda) &= \int_0^1\Lambda_\beta(\lambda G(s)\|h\|_1)\ds,
  \end{align*}
  \label{lemma:technical_limit}
  with $\Lambda_\beta$ as in definition \ref{definition:logarithmic_MGF}.
\end{lemma}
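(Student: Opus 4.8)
The plan starts from the observation that $N\SNhat$ is a sum of \emph{independent} random variables. Put $G_N(n):=N\int_{(n-1)/N}^{n/N}G(s)\ds$ for the mean of $G$ over the $n$-th subinterval, with $G_N(n):=0$ for $n\notin\{1,\dots,N\}$. Expanding $\gamma_n$ and interchanging the sums,
\begin{align*}
  N\SNhat &= \sum_{n=1}^N G_N(n)\gamma_n = \sum_{n\in\Zint}G_N(n)\sum_{k\in\Zint}h_{n-k}\beta_k = \sum_{k\in\Zint}c_k^{(N)}\beta_k,\\
  c_k^{(N)} &:= \sum_{n\in\Zint}G_N(n)h_{n-k}.
\end{align*}
Since $|c_k^{(N)}|\le\|G\|_\infty\|h\|_1$ and $\sum_k|c_k^{(N)}|\le N\|G\|_\infty\|h\|_1$, independence of $\{\beta_k\}$ makes the moment generating function factorize over $k$, and the logarithm turns the product into a sum, so in $(-\infty,\infty]$
\begin{align*}
  \frac1N\Lambda(N\SNhat,\lambda) &= \frac1N\sum_{k\in\Zint}\Lambda_\beta\big(\lambda c_k^{(N)}\big) = \int_\Rone\Lambda_\beta\big(\lambda\phi_N(s)\big)\ds,
\end{align*}
where $\phi_N$ is the piecewise-constant function equal to $c_k^{(N)}$ on $((k-1)/N,\,k/N]$. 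With $g_N(s):=G_N(\lceil sN\rceil)$, a change of index gives the exact identity $\phi_N(s)=\sum_{m\in\Zint}h_m\,g_N(s+\tfrac mN)$, and everything reduces to the limit of the last integral. Below, $G$ also denotes its extension by zero off $[0,1]$.

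I would then establish two facts. First, $\phi_N\to\|h\|_1 G$ almost everywhere: for each fixed $m$, $g_N(s+\tfrac mN)$ is the average of $G$ over an interval of length $1/N$ shrinking to $s$, so $g_N(s+\tfrac mN)\to G(s)$ for a.e.\ $s$ by the Lebesgue differentiation theorem; since $|g_N|\le\|G\|_\infty$ and $h\in\ell^1$, dominated convergence in $m$ gives $\phi_N(s)\to\|h\|_1 G(s)$ a.e. Second, a two-sided bound on $\int_\Rone\Lambda_\beta(\lambda\phi_N)\ds$. For the upper bound, apply Jensen's inequality to the convex $\Lambda_\beta$: writing $\lambda\phi_N(s)=\sum_m\tfrac{h_m}{\|h\|_1}\big(\|h\|_1\lambda\,g_N(s+\tfrac mN)\big)$ as an average against the probability weights $h_m/\|h\|_1$ gives $\Lambda_\beta(\lambda\phi_N(s))\le\sum_m\tfrac{h_m}{\|h\|_1}\Lambda_\beta(\|h\|_1\lambda\,g_N(s+\tfrac mN))$; integrating over $s$ and collapsing the $m$-sum by translation invariance of Lebesgue measure yields $\int_\Rone\Lambda_\beta(\lambda\phi_N)\ds\le\int_0^1\Lambda_\beta(\|h\|_1\lambda\,g_N)\ds$, and a second Jensen step ($g_N$ being itself a subinterval average of $G$) gives $\int_0^1\Lambda_\beta(\|h\|_1\lambda\,g_N)\ds\le\int_0^1\Lambda_\beta(\|h\|_1\lambda\,G(s))\ds$; hence $\limsup_N\int_\Rone\Lambda_\beta(\lambda\phi_N)\ds\le\int_0^1\Lambda_\beta(\lambda\|h\|_1 G(s))\ds$. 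For the lower bound, split $\int_\Rone=\int_{[0,1]}+\int_{\Rone\setminus[0,1]}$: on $[0,1]$, Fatou's lemma together with lower semicontinuity of $\Lambda_\beta$ (which is minorized by the affine map $t\mapsto t\,\E\beta$) and the a.e.\ convergence give $\liminf_N\int_{[0,1]}\Lambda_\beta(\lambda\phi_N)\ds\ge\int_0^1\Lambda_\beta(\lambda\|h\|_1 G)\ds$; on $\Rone\setminus[0,1]$, $\Lambda_\beta(\lambda\phi_N)\ge\lambda\phi_N\,\E\beta$ pointwise, while $\int_{\Rone\setminus[0,1]}|\phi_N|=\tfrac1N\sum_{k\notin\{1,\dots,N\}}|c_k^{(N)}|\to0$, so that piece has $\liminf\ge0$. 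Combining the bounds gives the asserted limit in $(-\infty,\infty]$.

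The step I expect to demand the most care is passing to the limit inside $\Lambda_\beta$. Because $\Lambda_\beta$ may take the value $+\infty$ on a half-line and is only lower semicontinuous, dominated convergence is unavailable; one is instead forced through the convexity/Jensen estimate for the upper bound and Fatou plus lower semicontinuity for the lower bound, which is precisely what keeps the identity valid in $(-\infty,\infty]$. A secondary technical point is the bookkeeping of the infinitely many indices $k\notin\{1,\dots,N\}$ that survive because $h$ need not be finitely supported: the claim $\int_{\Rone\setminus[0,1]}|\phi_N|\to0$ reduces to $\tfrac1N\sum_{j} h_j\min(|j|,N)\to0$, which holds by dominated convergence since $h\in\ell^1$.
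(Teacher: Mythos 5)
Your proposal is correct and follows essentially the same route as the paper's proof: the same rewriting of $N\SNhat=\sum_k c_k^{(N)}\beta_k$ (your $c_k^{(N)}$ is exactly the paper's $H_{N,k}$), the same a.e.\ limit via the Lebesgue differentiation theorem, and the same Jensen-plus-translation-invariance argument for the upper bound. The only divergence is in the organization of the Fatou step: the paper builds a single integrable minorant $\psi$ on all of $\Rone$ from tail sums of $h$, while you split off $\Rone\setminus[0,1]$ and use $\int_{\Rone\setminus[0,1]}|\phi_N|\to0$ together with the affine minorant $t\mapsto t\,\E\beta$ --- a step which, exactly like the paper's corresponding estimate $\E|1-e^{-|\lambda H_{N,k}|\beta}|\le|\lambda H_{N,k}|$, tacitly assumes $\E\beta<\infty$.
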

\begin{proof}[Proof of lemma \ref{lemma:technical_limit}]
For a result involving sums of convolved stationary random variables see \cite{Burton_Large_1990}.  The key difference is that here we allow the moment generating function to be infinite, and the term $G(s)$ makes the sum non-stationary.  This necessitates a new proof technique that relies on non-negativity of the $\beta_k$ and the $h_k$.

\begin{align*}
  N\SNhat = N\int_0^1G(s)\gamma_{\lfloor sN\rfloor}\ds &= \sum_{n=1}^N\left[ \int_{n-1}^nG(s/N)\ds \right]\gamma_n\\
  &= \sum_{n=1}^N\left[ \int_{n-1}^nG(s/N)\ds \right]\sum_{k\in\Zint}\beta_kh_{n-k}.
\end{align*}
Making the substitution $j=n-k$ we have 
\begin{align*}
  N\SNhat&= \sum_{k\in\Zint}\beta_kH_{N,k},\quad H_{N,k} := \sum_{j=1-k}^{N-k}h_j\int_{j+k-1}^{j+k}G(s/N)\ds.
\end{align*}
This is a sum of independent random variables.  Since $\beta_k\sim\beta$,
\begin{align*}
  \frac{1}{N}\Lambda(N\SNhat,\lambda) &= \frac{1}{N}\sum_{k\in\Zint}\Lambda_\beta(\lambda H_{N,k}) =\frac{1}{N}\int_{-\infty}^\infty \Lambda_\beta(\lambda H_{N,\lfloor s\rfloor})\ds.
\end{align*}
Using \eqref{eq:chi_squared_MGF} and changing $s\mapsto Ns$ we have
\begin{align*}
  \frac{1}{N}\Lambda(N\SNhat,\lambda) &= \int_{-\infty}^\infty \varphi_N(s)\ds,\qquad
  \varphi_N(s) := \Lambda_\beta(\lambda H_{N,\lfloor sN\rfloor}).
\end{align*}
The key to our proof is the fact that $H_{N,k}$ itself can be written as an expectation
\begin{align*}
  H_{N,k} &= \int_0^NG(s/N)h_{\lfloor s-k+1\rfloor}\ds = \Exppik{k}{\|h\|_1G(\cdot/N)\one_{[0,N]}},\\
  \Exppik{k}{f(S)} :&= \int_{-\infty}^\infty f(s)\frac{h_{\lfloor s-k+1\rfloor}}{\|h\|_1}\ds.
\end{align*}

Since $\pi_k$ is a density that concentrates near $s=k-1$, for almost every $s\in[0,1]$, $\Exppik{\lfloor sN\rfloor}{\|h\|_1G(\cdot/N)\one_{[0,N]}}\to \|h\|_1 G(s)$ (this is a result on the Lebesgue set for integrable functions, see e.g. \cite{Folland_RealAnalysis} theorem 3.20 or \cite{Dembo_Large} theorem C.13).  Also,
\begin{align}
  |H_{N,\lfloor sN\rfloor}|&\leq \|G\|_\infty \sum_{j=\lfloor 1-sN\rfloor}^{\lfloor N+1-sN\rfloor}h_j.
  \label{eq:HNk_bound}
\end{align}
Since this tends to zero for $s\notin[0,1]$, we have
\begin{align}
  \varphi_N(s) &\to\varphi(s):=\left\{
  \begin{matrix}
    \Lambda_\beta\left(\lambda\|h\|_1G(s) \right),& \emph{a.e.}\,\, s\in[0,1],\\
    0,& \emph{a.e.}\,\, s\notin[0,1].
  \end{matrix}
  \right.
  \label{eq:varphi_pointwise_limit2}
\end{align}
To prove the lemma, we thus have to show $\int\varphi_N\to\int\varphi$.

We first obtain an upper bound in cases where $\Lambda(\lambda)<\infty$.  When $\Lambda(\lambda)=\infty$, the lower bound we derive will be infinite, and thus the upper bound as well.
\begin{align}
  \begin{split}
    \int_{-\infty}^\infty\varphi_N(s)\ds &= \int_{-\infty}^\infty\Lambda_\beta\left( \Exppik{\lfloor sN\rfloor}{ \lambda\|h\|_1G(\cdot/N)\one_{[0,N]}} \right)\ds\\
    &\leq \int_{-\infty}^\infty \Exppik{\lfloor sN\rfloor}{\Lambda_\beta\left( \lambda\|h\|_1G(\cdot/N) \right)}\ds\\
    &= \int_{-\infty}^\infty\left[\int_0^N\Lambda_\beta\left( \lambda\|h\|_1G(t/N) \right)\pi_{\lfloor sN\rfloor}(t)\dt \right]\ds\\
    &= \frac{1}{N}\int_{-\infty}^\infty\left[\int_0^N\Lambda_\beta\left( \lambda\|h\|_1G(t/N) \right)\pi_{\lfloor s\rfloor}(t)\dt \right]\ds\\
    &= \int_{-\infty}^\infty\left[\int_0^1\Lambda_\beta(\lambda\|h\|_1G(t))\pi_{\lfloor s\rfloor}(tN)\dt \right]\ds\\
    &= \int_0^1 \Lambda_\beta(\lambda\|h\|_1G(t))\left( \int_{-\infty}^\infty \pi_{\lfloor s\rfloor}(tN)\ds \right) \dt\\
    &= \int_0^1 \Lambda_\beta(\lambda\|h\|_1G(t))\dt
    = \int_0^1 \varphi(t)\dt.
  \end{split}
  \label{eq:integral_upper_bound2}
\end{align}
Above, the inequality is due to Jensen's inequality and the convexity of $\Lambda_\beta$.  The change of integration order is justified by Fubini since $\varphi\in L^1$.

We now obtain a lower bound.  We will show that $\varphi_N\geq\psi$, with $\psi\in L^1$.  Then a corollary of Fatou's lemma gives us
\begin{align}
  \liminf \int \varphi_N(s)\ds &\geq \int \varphi(s)\ds = \Lambda(\lambda).
  \label{eq:Fatou}
\end{align}
With $M(t):=\Exp{e^{t\beta}}$, we note that
\begin{align}
  \varphi_N(s) &=\log M(\lambda H_{N,\lfloor sN\rfloor}) \geq \log M(-|\lambda H_{N,\lfloor sN\rfloor}|) = -\log 1/M(-|\lambda H_{N,\lfloor sN\rfloor}|).
  \label{eq:varphi_N_lower_bound}
\end{align}
Also, using $M(-|\lambda H_{N,\lfloor sN\rfloor}|)<1$ and \eqref{eq:HNk_bound} we have
\begin{align}
  \begin{split}
    &\log 1/M(-|\lambda H_{N,\lfloor sN\rfloor}|) = \log\left( 1 + \frac{1 - M(-|\lambda H_{N,\lfloor sN\rfloor}|)}{M(-|\lambda H_{N,\lfloor sN\rfloor}|)} \right)\\
    &\quad\leq \frac{|1-M(-|\lambda H_{N,\lfloor sN\rfloor}|)|}{M(-|\lambda H_{N,\lfloor sN\rfloor}|)}
    \leq \frac{|1-M(-|\lambda H_{N,\lfloor sN\rfloor}|)|}{e^{\Lambda_\beta(-\lambda\|h\|_1\|G\|_\infty)}}\\
    &\quad\leq \frac{\E|1-e^{-|\lambda H_{N,\lfloor sN\rfloor}|}|}{e^{\Lambda_\beta(-\lambda\|h\|_1\|G\|_\infty)}}
    \leq \frac{|\lambda H_{N,\lfloor sN\rfloor}|}{e^{\Lambda_\beta(-\lambda\|h\|_1\|G\|_\infty)}}\\
    &\leq e^{-\Lambda_\beta(-\lambda\|h\|_1\|G\|_\infty)}\|G\|_\infty \sum_{j=\lfloor 1-sN\rfloor}^{\lfloor N+1-sN\rfloor}h_j.
  \end{split}
  \label{eq:log_one_over_M_bound2}
\end{align}

So,
\begin{align}
  \log 1/M(-|\lambda H_{N,\lfloor sN\rfloor}|) &\leq e^{-\Lambda_\beta(-\lambda\|h\|_1\|G\|_\infty)}\|G\|_\infty \|h\|_1.
  \label{eq:psi_bound1}
\end{align}
This bound works for all $s$.  However, when $s$ is away from $[0,1]$ the summation is over the tails of $h_j$ and we can do better.  Specifically there exists $N_0$, $N_1$ such that for $N\geq N_1\geq N_0$ and $s\notin[-1,2]$
\begin{align*}
  \sum_{j=\lfloor 1-sN\rfloor}^{\lfloor N+1-sN\rfloor}h_j &\leq \sum_{j=\lfloor 1-sN_0\rfloor}^{\lfloor N_0+1-sN_0\rfloor}h_j.
\end{align*}
Therefore, using \eqref{eq:log_one_over_M_bound2}, for $s\notin[-1,2]$, $N>N_1$,
\begin{align}
  \log1/M(-|\lambda H_{N,\lfloor sN\rfloor}|) &\leq e^{-\Lambda_\beta(-\lambda\|h\|_1\|G\|_\infty)}\|G\|_\infty \sum_{j=\lfloor 1-sN_0\rfloor}^{\lfloor N_0+1-sN_0\rfloor}h_j.
  \label{eq:psi_bound2}
\end{align}
So define
\begin{align*}
  \psi(s) :&= \displaystyle\left\{
  \begin{matrix}
  -e^{-\Lambda_\beta(-\lambda\|h\|_1\|G\|_\infty)}\|G\|_\infty \|h\|_1,& \quad s\in[-1,2]\\
  -e^{-\Lambda_\beta(-\lambda\|h\|_1\|G\|_\infty)}\|G\|_\infty \sum_{j=\lfloor 1-sN_0\rfloor}^{\lfloor N_0+1-sN_0\rfloor}h_j,&\quad s\notin[-1,2].
  \end{matrix}
  \right.
\end{align*}
Then for $N>N_1$, \eqref{eq:varphi_N_lower_bound}, \eqref{eq:psi_bound1}, and \eqref{eq:psi_bound2} show that $\varphi_N\geq\psi$.  To show $\psi\in L^1$, we note that
\begin{align*}
  \int_{s\notin[-1,2]}|\psi(s)|\ds &\leq C\int_{-\infty}^\infty \sum_{j=\lfloor 1-sN_0\rfloor}^{\lfloor N_0+1-sN_0\rfloor}h_j\ds = \frac{C}{N_0}\int_{-\infty}^\infty \sum_{j=\lfloor 1-s\rfloor}^{\lfloor N_0+1-s\rfloor}h_j\ds\\
  &= C\|h\|_1<\infty.
\end{align*}
We thus obtain \eqref{eq:Fatou} and the proof is complete.
\end{proof}
Lemma \ref{lemma:technical_limit} allows us to obtain a limiting Cram\'er functional for the convolved media.
\begin{lemma}
  With $\bfH$ defined by \eqref{eq:Zeps_definition} set
  \begin{align*}
    \Lambda(\lambda):&= \int_0^1\Lambda_\beta(\|h\|_1\lambda\cdot \bfH(s))\ds
  \end{align*}
  Then 
  \begin{enumerate}
    \item[(i)] 
      Restricting attention to $\eps$ such that $\eps^{-1}\in\Nat$,
      \begin{align*}
        \lim_{\eps\to0}\eps\log\Exi e^{\eps^{-1}\lambda\cdot\Zeps} = \Lambda(\lambda).
      \end{align*}
    \item[(ii)] $\Lambda$ is lower semicontinuous.
    \item[(iii)] Let $b\in(-\infty,\infty]$ be the number satisfying $\{\lambda\st \E\exp\{\lambda \beta\}<\infty\}=(-\infty,b)$ or $=(-\infty,b]$, and let 
    $\bfH_\lambda$ be the value of $\bfH(s)$ that maximizes $\lambda\cdot\bfH(s)$.  Then 
      \begin{align*}
        \{\lambda\st \|h\|_1\lambda\cdot \bfH_\lambda< b\}&\subset\calD_\Lambda\subset\{\lambda\st \|h\|_1\lambda\cdot \bfH_\lambda\leq b\},
      \end{align*}
    with $\Lambda'$ finite throughout $\calD_\Lambda^\circ$.  
  \item[(iv)] If $\Lambda$ is steep, then $\Lambda$ is essentially-smooth.
  \end{enumerate}
  \label{lemma:limiting_cramer_functional}
\end{lemma}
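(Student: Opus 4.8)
The plan is to deduce (i) directly from Lemma~\ref{lemma:technical_limit}, obtain (ii) from lower semicontinuity of the scalar log-moment generating function plus Fatou's lemma, do the real work in (iii) by locating where the integrand $\Lambda_\beta(\|h\|_1\lambda\cdot\bfH(s))$ is finite and then differentiating under the integral, and assemble (iv) from (iii) together with Definition~\ref{definition:essentially_smooth}. Throughout I use the standard properties of $\Lambda_\beta$: it is convex, lower semicontinuous, finite on $(-\infty,0]$ (because $\beta\ge0$), smooth with continuous derivative on the interior $\calD_{\Lambda_\beta}^\circ=(-\infty,b)$ of its domain, and bounded below on every bounded interval. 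For (i), conditioning on $\xi$ makes $\{\beta_m\}$ an i.i.d.\ family with a fixed law $\beta(\xi)$; by \eqref{eq:Aeps_convolved} one has $\Aeps(s)^{-1}=\gamma_{\lfloor s/\eps\rfloor+1}$, so, writing $N:=\eps^{-1}\in\Nat$ and $G:=\lambda\cdot\bfH$ (which lies in $L^\infty$ since $F$, hence $\bfH$ of \eqref{eq:Zeps_definition}, is continuous on $[0,1]$ when $\|f\|_{L^2}<\infty$), the quantity $\lambda\cdot\Zeps$ is exactly the object $\SNhat$ of Lemma~\ref{lemma:technical_limit}, the shift of the $\gamma$-index being absorbed by stationarity of the convolved sequence. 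That lemma, applied with scalar parameter $1$ and $\beta$ replaced by $\beta(\xi)$, gives $\eps\log\Exi e^{\eps^{-1}\lambda\cdot\Zeps}=\frac1N\log\Exi e^{N\SNhat}\to\int_0^1\Lambda_\beta(\|h\|_1\lambda\cdot\bfH(s))\,\ds=\Lambda(\lambda)$, which is (i).

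For (ii), $\lambda\mapsto\|h\|_1\lambda\cdot\bfH(s)$ is affine and $\Lambda_\beta$ is convex, so $\Lambda$ is convex, being an integral of convex functions. For lower semicontinuity, take $\lambda_n\to\lambda$; by lower semicontinuity of $\Lambda_\beta$, $\liminf_n\Lambda_\beta(\|h\|_1\lambda_n\cdot\bfH(s))\ge\Lambda_\beta(\|h\|_1\lambda\cdot\bfH(s))$ for each $s$. The arguments $\|h\|_1\lambda_n\cdot\bfH(s)$ lie in one fixed bounded interval for all $s\in[0,1]$ and all $\lambda_n$ near $\lambda$, so $\Lambda_\beta$ of them is bounded below by a constant $-C$; applying Fatou's lemma to the nonnegative integrands $\Lambda_\beta(\|h\|_1\lambda_n\cdot\bfH(s))+C$ yields $\liminf_n\Lambda(\lambda_n)\ge\Lambda(\lambda)$.

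Part (iii) carries the weight. Put $m(\lambda):=\sup_{s\in[0,1]}\lambda\cdot\bfH(s)=\lambda\cdot\bfH_\lambda$; being a finite supremum of linear functionals of $\lambda$ it is finite-valued, convex, and continuous, and since $s\mapsto\lambda\cdot\bfH(s)$ is piecewise continuous with a single jump (at $x$), for every $t<m(\lambda)$ the superlevel set $\{s:\lambda\cdot\bfH(s)>t\}$ has positive Lebesgue measure. Now: if $\|h\|_1 m(\lambda)<b$, then $s\mapsto\|h\|_1\lambda\cdot\bfH(s)$ takes values in a compact subset of $(-\infty,b)=\calD_{\Lambda_\beta}^\circ$, on which $\Lambda_\beta$ is continuous and hence bounded, so $\Lambda(\lambda)<\infty$; while if $\|h\|_1 m(\lambda)>b$, then $\|h\|_1\lambda\cdot\bfH(s)>b$ on a set of positive measure where $\Lambda_\beta=+\infty$, so $\Lambda(\lambda)=\infty$. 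This gives the sandwich $\{\lambda:\|h\|_1 m(\lambda)<b\}\subset\calD_\Lambda\subset\{\lambda:\|h\|_1 m(\lambda)\le b\}$. The strict set is nonempty — it contains $0$ when $b>0$, and $\lambda=(0,0,-1,-1)$ when $b=0$, since then $m(\lambda)=-1<0=b$ using $\bfH(s)=(F(s),F(s),1,1)$ for $s<x$ and $(0,F(s),0,1)$ for $s>x$ from \eqref{eq:Zeps_definition} — so by the standard fact that the interior of a sublevel set $\{m\le c\}$ of a finite continuous convex function equals the strict sublevel set $\{m<c\}$ whenever the latter is nonempty, we conclude $\calD_\Lambda^\circ=\{\lambda:\|h\|_1 m(\lambda)<b\}$. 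Finally, on $\calD_\Lambda^\circ$ one differentiates under the integral: around any $\lambda_0\in\calD_\Lambda^\circ$ a small closed ball $\overline B$ still satisfies $\sup_{\lambda\in\overline B}\|h\|_1 m(\lambda)<b$, so the arguments $\|h\|_1\lambda\cdot\bfH(s)$ stay in a fixed compact $K\subset\calD_{\Lambda_\beta}^\circ$ for $\lambda\in\overline B$, $s\in[0,1]$, where $\Lambda_\beta'$ is bounded; since $\bfH$ is bounded too, dominated convergence applies and $\nabla\Lambda(\lambda_0)=\int_0^1\Lambda_\beta'(\|h\|_1\lambda_0\cdot\bfH(s))\,\|h\|_1\bfH(s)\,\ds$ is finite. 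The main obstacle throughout is precisely this boundary bookkeeping: whether the endpoint $b$ of $\calD_{\Lambda_\beta}$ is attained, and the fact that $\bfH(s)$ is nonconstant with a jump at $x$, which is exactly why $\calD_\Lambda$ can only be pinned down up to its boundary.

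For (iv), $\Lambda$ is convex by (ii); $\calD_\Lambda^\circ$ is nonempty by the computation just made (it contains $0$ when $b>0$ and $(0,0,-1,-1)$ when $b=0$); $\Lambda$ is differentiable throughout $\calD_\Lambda^\circ$ by (iii); and steepness is the standing hypothesis. By Definition~\ref{definition:essentially_smooth} these are exactly the requirements for $\Lambda$ to be essentially smooth, which, with (ii), is what is needed to invoke the G\"artner-Ellis theorem (Theorem~\ref{theorem:GE}) in the proof of Theorem~\ref{theorem:LDP_for_convolved_media}.
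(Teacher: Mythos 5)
Your proposal is correct and follows essentially the same route as the paper's proof: part (i) by invoking Lemma~\ref{lemma:technical_limit} with $G=\lambda\cdot\bfH$, part (ii) by Fatou's lemma together with lower semicontinuity of $\Lambda_\beta$, part (iii) by comparing $\|h\|_1\lambda\cdot\bfH_\lambda$ with $b$, and part (iv) by assembling the definition of essential smoothness. You supply more detail than the paper on (iii) --- explicitly identifying $\calD_\Lambda^\circ$ as the strict sublevel set and justifying differentiation under the integral by dominated convergence --- but this is a refinement of the same argument, not a different one.
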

\begin{proof}[Proof of lemma \ref{lemma:limiting_cramer_functional}]
  Write
  \begin{align*}
    \eps\log\Exi e^{\eps^{-1}\lambda\cdot\Zeps} :&= \eps\log\Expxi{\exp\left\{\eps^{-1}\int_0^1\frac{\lambda\cdot \bfH(s)}{\Aeps(s)}ds}\right\}.
  \end{align*}
  Applying lemma \ref{lemma:technical_limit} with $\lambda=1$ and $G = \lambda\cdot \bfH$ we obtain that $\Lambda$ is indeed the form for the limiting Cram\'er functional, thus proving (i).  

  To prove lower semi-continuity, we note that whenever $\lambda_n\to\lambda$ and $\Lambda(\lambda_n)\leq\alpha$ then
  \begin{align*}
    \alpha&\geq \lim\inf\Lambda(\lambda_n)=\lim\inf\int_0^1\Lambda_\beta(\|h\|_1\lambda_n\cdot \bfH(s))\ds \\
    &\geq \int_0^1\Lambda_\beta(\|h\|_1\lambda\cdot \bfH(s)) = \Lambda( \lambda),
  \end{align*}
  by a corollary of Fatou's lemma (using the fact that the integrand is bounded below).

  To show (iii), suppose first $\|h\|_1\lambda\cdot \bfH_\lambda<b$.  Then $\Lambda_\beta(\|h\|_1\lambda\cdot \bfH(s))$ is bounded and differentiable and therefore $\Lambda(\lambda)$ is too.  If on the other hand $b<\|h\|_1\lambda\cdot \bfH_\lambda$, then since $\bfH$ is continuous, $\Lambda_\beta(\|h\|_1\lambda\cdot \bfH(s))$ equals $+\infty$ on a set of positive measure, hence $\Lambda(\lambda)=\infty$.  We thus have our bounds on $\calD_\Lambda$ and it follows that $\calD_\Lambda^\circ$ is non-empty and $\Lambda'$ exists in $\calD_\Lambda^\circ$.

  Lastly, (iv) follows from (ii), the assumption of steepness, (iii), and the definition of essential smoothness.
  
\end{proof}

As hinted at by lemma \ref{lemma:limiting_cramer_functional}, steepness of $\Lambda$ is a condition that needs extra work to check.  We formulate a necessary and sufficient condition below, and then three sufficient conditions that are easy to check.
\begin{proposition}[Steepness criteria]
  Extend $\Lambda_\beta'$ to map $\Rone\to[0,\infty]$ by setting $\Lambda_\beta'(t)=\infty$ whenever $\Lambda_\beta(t)=\infty$.  Then define $K_i:\Rtwo\to\Rone$ by
  \begin{align*}
    K_1(\eta) :&= \int_0^x\Lambda_\beta'(F(s)\eta_1 + \eta_2)\ds,\quad K_2(\eta) := \int_x^1\Lambda_\beta'(F(s)\eta_1 + \eta_2)\ds.
  \end{align*}
  Then $\Lambda$ defined in lemma \ref{lemma:limiting_cramer_functional} is steep if and only if $\Lambda_\beta$ is steep and for every $\eta\in\p\calD_{K_i}^\circ$, $K_i(\eta)=\infty$, $i=1,2$.

  Moreover, with $F_M:=\max_sF(s)$, $F_m:= \min_sF(s)$, $\Lambda$ is steep whenever one of the following sufficient conditions hold:
  \begin{enumerate}
    \item $\Lambda_\beta$ is finite everywhere
    \item $F$ is piecewise $C^2$ and $\calD_{\Lambda_\beta}=(-\infty,b)$.
    \item Let $\{s_1,\dots,s_n\}$ be the points where $F(s_i)=F_M$ or $F(s_i)=F_m$.  Then there exist neighborhoods $N_i = (s_i-\delta,s_i+\delta)\cap\{s\st F_m<F(s)<F_M\}$ such that on $N_i$, $F$ admits an expansion of the form
      \begin{align*}
        F(s) :&= F(s_i) + c(s-s_i)^r + R(s-s_i),\\
        \frac{R(s-s_i)}{(s-s_i)^r}&\to0,\quad \frac{R'(s-s_i)}{(s-s_i)^{r-1}}\to0,\quad N_i\ni s\to s_i.
      \end{align*}
      Then with $r:=\min\{r_1,\dots,r_n\}$, $\Lambda$, $b=\p\calD_{\Lambda_\beta}$, $\Lambda$ is steep if 
      \begin{align*}
        \int_{b-1}^b\frac{\Lambda_\beta'(t)}{(b-t)^{(r-1)/r}}\dt = \infty.
      \end{align*}
  \end{enumerate}
  \label{proposition:steepness_criteria}
\end{proposition}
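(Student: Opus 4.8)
The plan is to peel the four-dimensional steepness question down to one dimension. Since $\lambda\cdot\bfH(s)$ equals $(\lambda_1+\lambda_2)F(s)+(\lambda_3+\lambda_4)$ for $s\in(0,x)$ and $\lambda_2F(s)+\lambda_4$ for $s\in(x,1)$, the invertible linear map $T:\lambda\mapsto\|h\|_1(\lambda_1+\lambda_2,\lambda_3+\lambda_4,\lambda_2,\lambda_4)=:(\mu,\nu)$ turns $\Lambda$ into the separable sum $\tilde L_1(\mu)+\tilde L_2(\nu)$, where $\tilde L_1(\mu):=\int_0^x\Lambda_\beta(\mu_1F(s)+\mu_2)\ds$, $\tilde L_2(\nu):=\int_x^1\Lambda_\beta(\nu_1F(s)+\nu_2)\ds$, and $K_i$ is the second partial of $\tilde L_i$ on the interior of its domain. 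Steepness is invariant under an invertible linear change of variables (since $\nabla(\Psi\circ T)=T^\top(\nabla\Psi)\circ T$ with $T^\top$ bounded above and below, and $T$ maps boundary-of-interior to boundary-of-interior), and the effective domain of a separable sum is a product, whose boundary is $\p(A^\circ\times B^\circ)=(\p A^\circ\times\overline{B^\circ})\cup(\overline{A^\circ}\times\p B^\circ)$. Hence $\Lambda$ is steep iff $\tilde L_1$ and $\tilde L_2$ both are: for the reverse implication, a sequence approaching the boundary of the product has one block approaching the boundary of its own factor's interior while staying inside it and the gradient is block-diagonal; for the forward implication, freeze the other block at an interior value.

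\textbf{The two-dimensional piece.} It thus suffices to treat one piece; write $\tilde L:=\tilde L_1$, $I:=[0,x]$, $K:=K_1$, $b:=\sup\calD_{\Lambda_\beta}$, $C_F:=\sup_I|F|$. Because $\beta\ge0$ we have $\Lambda_\beta'\ge0$, so differentiating under the integral (valid on $\calD_{\tilde L}^\circ$ by monotone convergence) gives $\nabla\tilde L(\mu)=\bigl(\int_IF\Lambda_\beta'(\mu_1F+\mu_2)\ds,\,K(\mu)\bigr)$, hence $K(\mu)\le|\nabla\tilde L(\mu)|\le\sqrt{1+C_F^2}\,K(\mu)$ on $\calD_{\tilde L}^\circ$. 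An elementary argument using continuity of $F$ identifies $\calD_{\tilde L}^\circ=\calD_K^\circ=\{\mu\st\sup_{s\in I}(\mu_1F(s)+\mu_2)<b\}$, and $K$ is lower semicontinuous on $\Rtwo$ since the extended $\Lambda_\beta'$ is increasing and left continuous (hence lsc) and Fatou applies. Combining: if $K\equiv\infty$ on $\p\calD_K^\circ$ then $\liminf_jK(\mu_j)\ge K(\mu^*)=\infty$ whenever $\mu_j\to\mu^*\in\p\calD_K^\circ$, so $|\nabla\tilde L(\mu_j)|\to\infty$ and $\tilde L$ is steep; conversely, if $K(\mu^*)<\infty$ at some $\mu^*\in\p\calD_K^\circ$, then $\mu_j:=\mu^*-\tfrac1je_2\in\calD_{\tilde L}^\circ$ and $K(\mu_j)\uparrow K(\mu^*)<\infty$ by monotone convergence, contradicting steepness. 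Finally, ``$\Lambda_\beta$ steep'' is necessary for free: if $\Lambda_\beta$ is not steep then $\Lambda_\beta'$ is bounded on $(-\infty,b]$, so $K$ is bounded on $\calD_K$ and $\tilde L$ cannot be steep. This gives the stated equivalence.

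\textbf{The sufficient conditions.} For these one only needs $K_i\equiv\infty$ on $\p\calD_{K_i}^\circ$. Condition 1 is vacuous since then $\calD_\Lambda=\Rfour$ has no boundary. For conditions 2 and 3, at $\mu^*\in\p\calD_{K_i}^\circ$ one has $\sup_{s\in I}(\mu_1^*F(s)+\mu_2^*)=b$, attained (for $\mu_1^*\ne0$) at a maximum of $F|_I$ if $\mu_1^*>0$ and a minimum if $\mu_1^*<0$; if that extreme value is attained on a positive-measure set, then $\Lambda_\beta'$ equals its value at $b$ there, which is $\infty$ under ``$\Lambda_\beta$ steep'', so $K_i(\mu^*)=\infty$. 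Otherwise, near an extremal point $s_i$, substitute $t=\mu_1^*F(s)+\mu_2^*$; the expansion $F(s)=F(s_i)+c(s-s_i)^r+R(s-s_i)$ with the stated control on $R$ and $R'$ yields $b-t\asymp|s-s_i|^r$ and $\ds\asymp(b-t)^{1/r-1}\dt$, so $K_i(\mu^*)\ge c_{\mu^*}\int^b\Lambda_\beta'(t)(b-t)^{-(r-1)/r}\dt$ for some $c_{\mu^*}>0$, which is the integral in condition 3 (using the smallest $r$ among the $s_i$, since a larger exponent only makes the integrand more singular). Condition 2 is the case where piecewise-$C^2$ regularity forces such an expansion with a finite $r$ (indeed $r\le2$ at nondegenerate critical points, $r=1$ at endpoints and corners where $F'\ne0$), while $\calD_{\Lambda_\beta}=(-\infty,b)$ makes $\Lambda_\beta(b)=\infty$, so $\int^b\Lambda_\beta'(t)(b-t)^{-(r-1)/r}\dt\ge\int_{b-1}^b\Lambda_\beta'(t)\dt=\Lambda_\beta(b)-\Lambda_\beta(b-1)=\infty$, using $(b-t)^{-(r-1)/r}\ge1$ on $[b-1,b]$.

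\textbf{Main obstacle.} The delicate part is the two-dimensional step: proving $\calD_{\tilde L}^\circ=\calD_K^\circ$ and, above all, the ``only if'' direction --- that steepness of $\tilde L$ forces $K\equiv+\infty$ on the \emph{whole} boundary, not merely that $|\nabla\tilde L|$ blows up along generic approaching sequences --- where the monotone-convergence sequence $\mu^*-\tfrac1je_2$ is the key device. A lesser technical nuisance is tracking the remainder $R$ through the change of variables so that $\ds/\dt$ genuinely behaves like $(b-t)^{1/r-1}$.
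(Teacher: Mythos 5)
Your proposal is correct and follows essentially the same route as the paper: both reduce steepness to divergence of the integral $\int\Lambda_\beta'$ at boundary points by sandwiching $|\nabla\Lambda|$ between constant multiples of that integral, use Fatou/lower semicontinuity for the ``steep'' direction and a monotone approach to the boundary for the converse (the paper uses the ray $t\lambda$, $t\uparrow1$, where you use $\mu^*-\tfrac1j e_2$), and then obtain the sufficient conditions via the same substitution $t=F(s)\eta_1+\eta_2$ near the extremal points of $F$. The only differences are organizational (you perform the linear change of variables to the separable form first rather than last), and you state the condition-2 comparison with the correct inequality direction, $(b-t)^{-(r-1)/r}\geq1$ on $[b-1,b]$, where the paper's displayed inequality has the sign reversed.
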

\begin{remark}
  Since $\beta\geq0$, we have $\Lambda_\beta'\geq0$, and therefore $K_i:[0,\infty]$ are well defined as integrals of functions taking values in $[0,\infty]$.
\end{remark}
\begin{proof}
  We note that the first condition ``$\Lambda_\beta$ is finite everywhere'' trivially implies that $\Lambda$ is steep.  
  If $\Lambda_\beta$ is not steep, then it is easy to construct an example showing that $\Lambda$ is not steep either.  So from now on we assume $\Lambda_\beta$ is steep but $\calD_{\Lambda_\beta}\neq\Rone$.  
  
  We now show the necessary and sufficient condition involving the $K_i$.  Recall
  \begin{align*}
    \bfH :&= (F\one_{(0,x)},\, F,\, \one_{(0,x)},\, 1).
  \end{align*}
  To show steepness we must fix $\lambda\in\p\calD_\Lambda^\circ$, let $\calD_\Lambda^\circ\ni\lambda^n\to\lambda$ and show $|\nabla\Lambda(\lambda^n)|\to\infty$.  

  Define the function $\Gamma:\Rfour\to[0,\infty]$ by
  \begin{align*}
    \Gamma(\lambda) :&= \int_0^1\Lambda_\beta'(\|h\|_1\lambda\cdot\bfH(s))\ds.
  \end{align*}
  We claim that $\Lambda$ is steep if and only if $\Gamma(\lambda)=\infty$ for all $\lambda\in\p\calD_\Lambda^\circ$.  Indeed, if $\Gamma(\lambda)<\infty$ for some $\lambda\in\p\calD_\Lambda^\circ$, then since for $t\in(0,1)$ $t\mapsto\Lambda_\beta'(\|h\|_1t\lambda\cdot\bfH(s))$ is finite and non-decreasing (by convexity), 
  \begin{align*}
    |\nabla\Lambda(t\lambda)|&\leq \||\bfH|\|_{L^\infty}\Gamma(t\lambda)\leq \||\bfH|\|_{L^\infty}\Gamma(\lambda)<\infty,
  \end{align*}
  and therefore $\Lambda$ is not steep.  
  On the other hand suppose $\Gamma(\lambda)=\infty$ for all boundary points $\lambda$, then choose one along with a sequence $\calD_\Lambda^\circ\ni\lambda^n\to\lambda$, then by Fatou
  \begin{align*}
    \lim\inf|\nabla\Lambda(\lambda^n)|&\geq \lim\inf\Gamma(\lambda^n)\geq \Gamma(\lambda)=\infty.
  \end{align*}
  Therefore $\Lambda$ is steep.
  
  We now show that $\calD_\Gamma^\circ=\calD_\Lambda^\circ$.  
  Note that $\lambda\in\calD_\Gamma^\circ$ implies $\Lambda_\beta'(\|h\|_1\lambda\cdot\bfH(s))$ is bounded for all $s\in(0,1)$, and $\lambda\in (\lambda-\delta,\lambda+\delta)$ for some $\delta>0$.  Since $\calD_{\Lambda_\beta}^\circ = \calD_{\Lambda_\beta'}^\circ$, the same holds for $\Lambda_\beta(\|h\|_1\lambda\cdot\bfH(s))$.  Hence $\calD_\Gamma^\circ\subset\calD_\Lambda^\circ$.  A similar argument shows $\calD_\Lambda^\circ\subset\calD_\Gamma^\circ$.

  We now have $\Lambda$ is steep if and only if $\Gamma(\lambda)=\infty$ for all $\lambda\in\p\calD_\Lambda^\circ=\p\calD_\Gamma^\circ$.  Our next step is to change variables to simplify this boundary.  To that end, note that $\Gamma(\lambda)= K_1(\|h\|_1(\lambda_1+\lambda_2), \|h\|_1(\lambda_3+\lambda_4)) + K_2(\|h\|_1(\lambda_2), \|h\|_1(\lambda_4))$, so by a change of variables $\eta = (\lambda_1+\lambda_2, \lambda_3+\lambda_4, \lambda_2, \lambda_4)$ we have
  \begin{align*}
    \Gamma(\lambda) &= K_1(\|h\|_1\eta_1,\|h\|_1\eta_2) + K_2(\|h\|_1\eta_3,\|h\|_1\eta_4).
  \end{align*}
  Changing variables again $\eta\mapsto\eta/\|h\|_1$ and taking note of the non-negativitiy of the $K_i$, we see that $\Lambda$ is steep if and only if $K_i(\eta)=\infty$ for all $\Rtwo\ni\eta\in\p\calD_{K_i}$.

  Having proved the necessary and sufficient condition, we use this to show the three sufficient conditions.  The first has already been shown.  

  As for the third sufficient condition, choose an extremal point $\sbar\in(0,x)$ and assume $\eta\in\calD_{K_1}$.  Without loss of generality, assume we have the given expansion in the open set $(\sbar-\delta,\sbar)$.  Now
  $K_1(\eta)=\infty$ if for all $\delta>0$
  \begin{align}
    \int_{\sbar-\delta}^\sbar \Lambda_\beta'(F(s)\eta_1+\eta_2)\ds=\infty.
    \label{eq:K_integral_condition}
  \end{align}
  We shall reduce this condition to the type stated in the proposition. 
 
  Our expansion gives us
  \begin{align*}
    t(s) :&= F(s)\eta_1 + \eta_2 = b - c|s-\sbar|^r + R(s-\sbar),
  \end{align*}
  for a new positive constant $c$ and a new function $R$ (differing from the old $R$ by a constant).  After possibly shrinking $\delta$ we can solve for $s(t)$.
  \begin{align*}
    s(t) &= \sbar - c^{-1/r}(b+R(s-\sbar)-t)^{1/r}.
  \end{align*}
  Therefore, \eqref{eq:K_integral_condition} holds if and only if
  \begin{align}
    \int_{\sbar-\delta}^\sbar\Lambda_\beta'(F(s)\eta_1+\eta_2)\ds &= \int_{t(\sbar-\delta)}^b\Lambda_\beta'(t)s'(t)\dt=\infty.
    \label{eq:lambdaprime_condition}
  \end{align}
  Differentiating we have
  \begin{align*}
    s'(t)\left( 1 - \frac{R'(s-\sbar)}{c^{1/r}(b+R(s-\sbar)-t)^{(r-1)/r}} \right) &= \frac{1}{c^{1/r}(b-t)^{(r-1)/r}}\frac{(b-t)^{(r-1)/r}}{(b+R(s-\sbar)-t)^{(r-1)/r}}.
  \end{align*}
  Noting that $b+R(s-\sbar)-t = c(\sbar-s)^r$, and using our hypothesis on $R$, we have positive $c_1$, $c_2$ such that
  \begin{align}
    \frac{c_1}{(b-t)^{(r-1)/r}}&\leq s'(t)\leq\frac{c_2}{(b-t)^{(r-1)/r}}.
    \label{eq:sprime_equivalent}
  \end{align}
  Due to \eqref{eq:sprime_equivalent}, \eqref{eq:lambdaprime_condition} is equivalent to
  \begin{align*}
    \int_{b-1}^b\frac{\Lambda_\beta'(t)}{(b-t)^{(r-1)/r}}\dt=\infty.
  \end{align*}
  Sufficient condition three then follows by considering all possible such points $\sbar$.

  Sufficient condition 2 follows from 3 since given 2 we have the expansion in 3 with $r\geq1$ and therefore 
  \begin{align*}
    \int_{b-1}^b\frac{\Lambda_\beta'(t)}{(b-t)^{(r-1)/r}}\dt &\leq \int_{b-1}^b\Lambda_\beta'(t)\dt = \lim_{\delta\nearrow b} \int_{b-1}^{\delta}\Lambda_\beta'(t)\dt \\
    &=\lim_{\delta\nearrow b}\Lambda_\beta(\delta) - \Lambda_\beta(b-1)=\infty,
  \end{align*}
  in light of our assumption $\calD_{\Lambda_\beta} = (-\infty,b)$ and the lower-semicontinuity of $\Lambda_\beta$, which follows from Fatou's lemma.
\end{proof}

  

\section{LDP approximations and generalizations}
\subsection{Rate functions for approximate solutions}
\label{subsection:approximate_rate_functions}
To compute (na\"ively) the rate functions for $\ueps$ requires first a four-dimensional (convex) optimization to obtain $\Lambda^\ast$, and then another four dimensional optimization to obtain $I_\ueps$.  Our corrector theory shows that one can rigorously approximate $\ueps=u_0+\veps$ in an $\sqrt{\eps}$ neighborhood of $u_0$.  Motivated by this we consider the rate function for $u_0+\veps$.  

However, a large deviation will necessarily take us outside the $\sqrt{\eps}$ neighborhood, so more discussion is in order.  Looking at terms in the expansion \eqref{eq:asymptotic_expansion} $\ueps = u_0 + \veps + R_\eps$ we see that when terms of the form
\begin{align*}
  \frac{1}{\brac{H_k/A_0}}\int_0^1\frac{H_i(s)}{\Aeps(s)}\ds
\end{align*}
are not too large, we can approximate $\ueps\approx u_0 + \veps$.  An exact rate function for $u_0+\veps$ can then be calculated.  We call this our \emph{approximate rate function} $\Itilde(\ell)$.  Note that this is equivalent to approximating $g$ (from \eqref{eq:Zeps_definition}) by some map $\tilde g$ and using the contraction principle.  It could be argued then that the inverse images $g^{-1}\{\ell\}\approx \tilde g^{-1}\{\ell\}$ when $\ell$ is sufficiently small (or some other better conditions).  Then continuity of the rate function $\Lambda^\ast(z)$ shows that $\Iueps(\ell)\approx\Itilde(\ell)$.  Since we can also represent $\veps(x)$ (for fixed $x$) in terms of an integral against a single function $G(x,s)$, we can obtain a rate function for $u_0+\veps$ without using the contraction principle.  This gives us a more explicit form, and since rate functions are unique (lemma 4.4.1 \cite{Dembo_Large}) these methods give the same result.

We present now the LDP for $u_0+\veps$.  The proof is a simplified version of the LDP proof for $\ueps$.  
\begin{proposition}
  With $G$, $\veps$ given by \eqref{eq:veps_rewritten}, we have:
  \begin{align*}
    \lim\eps\log \Exi e^{\eps^{-1}\lambda(u_0+\veps)} &= \Lambda(\lambda).
  \end{align*}
  Where, when $\Aeps$ is the parameterized media defined by \eqref{eq:parameterized_media_definition}, and $V_\alpha$ given by \eqref{eq:Valpha_definition}, 
  \begin{align*}
    \Lambda(\lambda) :&= \lambda u_0+\int_0^1\left[-\lambda\frac{G(s)}{A_0(s)}+ \Lambda(V_{a(s)},\lambda G(s))\right]\ds,
  \end{align*}
  and when $\Aeps$ is the convolved media defined by \eqref{eq:Aeps_convolved},
  \begin{align*}
    \Lambda(\lambda) :&= \lambda u_0+\int_0^1\left[-\lambda\frac{G(s)}{A_0(s)}\ds + \Lambda_\beta(\|h\|_1\lambda G(s))\right]\ds.
  \end{align*}
  In either case $\Lambda$ is convex and whenever $\Lambda$ is steep $u_0+\veps$ satisfies a large deviation principle with good convex rate function 
  \begin{align*}
    \Itilde(\ell) :&= \sup_{\lambda\in\Rone}\left[ \lambda\ell - \Lambda(\lambda) \right].
  \end{align*}
  Moreover, whenever $\Itilde$ is finite in a neighborhood of $\ell>\lim\E\ueps$ then \eqref{eq:LDP_equality} holds.
  \label{proposition:approximate_LDP}
\end{proposition}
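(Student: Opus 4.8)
The plan is to exploit the fact that, in contrast to $\ueps$, the approximate solution $u_0+\veps$ is a scalar \emph{affine} functional of a single oscillatory integral, so that the G\"artner--Ellis theorem applies directly, with no contraction step. Using \eqref{eq:veps_rewritten} and $q_\eps = \Aeps^{-1} - A_0^{-1}$, write
\begin{align*}
  u_0+\veps &= c_0 + \int_0^1\frac{G(s)}{\Aeps(s)}\ds, \qquad c_0 := u_0 - \int_0^1\frac{G(s)}{A_0(s)}\ds,
\end{align*}
a deterministic constant plus the single oscillatory integral $\int_0^1 G(s)/\Aeps(s)\ds$. Consequently the logarithmic moment generating function splits as
\begin{align*}
  \eps\log\Exi e^{\eps^{-1}\lambda(u_0+\veps)} &= \lambda c_0 + \eps\log\Exi\exp\left\{ \eps^{-1}\int_0^1\frac{\lambda G(s)}{\Aeps(s)}\ds \right\},
\end{align*}
and the whole problem reduces to the limit of the second term.

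First I would compute that limit by re-running, in the scalar setting, the arguments already established for the four-dimensional problem. For the parameterized media \eqref{eq:parameterized_media_definition} this is verbatim the computation in the proof of Theorem \ref{theorem:LDP_for_parameterized_problems} with the scalar $\lambda G(s)$ in place of $\lambda\cdot\bfH(s)$: discretize at the points $n\eps$, sandwich the cell contributions between piecewise-constant envelopes built from the max/min of $a(\cdot,\xi)$ and $G$ on the cells $[(n-1)\eps,n\eps)$, invoke the ellipticity bounds on $\Aeps$ together with the continuity of $a(\cdot,\xi)$ and the (piecewise) continuity and boundedness of $G$, and pass to the limit; this yields $\int_0^1\Lambda(V_{a(s)},\lambda G(s))\ds$. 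For the convolved media \eqref{eq:Aeps_convolved} it is precisely Lemma \ref{lemma:technical_limit} with its ``$\lambda$'' set equal to $1$, its ``$G$'' set equal to $\lambda G$, and $N=\eps^{-1}$, giving $\int_0^1\Lambda_\beta(\|h\|_1\lambda G(s))\ds$. Adding $\lambda c_0$ produces exactly the two displayed formulas for $\Lambda(\lambda)$. As a consistency check one may note $G(x,s) = \kappa\cdot\bfH(s)$ for the constant vector $\kappa = (-1,\,c_2,\,\mu,\,-c_2\mu)$ with $c_2 = \langle 1/A_0\rangle^{-1}\int_0^x A_0(t)^{-1}\dt$ and $\mu = \langle F/A_0\rangle/\langle 1/A_0\rangle$; thus $u_0+\veps$ is an affine image of $\Zeps$, so the contraction principle applied to Theorems \ref{theorem:LDP_for_parameterized_problems} and \ref{theorem:LDP_for_convolved_media} produces the same LDP, and uniqueness of rate functions (lemma 4.4.1 of \cite{Dembo_Large}) identifies the two forms.

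Next I would record the structural properties of $\Lambda$ needed for G\"artner--Ellis. Convexity is immediate: $\lambda\mapsto\lambda u_0$ and $\lambda\mapsto\lambda c_0$ are linear, and for each fixed $s$ the maps $\lambda\mapsto\Lambda(V_{a(s)},\lambda G(s))$ and $\lambda\mapsto\Lambda_\beta(\|h\|_1\lambda G(s))$ are convex, being the convex functions $\mu\mapsto\Lambda(V_{a(s)},\mu)$ (H\"older's inequality, as already used) and $\Lambda_\beta$ precomposed with a linear map; integration in $s$ preserves convexity. Lower semicontinuity follows from the Fatou argument used for Lemma \ref{lemma:limiting_cramer_functional}(ii), the integrands being bounded below. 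For the parameterized media the ellipticity bounds make $V_{a(s)}$ bounded, so $\Lambda$ is finite and $C^\infty$ on all of $\Rone$; it is then trivially essentially smooth, and $0\in\calD_\Lambda^\circ = \Rone$. For the convolved media the domain analysis of Lemma \ref{lemma:limiting_cramer_functional}(iii), with the bounded $G$ in place of $\bfH$, shows $\calD_\Lambda^\circ$ is a nonempty open interval containing the origin on which $\Lambda'$ is finite; together with lower semicontinuity and the \emph{assumed} steepness this gives essential smoothness.

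Finally I would apply the G\"artner--Ellis theorem (Theorem \ref{theorem:GE}) to the scalar family $u_0+\veps$: its hypotheses (existence of $\Lambda$, essential smoothness, lower semicontinuity, $0\in\calD_\Lambda^\circ$) have just been verified, so $u_0+\veps$ satisfies an LDP with good convex rate function $\Itilde = \Lambda^\ast$. For the last assertion, since $\veps$ is $O(\sqrt\eps)$ in $L^2(\Omega)$ (Theorem \ref{theorem:corrector}) one has $a := \lim\E(u_0+\veps) = u_0 = \lim\E\ueps$; the remark following Theorem \ref{theorem:GE} then gives that, whenever $\Itilde$ is finite in a neighborhood of some $\ell>u_0$, convex $\Itilde$ is non-decreasing on $[u_0,\infty)$ and continuous there, whence \eqref{eq:LDP_equality}. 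I expect the only genuinely delicate point to be the essential-smoothness/steepness bookkeeping in the convolved case --- the same issue that forced Proposition \ref{proposition:steepness_criteria} in the four-dimensional setting, and the reason steepness is carried as a hypothesis here; everything else is a transcription of arguments already in hand.
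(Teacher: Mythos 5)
Your proposal is correct and follows essentially the route the paper itself indicates: it writes $u_0+\veps$ as a deterministic constant plus the single scalar oscillatory integral $\int_0^1 G(s)/\Aeps(s)\,\mbox{d}s$, reruns the scalar versions of the arguments behind Theorems \ref{theorem:LDP_for_parameterized_problems} and \ref{theorem:LDP_for_convolved_media} (Lemma \ref{lemma:technical_limit} in the convolved case) to get the limiting Cram\'er functional, and applies G\"artner--Ellis directly without the contraction step, with uniqueness of rate functions reconciling the two derivations. The details you supply (the affine identification $G=\kappa\cdot\bfH$, convexity, lower semicontinuity, domain/steepness bookkeeping, and the remark after Theorem \ref{theorem:GE} for \eqref{eq:LDP_equality}) are all consistent with what the paper leaves implicit.
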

\begin{remark}
  The steepness criteria in proposition \ref{proposition:steepness_criteria} also apply here with $G(x,s)$ (written $G(s)$ when we fix $x$) replacing $F(s)$.
\end{remark}

\subsection{Generalizations to incompletely characterized media}
Gaussian corrector results, e.g. theorem \ref{theorem:corrector}, require knowledge of the first two moments of the random media (along with other ``niceness'' assumptions such as mixing and bounds on higher moments).
The question arises:  How much must be known about the random media for a large deviation result?  Here we partially answer this question and leave a thread open for future work.

LDP results for mixing random variables are available \cite{Dembo_Large,Bryc_Large_1992}.  These give existence but not an explicit form for limiting Cram\'er functional $\Lambda$, and hence only existence of an LDP (instead of an explicit form).  Restricting our attention to the case of parameterized media (section \ref{subsubsection:parameterized_media}) or convolved media (section \ref{subsubsection:convolved_media}), we ask, ``how general can the assumptions on the $\theta_k$ or $\beta_k$ be?''  In general, we cannot expect the moment generating function of our media to be known.  Notice that 
\begin{align}
  \tilde\Lambda \geq\Lambda\quad &\Rightarrow \quad\sup_{\lambda\in\Rone}\left[ \lambda\ell - \tilde\Lambda(\lambda) \right] \leq \sup_{\lambda\in\Rone}\left[ \lambda\ell - \Lambda(\lambda) \right].
  \label{eq:approximate_LDP_upper_bound}
\end{align}
In other words, if we can obtain upper bounds for the moment generating function, then we can obtain a lower bound on the rate function.  So a strategy would be:  First, write the limiting logarithmic moment generating function $\Lambda(\lambda) := \lim_{\eps\to0}\eps\Lambda(\eps^{-1}\veps,\lambda)$ in terms of the logarithmic moment generating function of the random media (e.g. $\Lambda_\beta$ in lemma \ref{lemma:limiting_cramer_functional}).  Second, find upper bounds for $\Lambda_\beta$ using e.g. Bennett's inequality (Lemma 2.4.1 in \cite{Dembo_Large}) (this bounds the moment generating function of a bounded random variable in terms of its mean and variance).  Third, a rigorous limiting upper bound is now available via \eqref{eq:approximate_LDP_upper_bound}.

\section{Model Problems}
\label{section:example_problems}
Here we explore two specific examples and give numerical results.  In both cases the right hand side $f(x) \equiv1$ for $x\in(0.45, 0.55)$ and $f(x)\equiv0$ elsewhere.  Hence $F(s)$ is piecewise smooth.  Therefore, using sufficient steepness condition 3 (proposition \ref{proposition:steepness_criteria}) our logarithmic moment generating functions will be steep (since $s\mapsto G(x,s)$ is piecewise smooth).

\subsection{Numerical results for parameterized media}
\label{subsection:independent_media}
Here we use a field that fits into the framework of section \ref{subsubsection:parameterized_media}.
This gives some control over the large deviations.

Let $\xi=(\xi_0,\dots,\xi_7)$ be $\sim_\iid\calU[-1,1]$.
We then set
\begin{align*}
  a(x,\xi) &= \max\left\{ 1 + 2\frac{1-0.75}{0.75}\sum_{m=0}^{7} \xi_mr^m\sin[(2m+1)\pi x],\,\, \frac{17}{32} \right\}.
\end{align*}

Next, let $\theta = (\theta_1,\dots,\theta_m,\dots)$ be an infinite collection of identically distributed independent rvs $\theta_i\sim\calU[-1,1]$ (which are also independent of the $\xi_i$).  Put
\begin{align*}
  b(y,\theta) &= \frac{1}{2}\sum_{m=0}^\infty \theta_m\one_{m\leq y<m+1}(y).
\end{align*}
In other words, $b(y,\theta)=\theta_m$ when $m<y<m+1$. 

We are ensured of the ellipticity condition
\begin{align*}
  \frac{1}{32} &\leq A(x,\xovereps) \leq \frac{7}{2}.
\end{align*}
The resultant media is pictured in figure \ref{fig:parameterized_diffusion_coefficient_comparision}.

\begin{figure}[h]
  \includegraphics[width=0.5\textwidth]{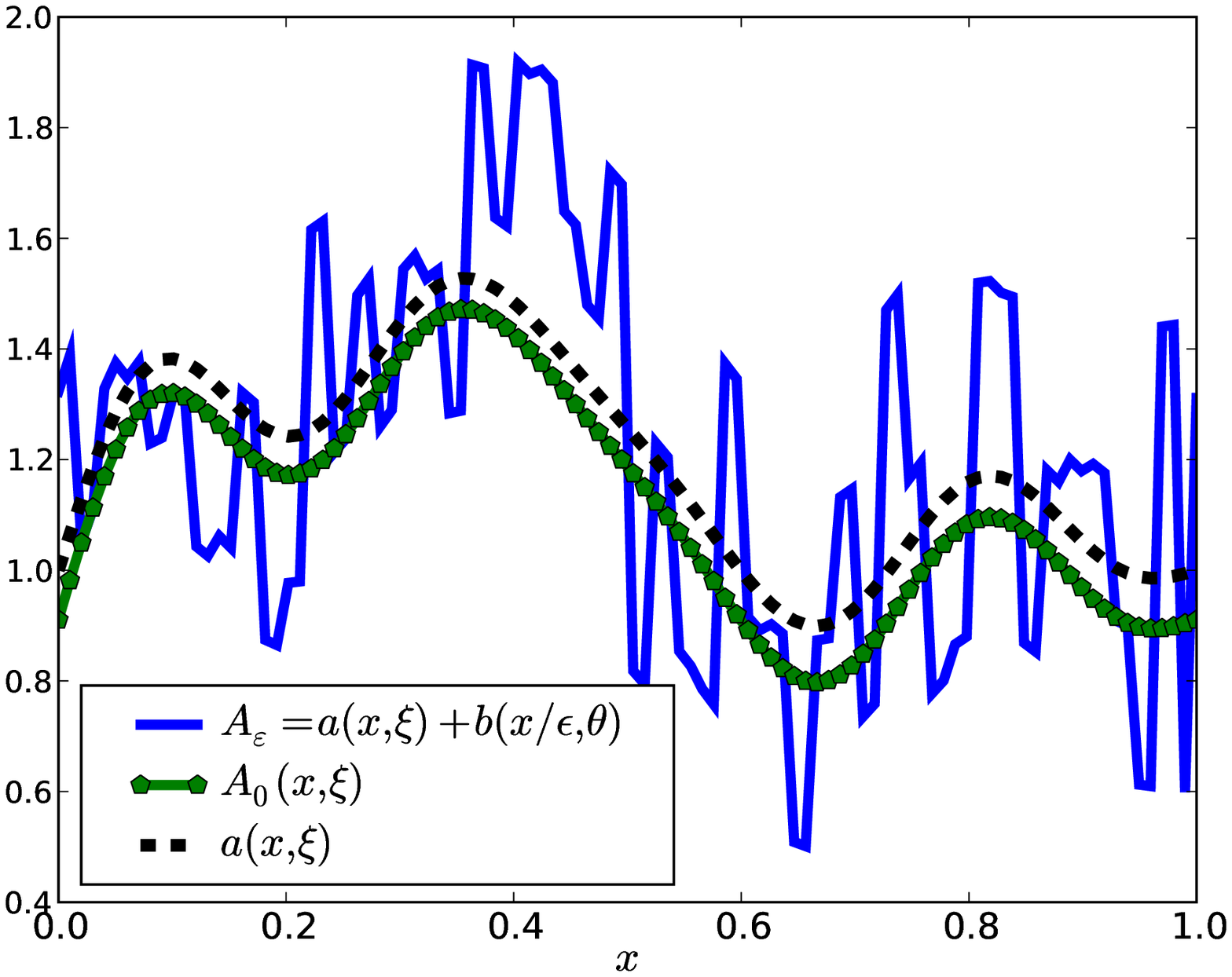}
  \includegraphics[width=0.5\textwidth]{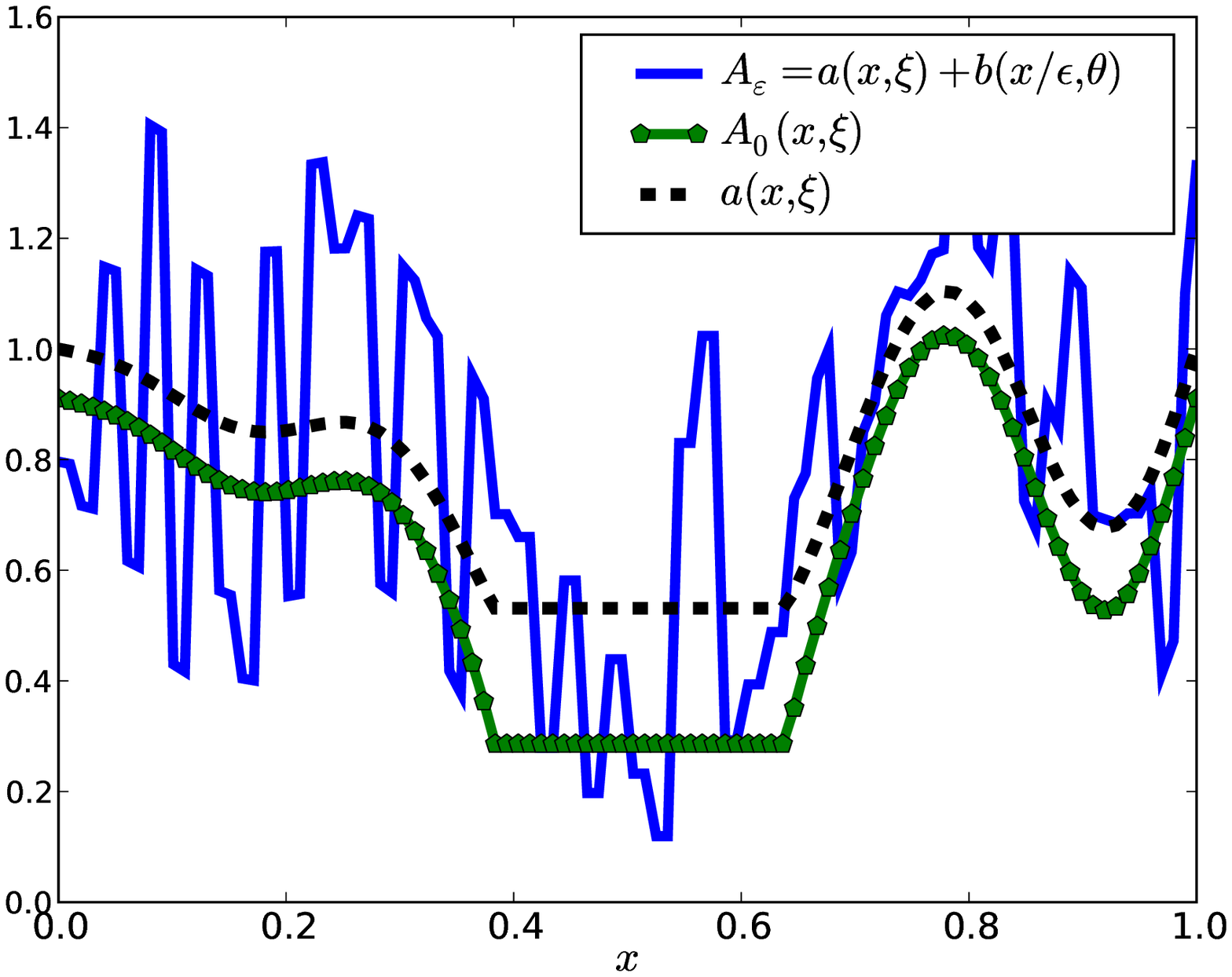}
  \caption{Comparison of two realizations of parameterized diffusion coefficients (section \ref{subsection:independent_media}).  The homogenized coefficient differs significantly only when $a(x,\xi)$ is small.  In both cases, $\eps = 1/50$.  On the left $a(x,\xi)$ is such that no values of $\theta_i\sim\calU[-1,1]$ bring $\Aeps$ close to zero.  This realization of $a(x,\xi)$ is quite typical and is referred to as our ``mild'' coefficient.  On the right we have a less common ``wild'' coefficient.}
  \label{fig:parameterized_diffusion_coefficient_comparision}
\end{figure}

Following as in section \ref{subsubsection:parameterized_media} we characterize
$V_\alpha := (\alpha + \theta/2)^{-1}$ with $\theta\sim\calU[-1,1]$.  We have an explicit density for $V_\alpha$,
\begin{align}
  \pi_{V_\alpha}(v) &= \one_{\frac{1}{\alpha+1/2}<V_\alpha<\frac{1}{\alpha-1/2}}(v)\frac{1}{v^2}.
  \label{eq:v_alpha_density}
\end{align}
Hence,
\begin{align*}
  \Exp{V_\alpha} :&= \log\left( \frac{\alpha+1/2}{\alpha-1/2} \right),\quad
  \Exp{V_\alpha^2} := \frac{1}{\alpha^2 - 1/4}.
\end{align*}
Therefore, 
\begin{align*}
  A_0(x,\xi) &= \Expxi{\frac{1}{\alpha + b(0)}}^{-1}\bigg\vert_{\alpha=a(x,\xi)} = \left[\log\left( \frac{\alpha+1/2}{\alpha-1/2} \right) \right]^{-1}\bigg\vert_{\alpha=a(x,\xi)},\\
  \Covalpha(\tau) &= \left\{
  \begin{matrix}
    \Exp{V_\alpha^2} - \Exp{V_\alpha}^2,& 0\leq\tau<1\\
    0,& \mbox{ otherwise},
  \end{matrix}
  \right.\\
  \sigma^2(t) &= \Covalpha\big\vert_{\alpha=\alpha(t,\xi)}.
\end{align*}
These calculations are enough to give us explicit integrals defining the homogenized term $u_0$ and the corrector $\veps$.  Namely, $u_0$ solves
\begin{align*}
  \ddx A_0(x)\ddx u_0 &= f(x),
\end{align*}
and the corrector is given by theorem \ref{theorem:corrector} with $\sigma^2(t)$ as above.

The large deviations result for $\ueps$ is given by theorem \ref{theorem:LDP_for_parameterized_problems}, and our approximate rate function by \ref{proposition:approximate_LDP}.  In particular, the limiting Cram\'er functional (theorem \ref{theorem:LDP_for_parameterized_problems}) is given by
\begin{align*}
  \Lambda(\lambda) :&= \int_0^1 \log\int_{(a(s)+1/2)^{-1}}^{(a(s)-1/2)^{-1}}\frac{e^{\lambda\cdot\bfH(s)v}}{v^2}\dv \ds,
\end{align*}
and for the approximate rate-function, 
\begin{align*}
  \Lambda(\lambda) :&= \lambda u_0 +\int_0^1 \left[-\lambda\frac{G(s)}{A_0(s)}+ \log\int_{(a(s)+1/2)^{-1}}^{(a(s)-1/2)^{-1}}\frac{e^{\lambda G(s)v}}{v^2}\dv  \right]\ds.
\end{align*}

We define empirical rate functions $\calE(\ueps(0.5), \ell)$, $\calE(u_0(0.5)+\sqrt{\eps}v(0.5),\ell)$ (for $\ueps(0.5)$ and $u_0(0.5) + \sqrt{\eps}v(0.5)$ respectively) as follows.  With $\{X_1,\dots,X_N\}$ a set of samples from $W$ ($W=\ueps$ or $W=u_0+\sqrt{\eps}v$), we set
\begin{align}
  \begin{split}
    \widehat{W} :&= \lim_{N\to\infty}\frac{1}{N}\sum_{j=1}^N X_j,\\
  \calE(W,\ell) :&= \left\{
  \begin{matrix}
    \eps\log \frac{1}{N}\sum_{j=1}^N\one_{X\geq\ell}(X_j),&\quad \ell > \widehat{W}\\
    \eps\log \frac{1}{N}\sum_{j=1}^N\one_{X\leq\ell}(X_j),& \ell<\widehat{W}.
  \end{matrix}
  \right.
\end{split}
\label{eq:empirical_rate_function}
\end{align}
Note that $\lim_{N\to\infty}\calE(u_0+\sqrt{\eps}v,\ell)$ has an explicit expression, and we use this in place of \eqref{eq:empirical_rate_function} to compute $\calE(u_0 + \sqrt{\eps}v,\ell)$.
Since $X_j\geq\ell$ is a rare event we cannot compute $\calE$ by direct sampling.  A crude importance sampling technique was used whereby the $\theta_i\sim\calU[-1,1]$ were replaced by with (scaled and shifted) Bradford random variates, 
\begin{align*}
  \pi_{Brad}(\theta) &= \frac{c}{2\log(1+c)\left( 1 + \frac{c}{2}(\theta+1) \right)}\one_{|x|<1}(\theta).
\end{align*}
As $c>0$ increases, the draws $\theta_i$ are more likely to concentrate near $-1$.  This gives a smaller diffusion coefficient and hence larger solution.  Calculation of $\calE$ must then be re-weighted by the factor $\pi_{Uniform}(\theta_i)/\pi_{Brad}(\theta_i)$.  See e.g. \cite{Caflisch_Acta_Monte,Robert_MonteCarloBook_2004} for an overview of importance sampling.
\begin{figure}[hp!]
  \includegraphics[width=0.5\textwidth]{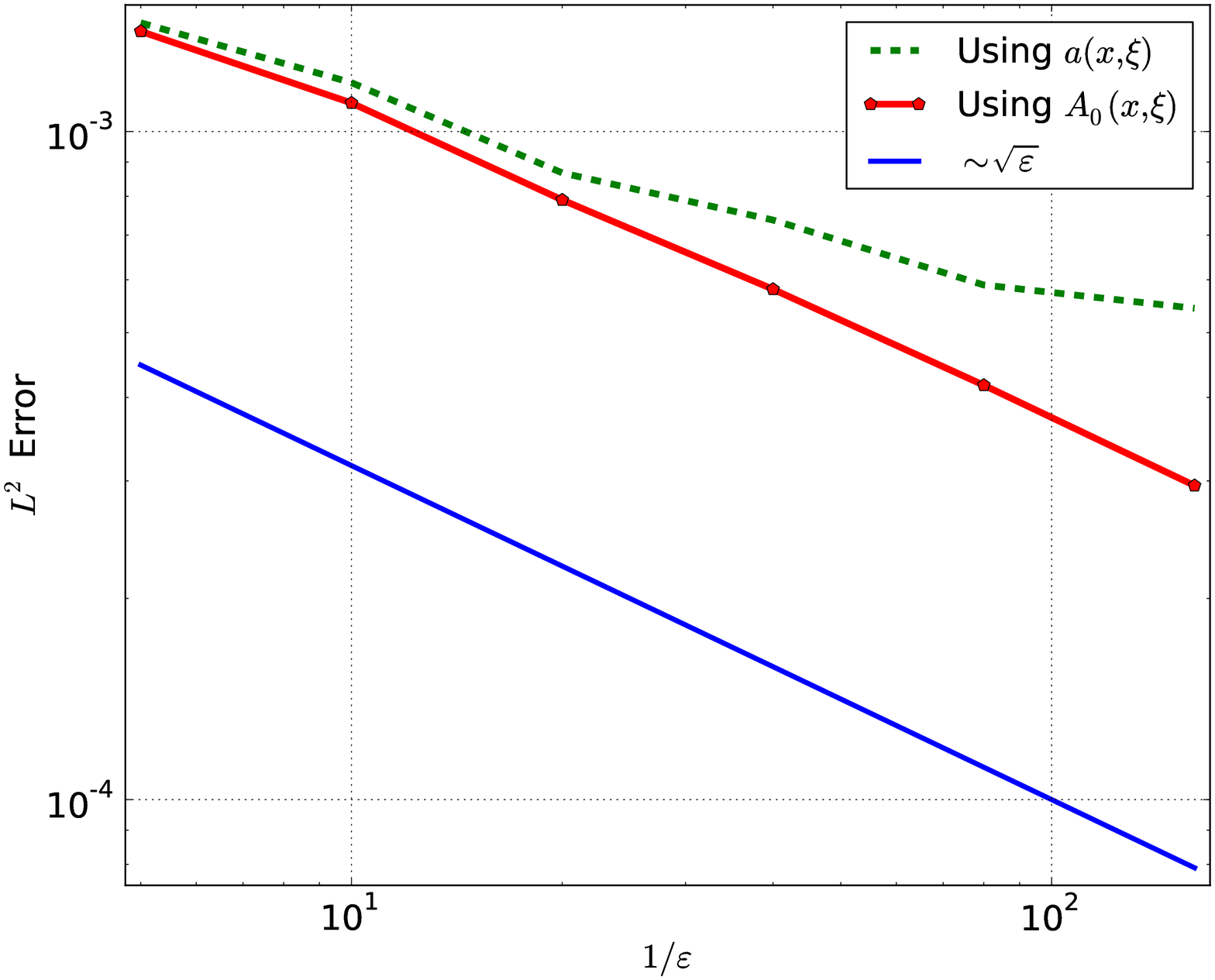}
  \includegraphics[width=0.5\textwidth]{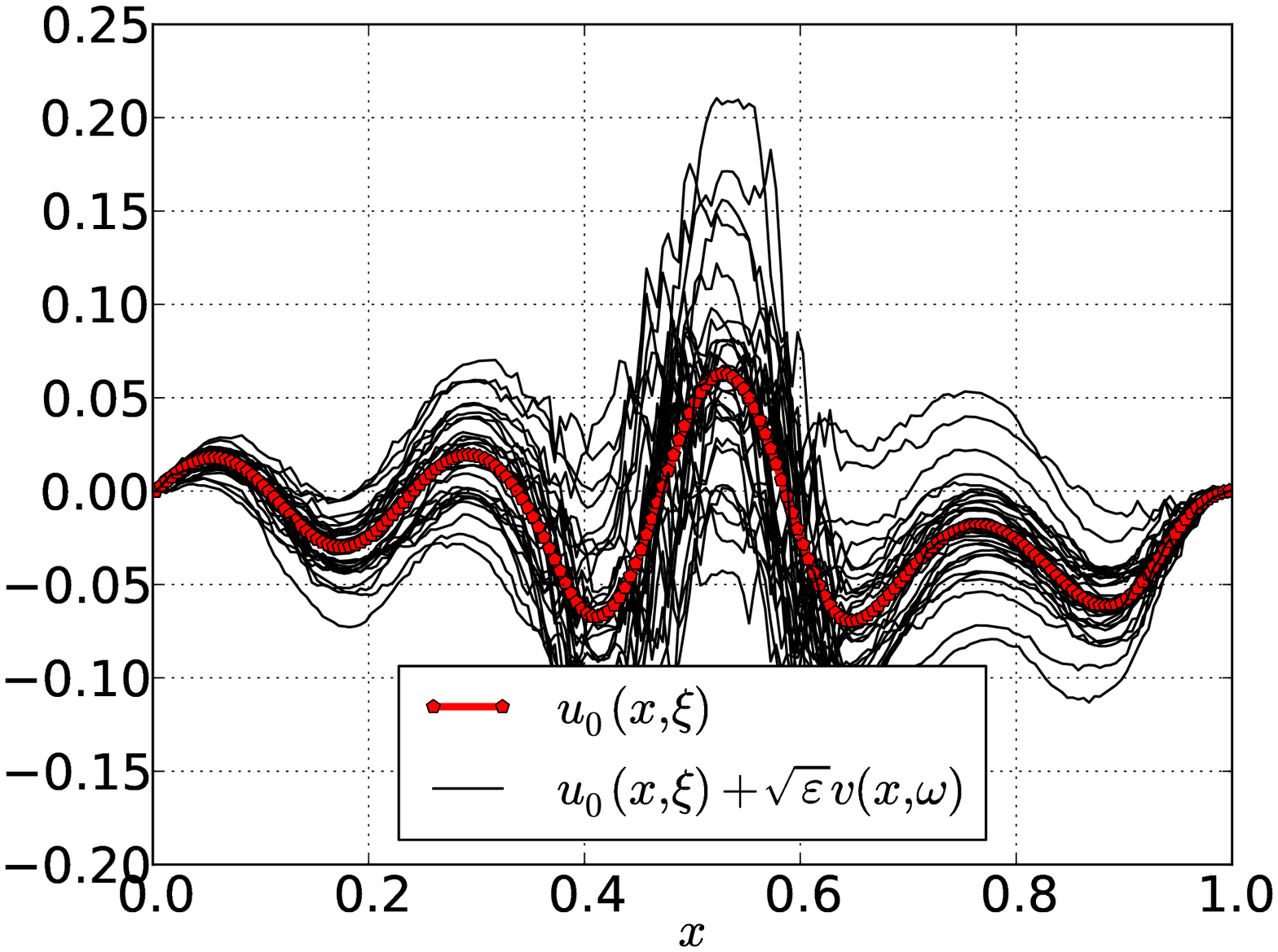}
  \caption{Left:  $L^2$ convergence of $\Exp{\|\ueps-u_0\|}$ and $\Exp{\|\ueps-u_a\|}$ where $u_a$ is the result of a truncated coefficient expansion (using $a(x,\xi)$ only).  This verifies theorem \ref{theorem:convergence}.
  Right: Homogenized solution $u_0(x,\xi)$ and many realization of the corrected solution $u_0(x,\xi)+ \sqrt{\eps}v(x,\omega)$.  For all realizations $\xi$ was fixed at the same value as the wild coefficient (figure \ref{fig:parameterized_diffusion_coefficient_comparision} right)}
  \label{fig:homogenized_convergence_many_correctors}
\end{figure}

In figure \ref{fig:parameterized_diffusion_coefficient_comparision} two realizations of the parameterized media are shown.  We will fix the low frequency part $a(x,\xi)$ and study the behavior of the solution over different realizations of $b(\xovereps,\theta)$.  The ``mild'' medium (left) has $a(x,\xi)$ far from zero, so no matter what $b(\xovereps,\theta)$ is the solution is small.  The ``wild'' medium has a section of very small $a(x,\xi)$.  In all cases, one notes that the homogenized coefficient $A_0(x,\xi)$ differs from the low-frequency coefficient $a(x,\xi)$ most when the medium is small.  In this case, $A_0<a$ in an attempt to affect a large jump in the solution $u_0$ to approximate the often large solution $\ueps$ (although $b(y,\theta)$ is symmetric about $0$, the resultant solution $\ueps$ is not symmetric about $u_0$).

We verify theorem \ref{theorem:corrector} in figures \ref{fig:parameterized_CLT} and \ref{fig:corrector_variance}.   In figure \ref{fig:parameterized_CLT} one can see that the pdf of the corrected solution (at the fixed point $x=1/2$) agrees well with the true solution so long as $\eps$ is small enough.  The fit is worse for the ``wild'' medium, and in particular the true pdf shows an asymmetry that the Gaussian corrector cannot have.  In figure \ref{fig:corrector_variance}, one sees that when $\eps\approx1/50$ (or smaller) the corrector captures the variance quite well.
\begin{figure}[hp!]
  \includegraphics[width=0.5\textwidth]{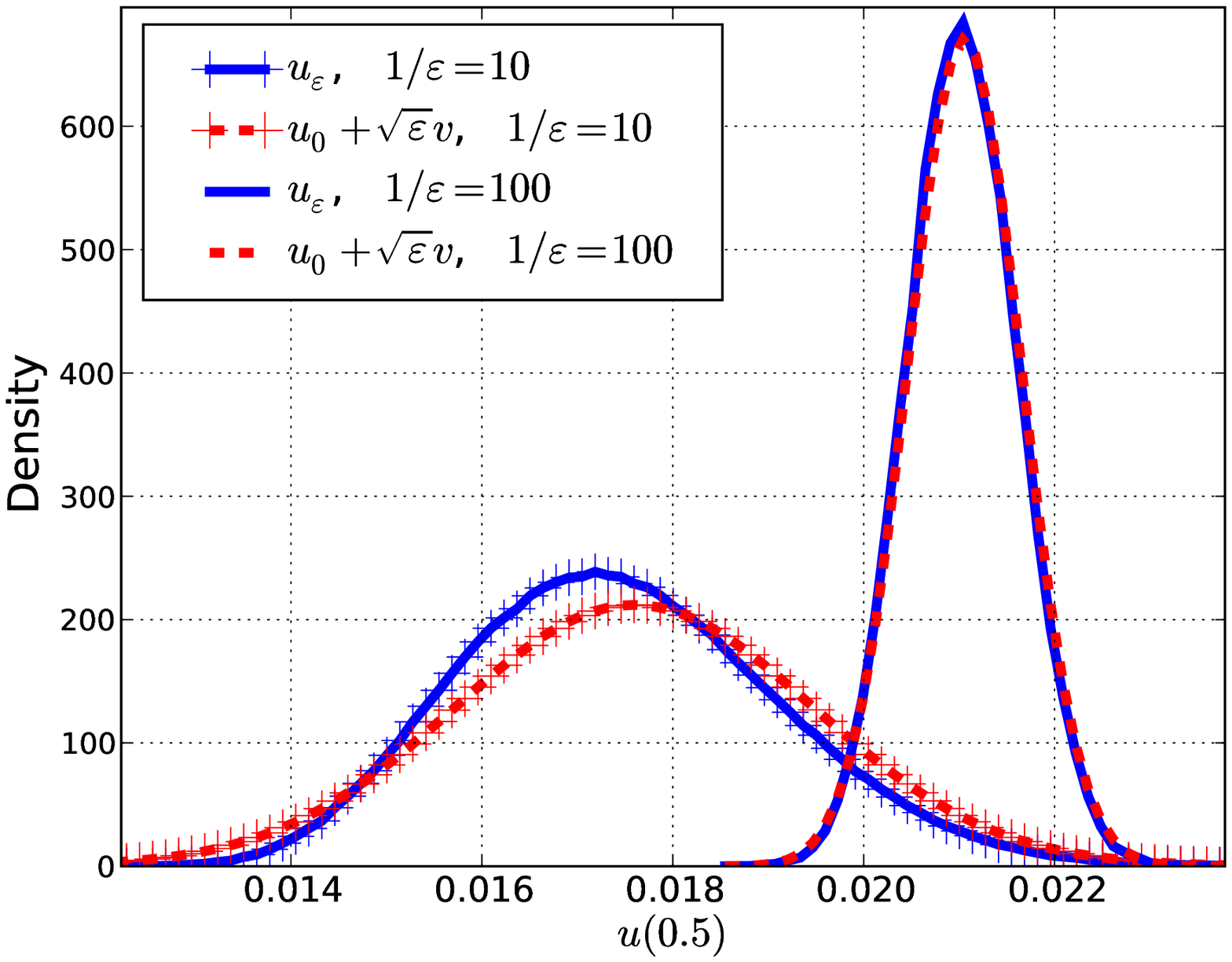}
  \includegraphics[width=0.5\textwidth]{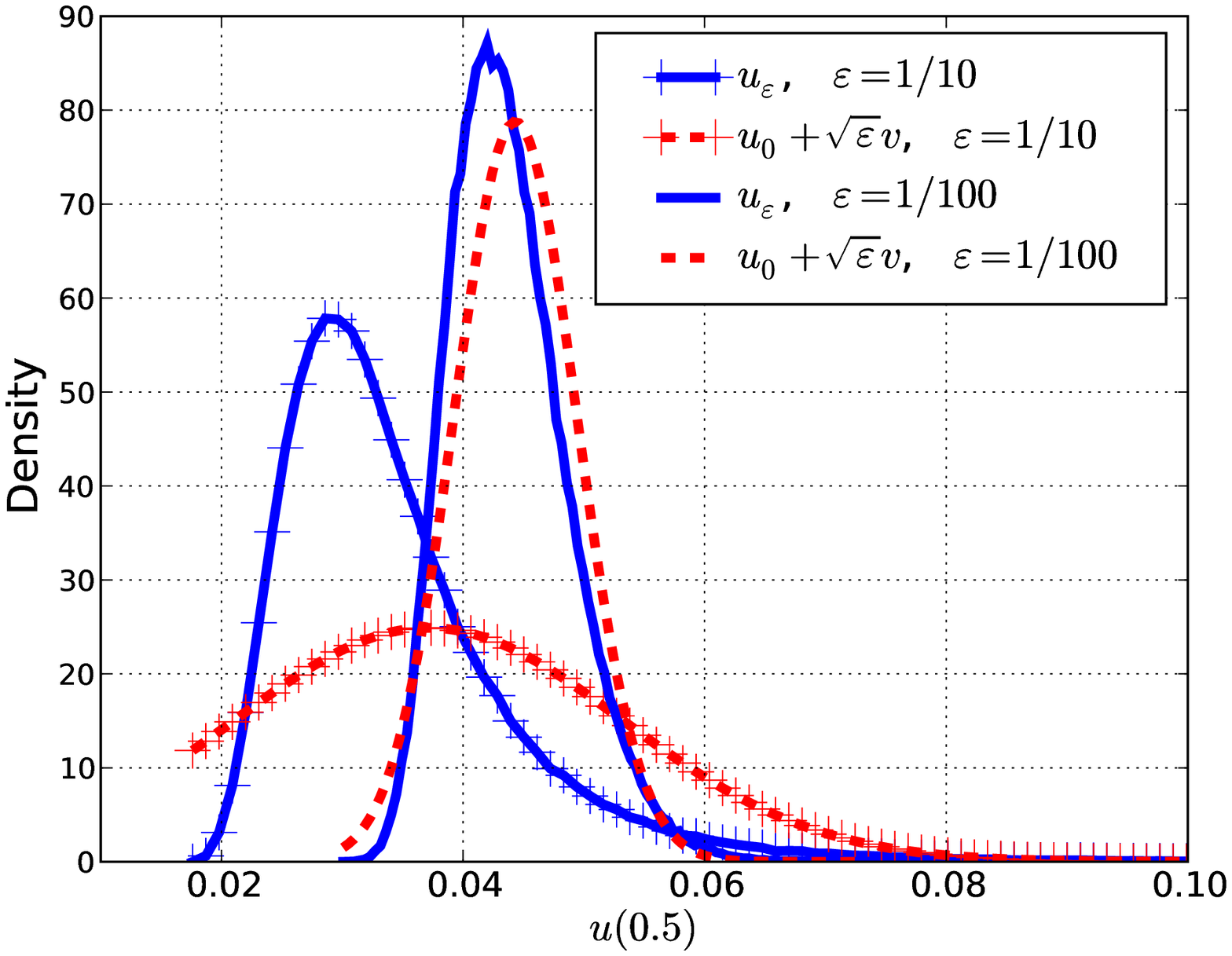}
  \caption{Left:  Observed density of $u(0.5)$ for the mild medium (figure \ref{fig:parameterized_diffusion_coefficient_comparision} left).  Plot shows that the corrector captures the bulk of the variance quite well.  Right: The wild medium (figure \ref{fig:parameterized_diffusion_coefficient_comparision} right) is shown.  Results are not as good.}
  \label{fig:parameterized_CLT}
\end{figure}
\begin{figure}[h!]
  \includegraphics[width=\textwidth]{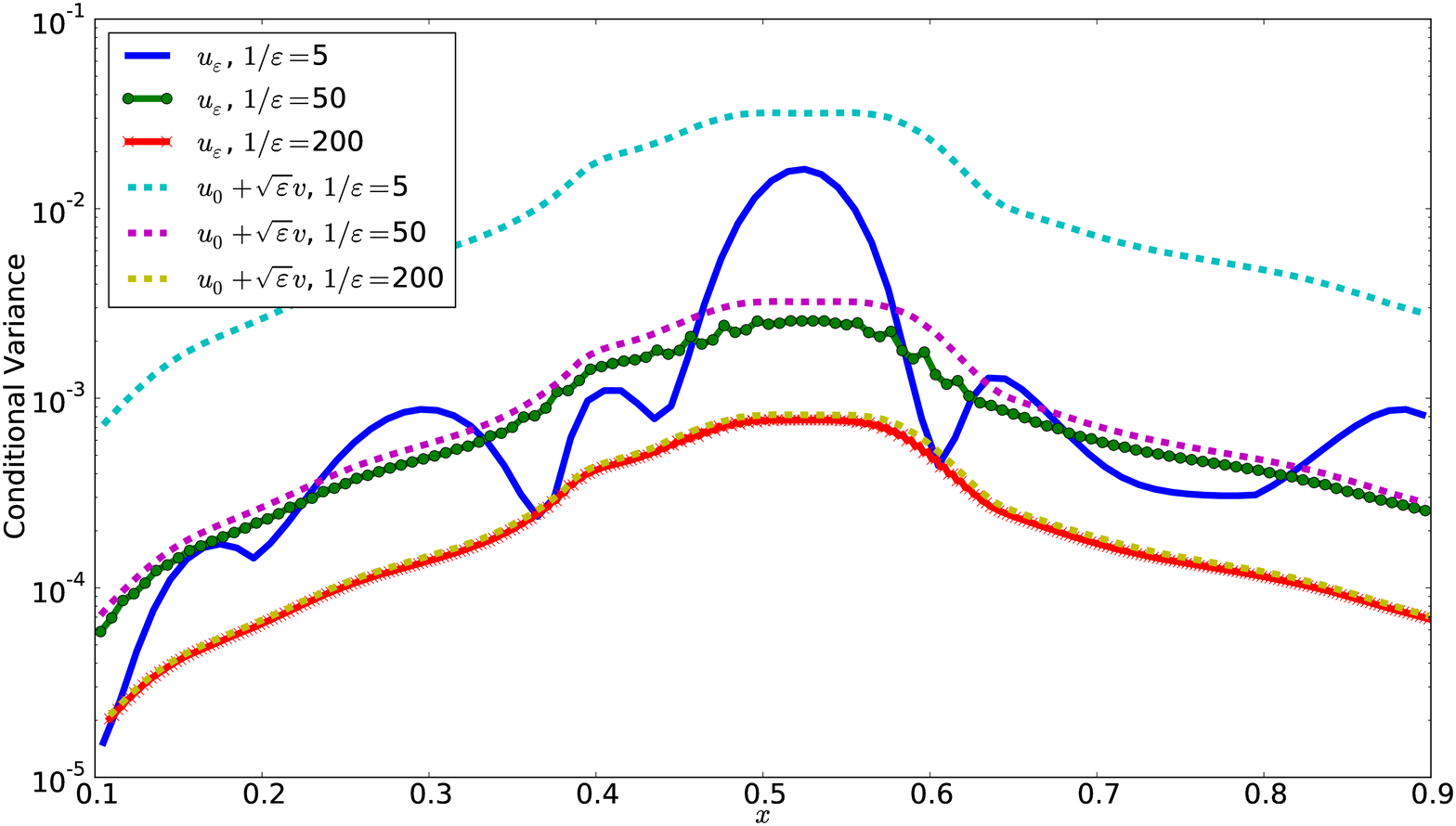}
    \caption{Verification of theorem \ref{theorem:corrector}.  Conditional variance $\Expxi{\ueps(x,\omega)^2}$ and $\Expxi{(u_0(x) + \sqrt{\eps}v(x,\omega))^2}$ for various $\eps$.  For all realizations $\xi$ was fixed at the same value as the wild coefficient (figure \ref{fig:parameterized_diffusion_coefficient_comparision} left).  Plots show good agreement when $1/\eps\geq50$.  Note also that $\Expxi{(u_0(x)+\sqrt{\eps}v(x))^2}$ is available explicitly via \eqref{eq:corrector_covariance}.  
    \label{fig:corrector_variance}}
\end{figure}
Rate functions for the mild medium are compared in figure \ref{fig:parameterized_LDP_theory_mild}.  One can see that the corrector and true rate function are almost indistinguishable until the true solution saturates around $0.35$.  The approximate rate function also works well up until $u(0.5)\approx0.3$.  In this case one could use the approximate rate function to see \emph{a-priori} that the corrector stands a chance of capturing the large-deviation behavior well.  The case is different for the wild medium LDP results in figure \ref{fig:parameterized_LDP_theory_wild}.  Here one can see that the corrector rate function separates from the true rate function fairly early on.  While the fit between the approximate rate function $\Itilde$ and the true rate function $\calE(\ueps,\cdot)$ is not perfect, one could still tell, using only $\Itilde$, that the Gaussian corrector stands little chance of capturing the large deviation behavior.  

It should be noted that since for the mild medium, the maximum possible value of $\ueps(0.5)$ was approximately $0.35$, we consider $\ell=0.03$ a large deviation.  The scale is harder to set with the wild medium since our sampling could not achieve results near the maximum.  However, one does note (figure \ref{fig:parameterized_LDP_theory_wild}) that by $\ell\approx0.6$ the empirical rate function differs from a Gaussian rate function by quite a bit.  For that reason, we consider $\ell>0.6$ to be a large deviation.  It is important to note however that the approximate rate function, being based on a linearization, does differ from the true rate function for large enough $\ell$. 
\begin{figure}[hp!]
  \includegraphics[width=0.5\textwidth]{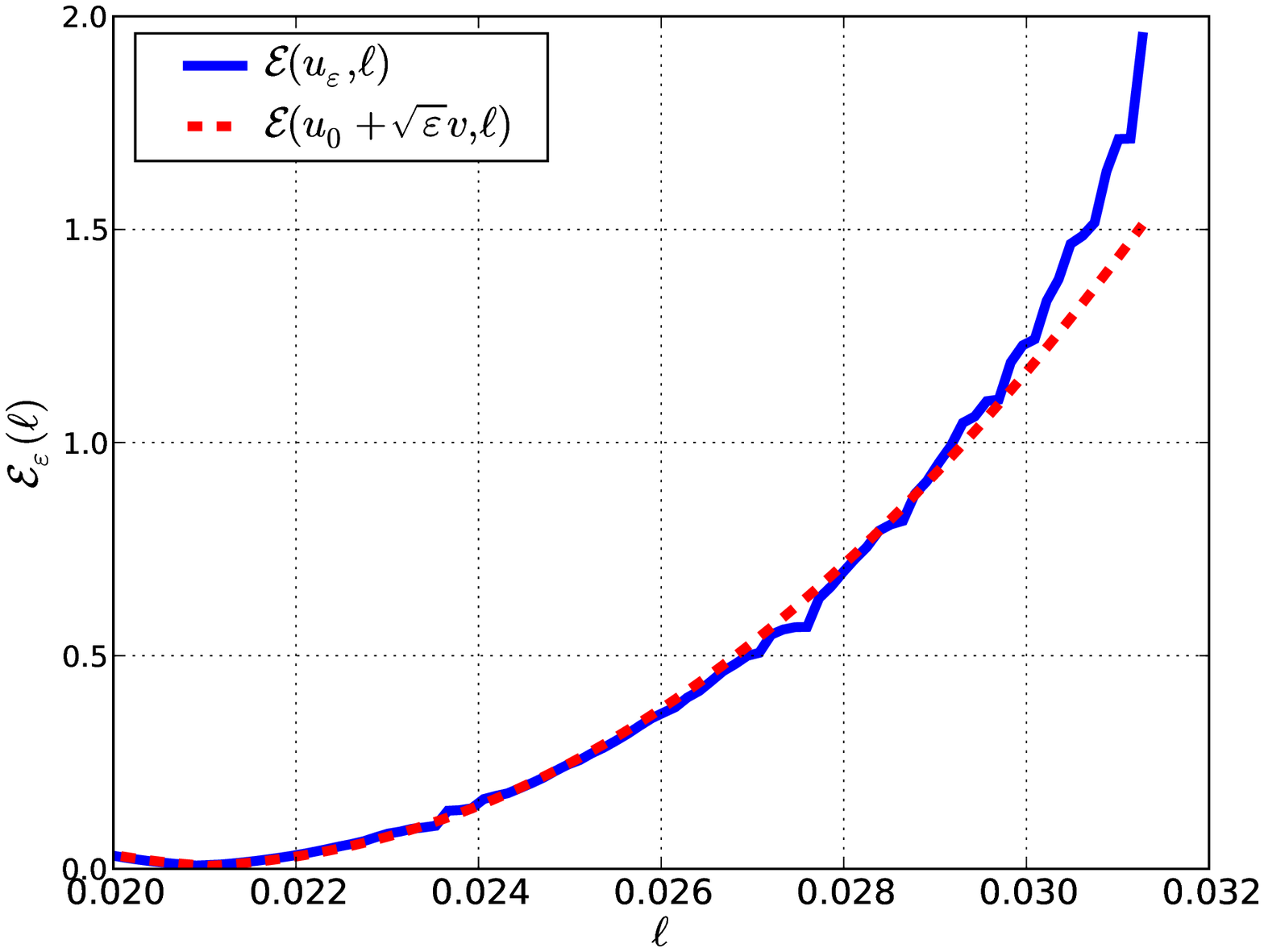}
  \includegraphics[width=0.5\textwidth]{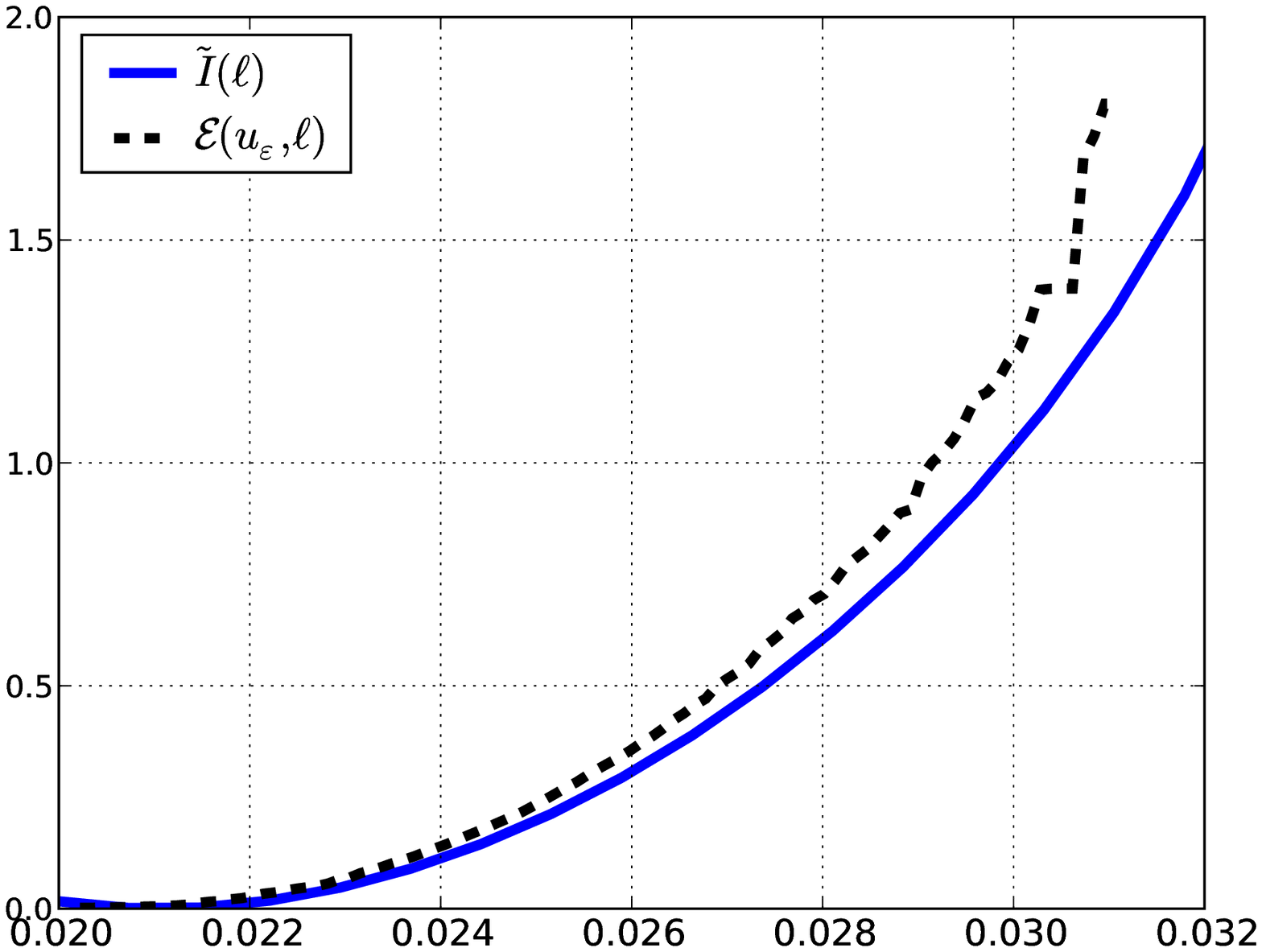}
  \caption{Left: Comparison of empirical rate function for $\ueps$, $\calE(\ueps,\ell)$, with the corrector rate function $(\ell-u_0)^2/(2C_c)$ in the case of the mild medium (figure \ref{fig:parameterized_diffusion_coefficient_comparision} left).  In this case, the corrector captures the large deviation behavior well up until $\approx0.3$ when the true solution nears its theoretical upper bound.
  Right: Comparison of theoretical approximate rate function $\Itilde(\ell)$ with empirical rate function $\calE$ with the mild medium and $\eps=1/100$.  The approximate rate function works quite well for values $\approx 0.30$.  Since the largest possible solution is $\ueps(0.5)\approx0.35$ we consider $0.03\sim O(1)$.}
  \label{fig:parameterized_LDP_theory_mild}
\end{figure}
\begin{figure}[hp!]
  \includegraphics[width=0.5\textwidth]{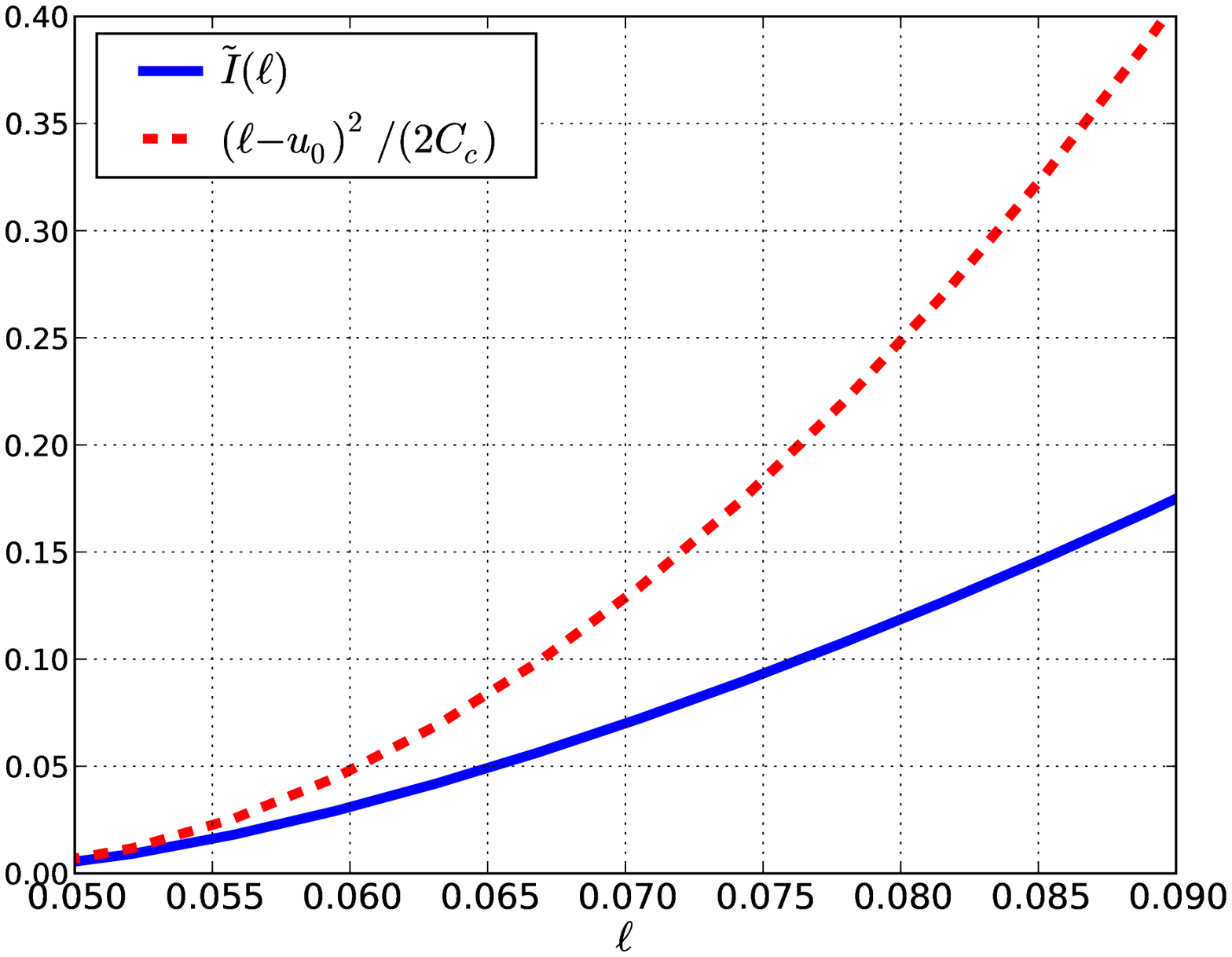}
  \includegraphics[width=0.5\textwidth]{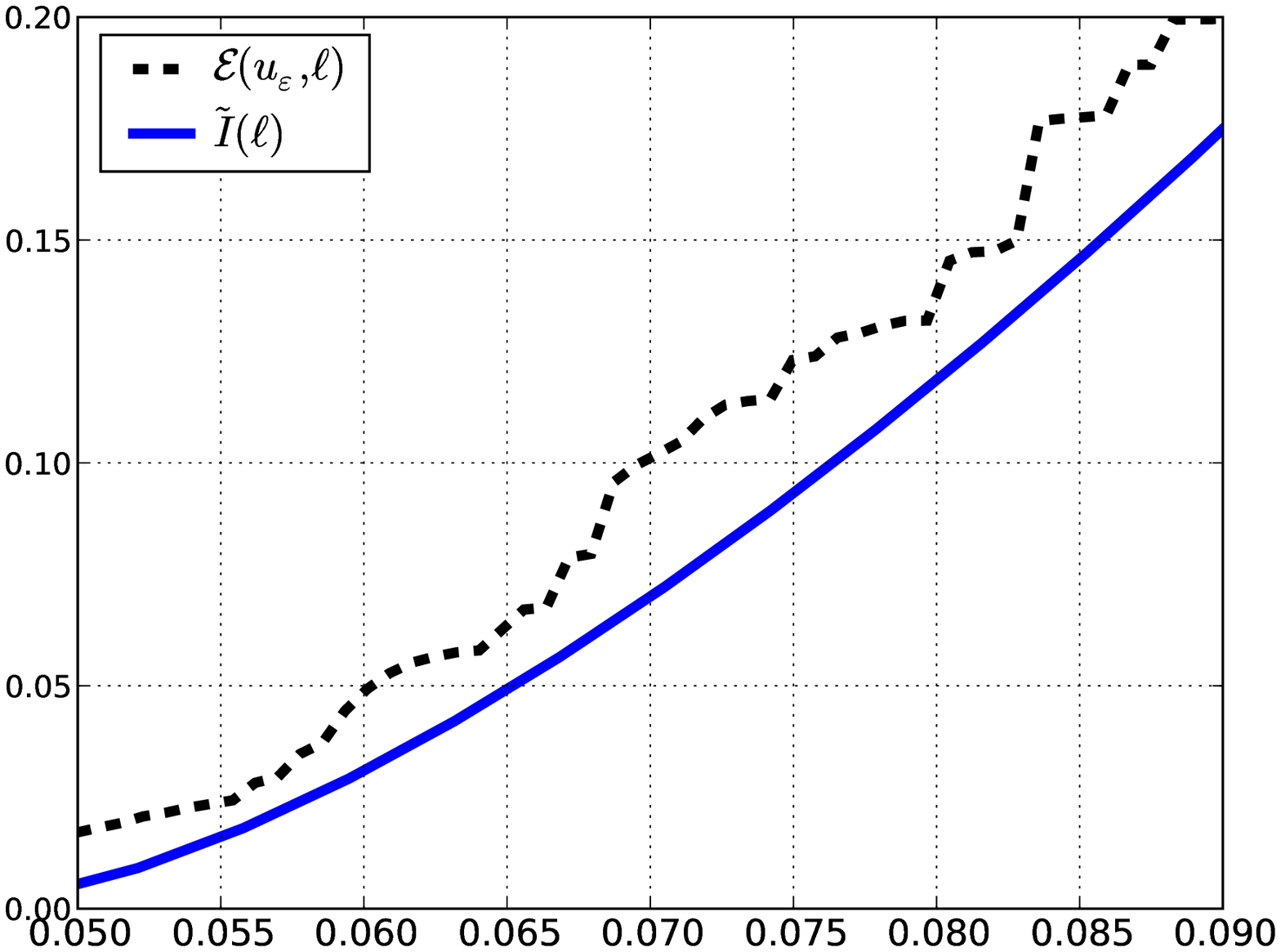}
  \caption{Left: Comparison of empirical rate function for $\ueps$, $\calE(\ueps,\ell)$, with the corrector rate function $\calE(u_0+\sqrt{\eps}v,\ell)$ in the case of the wild medium (figure \ref{fig:parameterized_diffusion_coefficient_comparision} right).  
  Right: Comparison of theoretical approximate rate function $\Itilde(\ell)$ with empirical rate function $\calE(\ueps,\ell)$ with the wild medium and $\eps=1/100$.}
  \label{fig:parameterized_LDP_theory_wild}
\end{figure}

\subsection{Numerical results for convolved media}
\label{subsection:numerical_convolved_media}
Here we implement a particular case of the media described in section \ref{subsubsection:convolved_media}. 
With $\eps^{-1}\in\Nat$, we define
\begin{align*}
  \Aeps(x) &= A(\sovereps), \quad\mbox{where}\quad 
  \frac{1}{A(s)} := \sum_{n=1}^\infty \one_{[n-1, n)}(s) \gamma_n, \\
  \gamma_n :&= \sum_{m=-\infty}^\infty h_{n-m}\beta_m,\\
  h_n&\geq0, \quad\|h\|_1 := \sum_kh_k < \infty,
\end{align*}
and the $\{\beta_m\}_{m=-\infty}^\infty$ are i.i.d. chi-squared random variables with $\xi$ degrees of freedom.  This means
\begin{align*}
  \beta_1&\sim \pi_\beta(\beta\g \xi) =  \frac{\beta^{\xi/2-1}e^{-\beta/2}}{\Gamma(\xi/2)2^{\xi/2}}.
\end{align*}
\begin{figure}[h!]
  \includegraphics[width=0.5\textwidth]{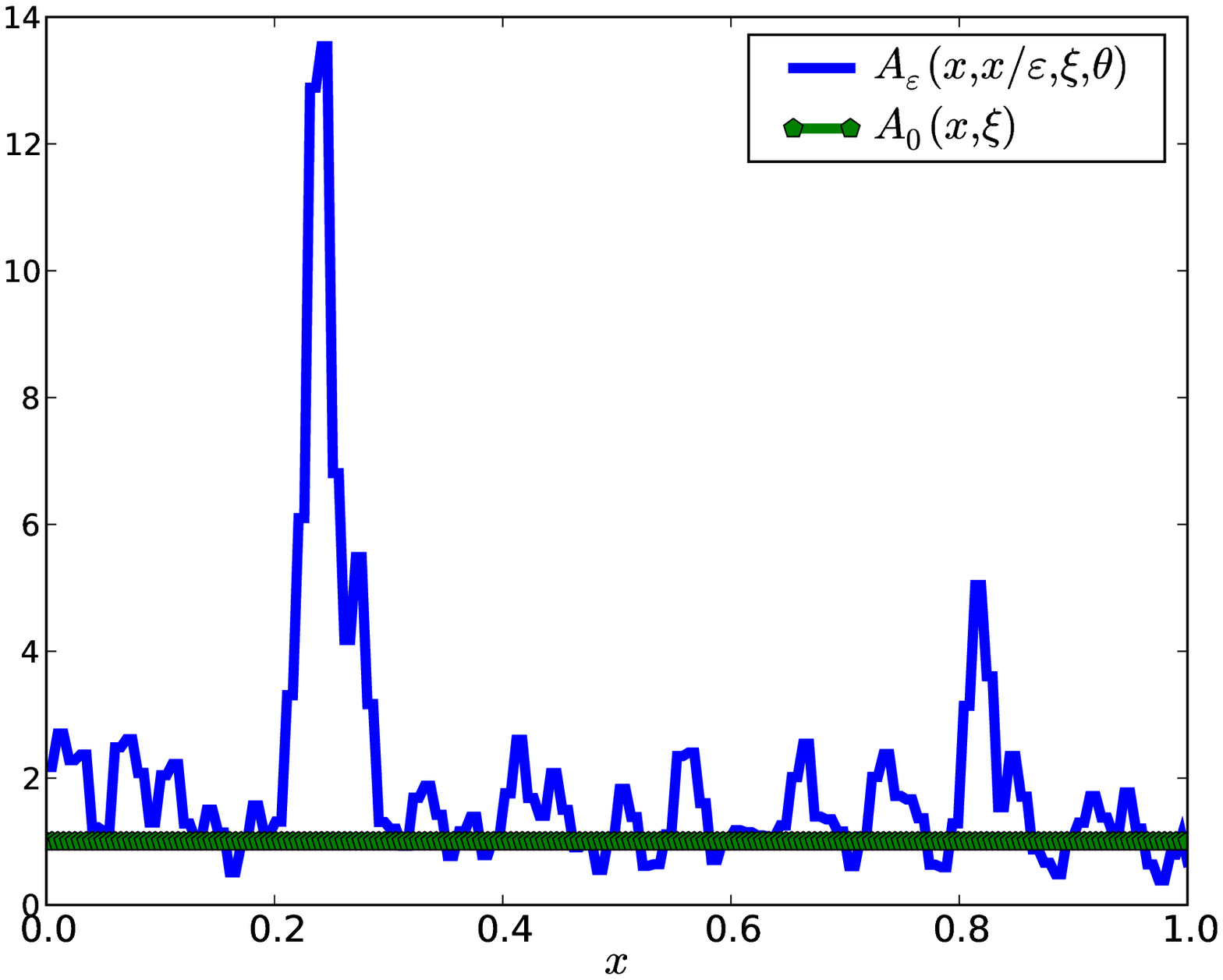}
  \includegraphics[width=0.5\textwidth]{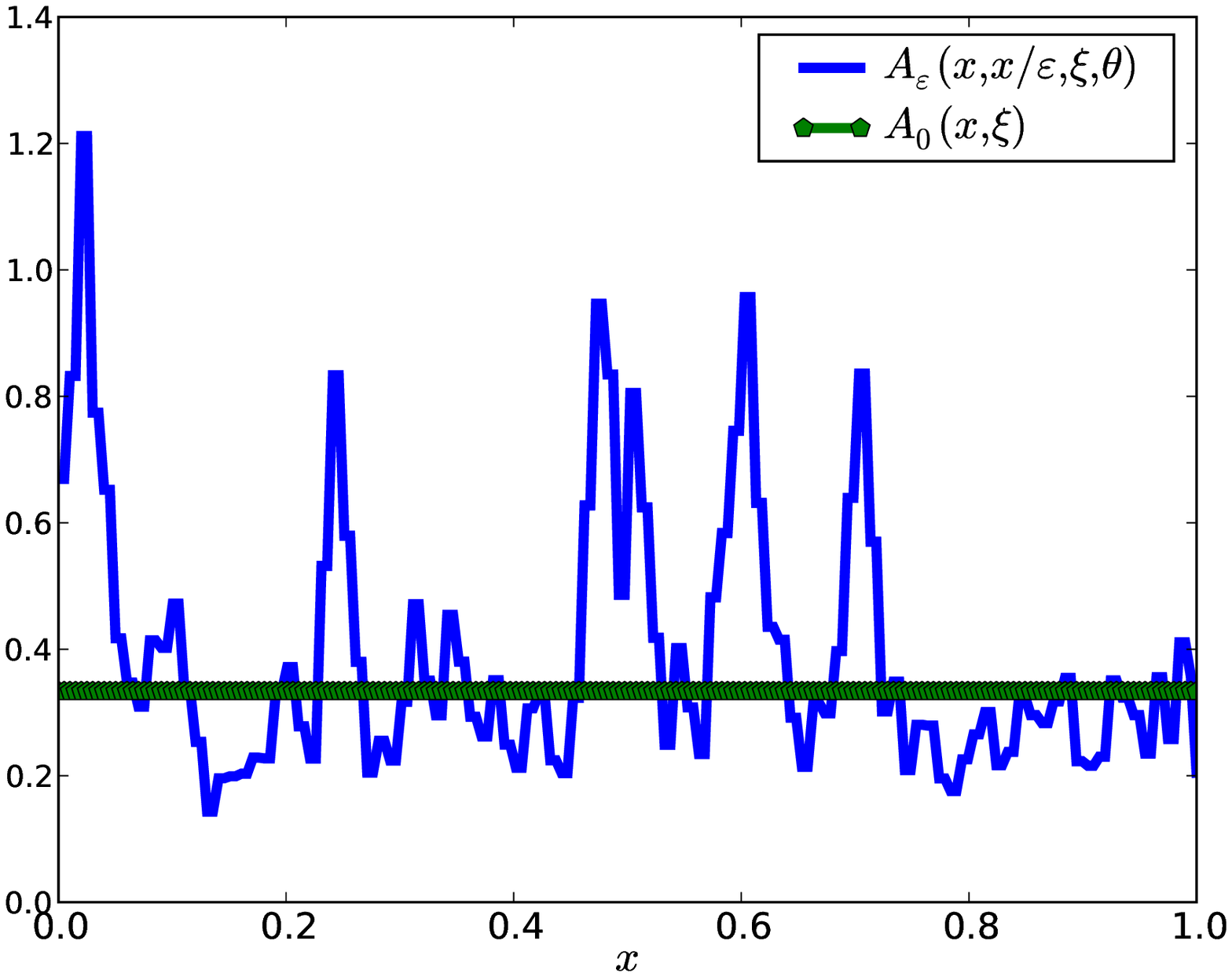}
  \caption{Typical realization of random media when $\xi=1$ (left) and when $\xi=3$ (right).}
  \label{fig:convolved_diffusion_coefficient_comparison}
\end{figure}
The moment generating and characteristic functions of every $\beta_n$ are
\begin{align}
  \Expxi{e^{\lambda\beta}} &= \frac{1}{(1-2\lambda)^{\xi/2}},  \quad \Expxi{e^{it\beta}} = \frac{1}{(1-2it)^{\xi/2}}.
  \label{eq:chi_squared_MGF}
\end{align}
The random variables $\gamma_n$ are well defined since the characteristic function $\phi_M(t):=\prod_{|k|<M}\Exp{e^{ith_{k}\beta}}$ has a continuous limit $\phi(t)$.  Indeed, 
\begin{align*}
  \log\phi_M(t) &= -\frac{\xi}{2}\sum_{|k|<M} \log\left( 1-2ith_k \right).
\end{align*}
This converges absolutely as can be seen using $|\log(1-2ith_k)|\leq C|2th_k|$ and $\|h\|_1<\infty$.

Note that $\Exi\beta_n = \xi$, $\Exi(\beta_n-\xi)^2 = 2\xi$, so 
\begin{align}
  \begin{split}
    \frac{1}{A_0(s)} :&= \Exi{\frac{1}{A(s)}}= \sum_{n=1}^\infty \one_{[n-1, n)}(s) \Exi{\gamma_n}\equiv \xi\|h\|_1\\
        \Covxi(\tau)&=\Expxi{\left( \frac{1}{A(0)}-\Exp{\frac{1}{A(0)}}\right)\left( \frac{1}{A(\tau)}-\Exp{\frac{1}{A(\tau)}}\right)}\\
                    &=2\xi\sum_{k\in\Zint}h_{-k}h_{\lfloor\tau\rfloor-k}.
  \end{split}
  \label{eq:mean_variance}
\end{align}
So the single random variable $\xi$ defines the coarse-scale randomness.  From realization to realization $\xi$ varies with a geometric distribution (with parameter $1/5$) i.e.
\begin{align*}
  \pi_\xi(\xi) &= \frac{1}{5}\left( \frac{4}{5} \right)^{\xi-1}.
\end{align*}

Truncation has no meaning in this context, so
we consider homogenization.  We have
\begin{align}
  A_0^{-1}=\Exp{\Aeps^{-1}}(x) &\equiv \xi\|h\|_1=\mbox{const.},
  \label{eq:A_0_chi_squared}
\end{align}
and then $u_0$ is the solution to
\begin{align*}
  -\xi\|h\|_1\frac{\d^2}{\dx^2} u_0 &= f(x), \quad u_0(0)=u_0(1)=0.
\end{align*}

Using \eqref{eq:mean_variance} we obtain
$\sigma^2(t) = 2\xi\|h\|_1^2$.  The Gaussian corrector is then given by theorem \ref{theorem:corrector}.
\begin{figure}[h]
  \includegraphics[width=0.5\textwidth]{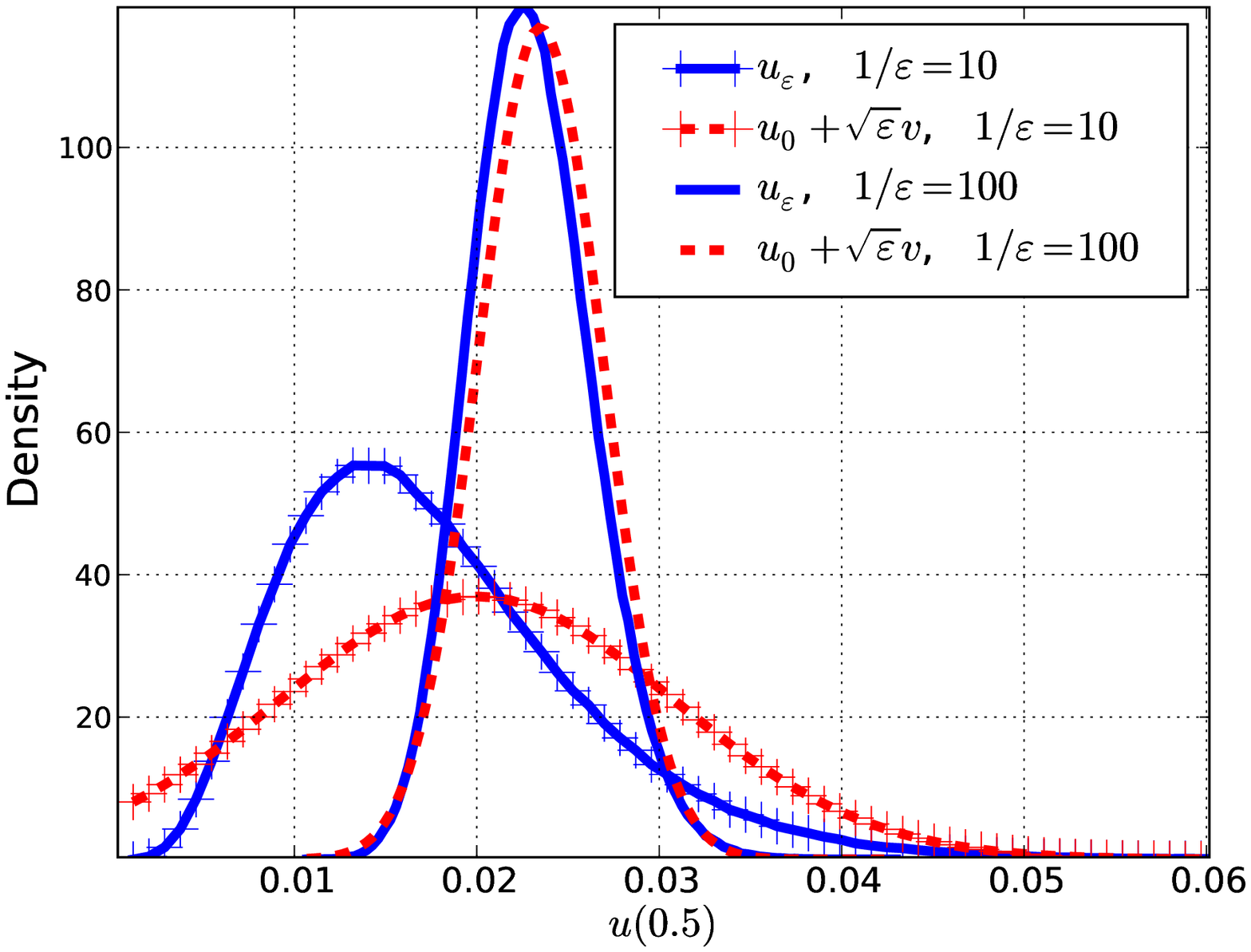}
  \includegraphics[width=0.5\textwidth]{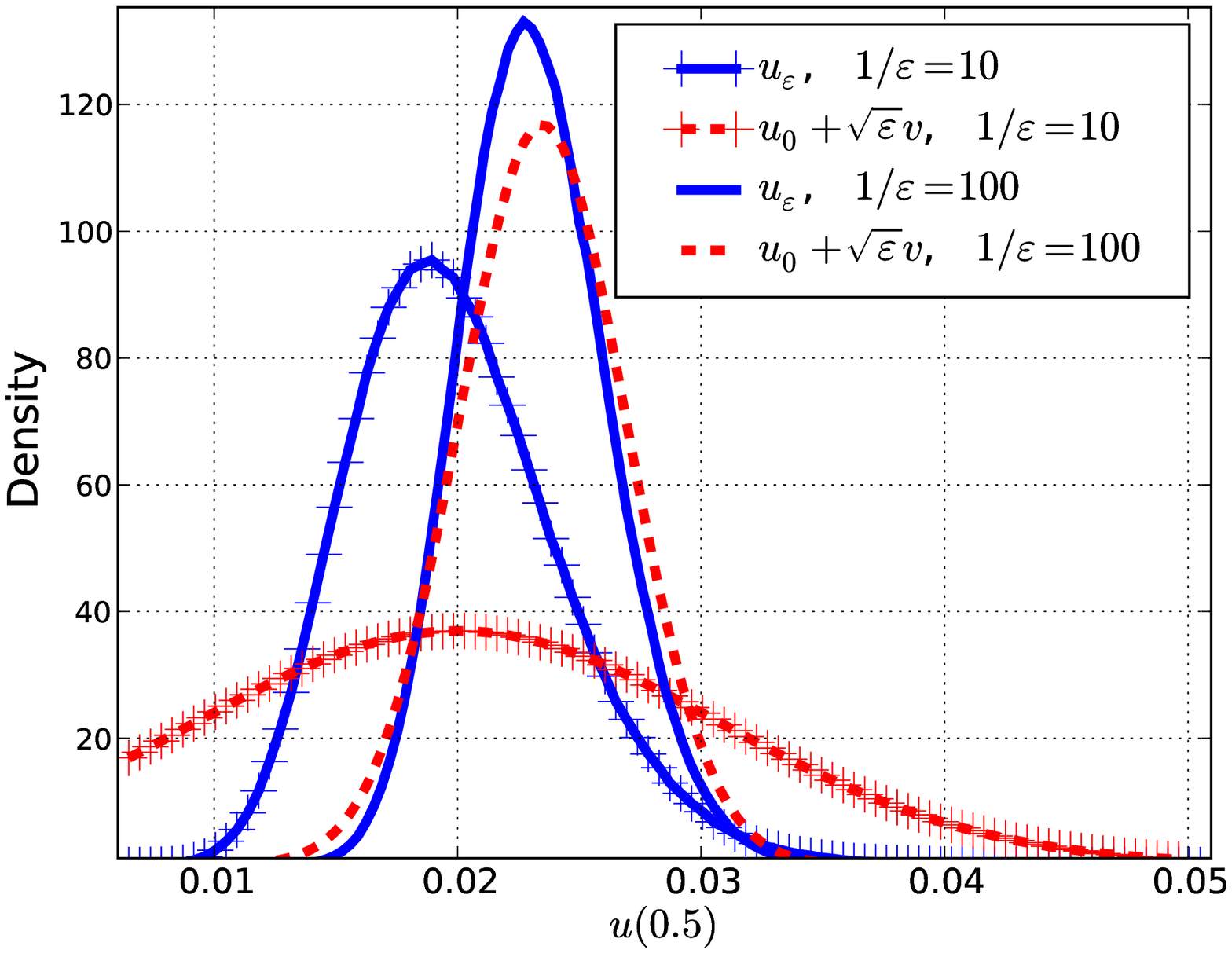}
  \caption{Pdf of $\ueps$ vs. that of $u_0 + \sqrt{\eps}v$ for two values of $\eps$ and the convolved media.  On the left, $\kappa=1$, and on the right $\kappa=10$.  In all cases $\xi=1$.  Shows good agreement once $\eps$ is small enough, although agreement is worse when $\kappa=10$.}
  \label{fig:convolved_CLT}
\end{figure}
The large deviations result and rate function is given by theorem \ref{theorem:LDP_for_convolved_media} with
\begin{align*}
  \Lambda(\lambda) :&= \int_0^1\frac{\xi}{2}\log\frac{1}{1-2\|h\|_1\lambda\cdot\bfH(s)} \ds
\end{align*}
and for the approximate LDP (proposition \ref{proposition:approximate_LDP})
\begin{align*}
  \Lambda(\lambda) :&= \lambda u_0+\int_0^1\left[ -\lambda\frac{G(s)}{A_0(s)} + \frac{\xi}{2}\log\frac{1}{1-2\|h\|_1\lambda G(s)} \right]\ds.
\end{align*}

Figure \ref{fig:convolved_diffusion_coefficient_comparison} shows typical realizations of the diffusion coefficient when the media ``building block'' $\beta$ has $\xi=1$ or $\xi=3$ degrees of freedom.  More degrees of freedom means larger $\ueps$.  The behavior however is not analogous to the ``mild/wild'' comparison of section \ref{subsection:independent_media}.  In particular, the corrector captures the bulk of the distribution (moderate deviations) for all values of $\xi$ so long as $\eps$ is small enough.  For this reason we only picture pdfs for $\xi=1$ (figure \ref{fig:convolved_CLT}).  This is expected since a $\chi^2_\xi$ random variable behaves similar to a Gaussian random variable $\calN(\xi,2\xi)$ when $\xi$ is large.  The media correlation length $\kappa$ does effect the results, and in particular $\kappa$ must be much smaller than $\eps^{-1}$ for the corrector theory to work well.  This is expected since sums of highly correlated random variables tend to a Gaussian at a slower rate than independent ones.
\begin{figure}[h!]
  \includegraphics[width=0.5\textwidth]{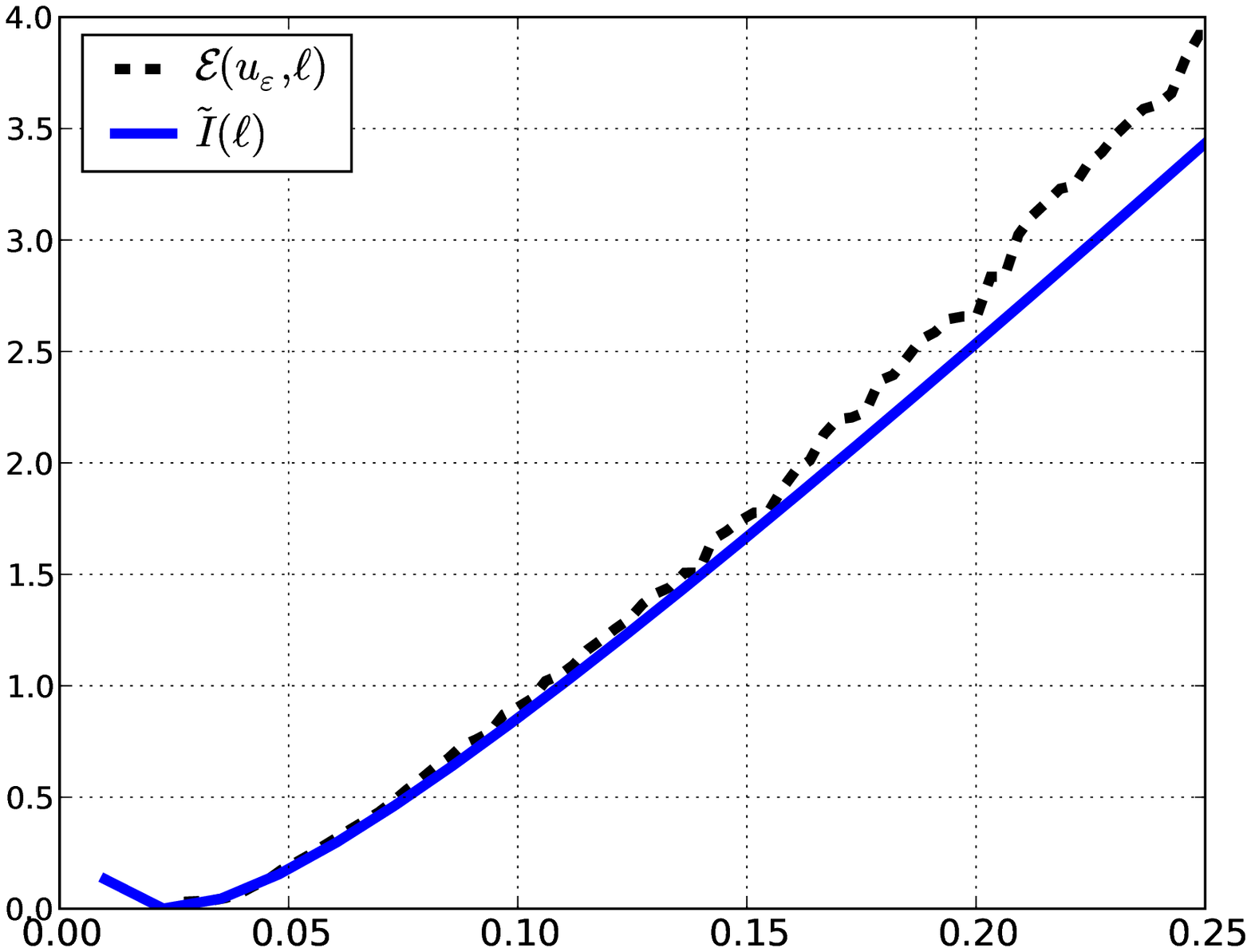}
  \includegraphics[width=0.5\textwidth]{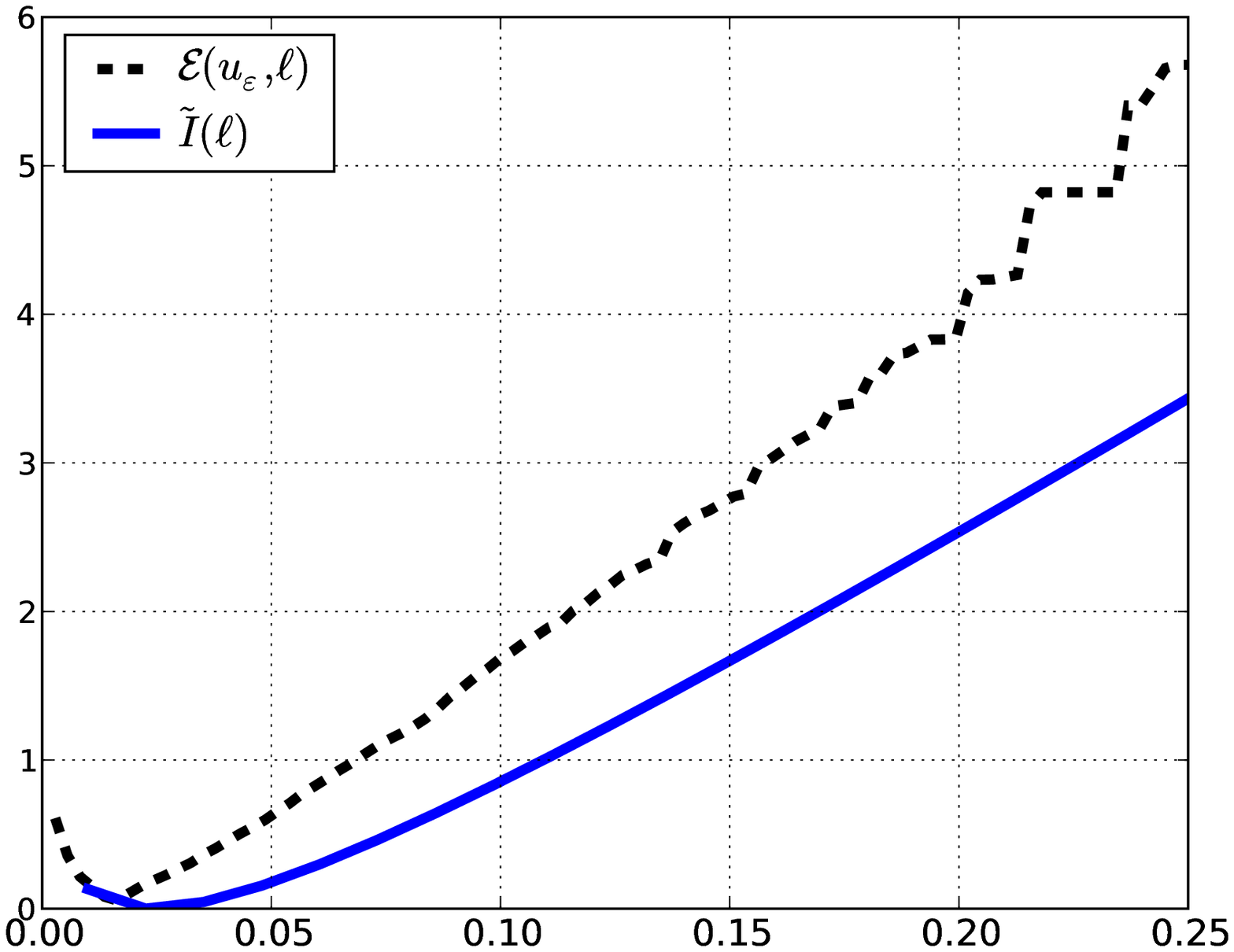}
  \caption{LDP results for the convolved media.  Comparison of empirical rate function $\calE(\ueps,\ell)$ with theoretical (approximate) rate function $\Itilde(\ell)$.  Here $\xi=1$, $\|h\|_1=1$, $\kappa=1$ and $\eps=1/100$ (left) or $\eps=1/10$ (right).}
  \label{fig:convolved_LDP1}
\end{figure}
Figures \ref{fig:convolved_LDP1} and \ref{fig:convolved_LDP2} show that the approximate rate function captures the large-deviation behavior well (for $\ell$ not too large).  As in the case of the wild medium from section \ref{subsection:independent_media} we consider $\ell$ a large deviation if the empirical rate function $\calE(\ueps,\cdot)$ differs significantly from the Gaussian rate function at that point.  In all cases, $\eps$ must be small enough, but when it is the match is good.  
\begin{figure}[h!]
  \includegraphics[width=0.5\textwidth]{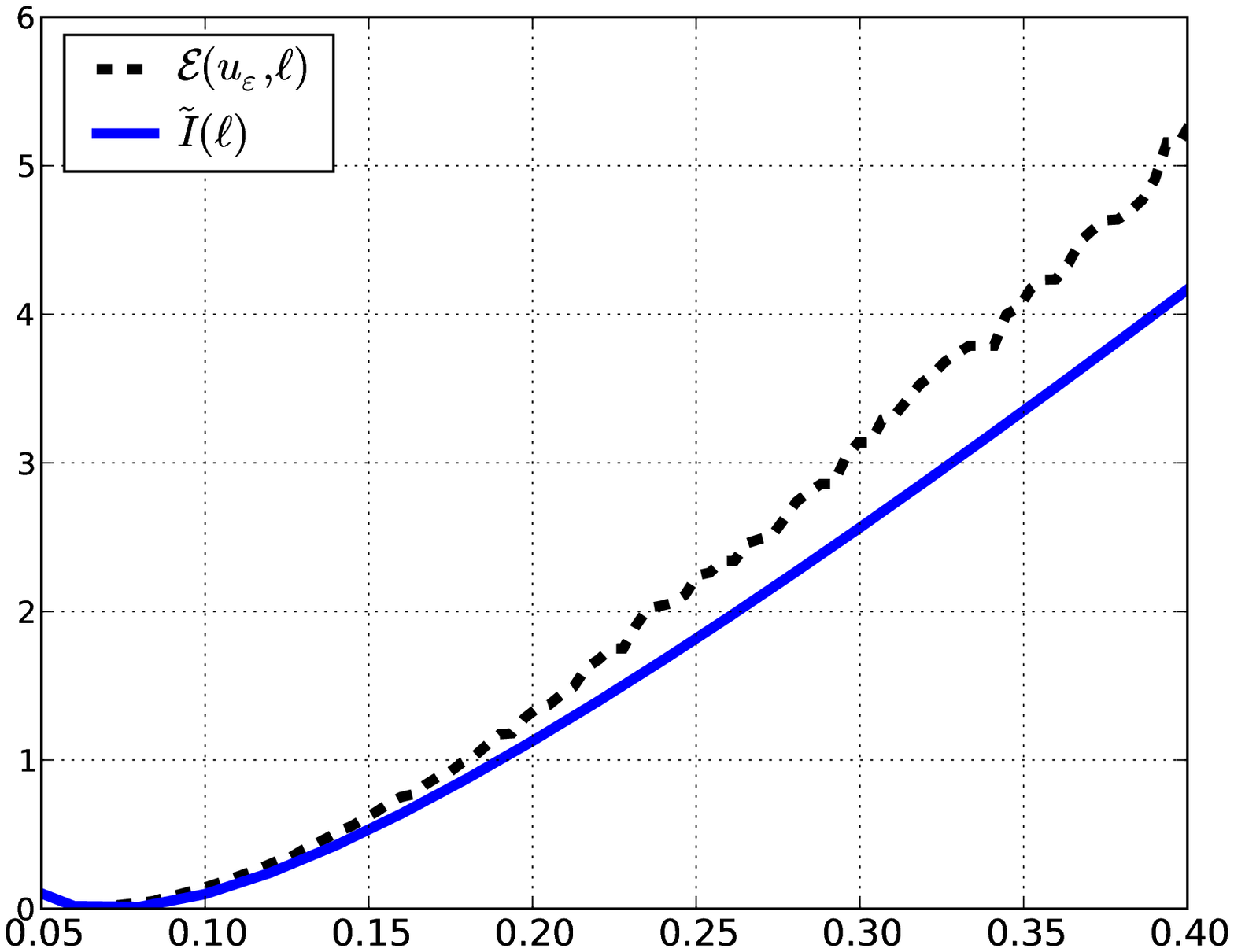}
  \includegraphics[width=0.5\textwidth]{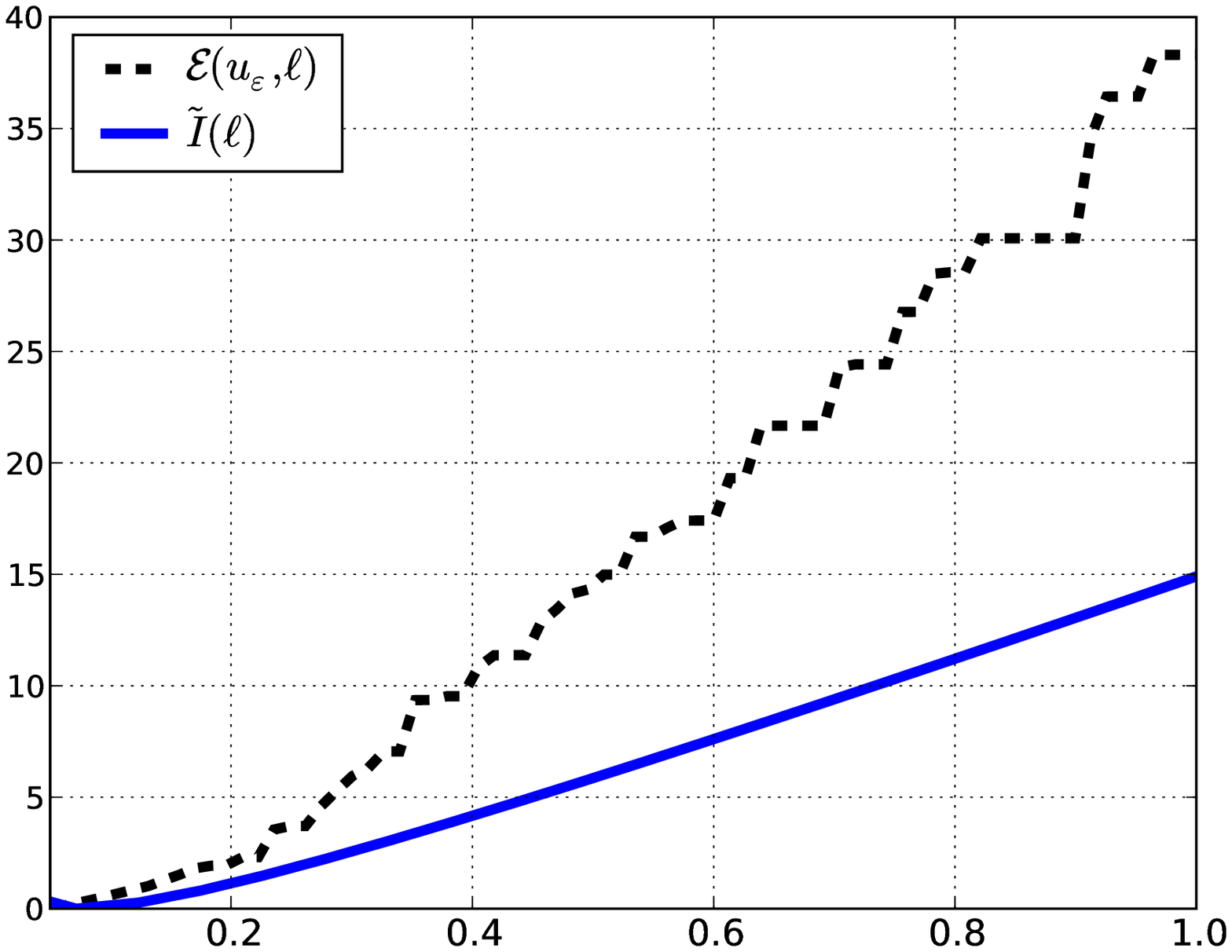}
  \caption{LDP results for the convolved media.  Comparison of empirical rate function $\calE(\ueps,\ell)$ with theoretical (approximate) rate function $\Itilde(\ell)$.  Here $\xi=3$, $\|h\|_1=1$, $\kappa=1$ and $\eps=1/100$ (left) or $\eps=1/10$ (right).  Note that the $\eps=1/100$ plot only has $\ell\in(0,0.4))$ due to the difficulty in sampling the extremely rare event $\ueps\geq0.5$.}
  \label{fig:convolved_LDP2}
\end{figure}
Figure \ref{fig:convolved_LDP3} shows that the corrector cannot capture the large deviation behavior of this media.  As is the case with the parameterized media of section \ref{subsection:independent_media} the approximate rate function does a much better job than the corrector (once $\eps$ is small enough). 
\begin{figure}[h!]
  \includegraphics[width=0.5\textwidth]{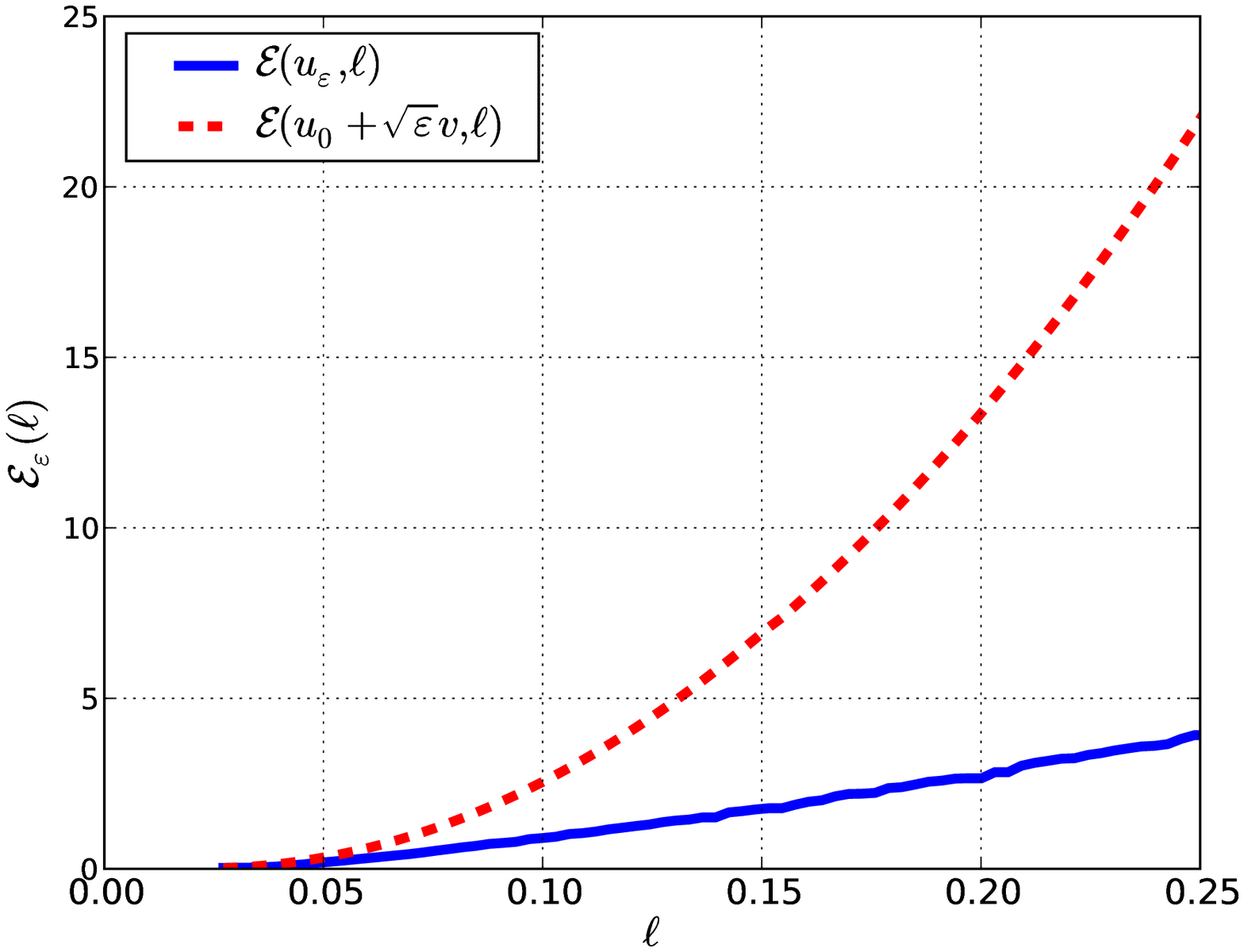}
  \includegraphics[width=0.5\textwidth]{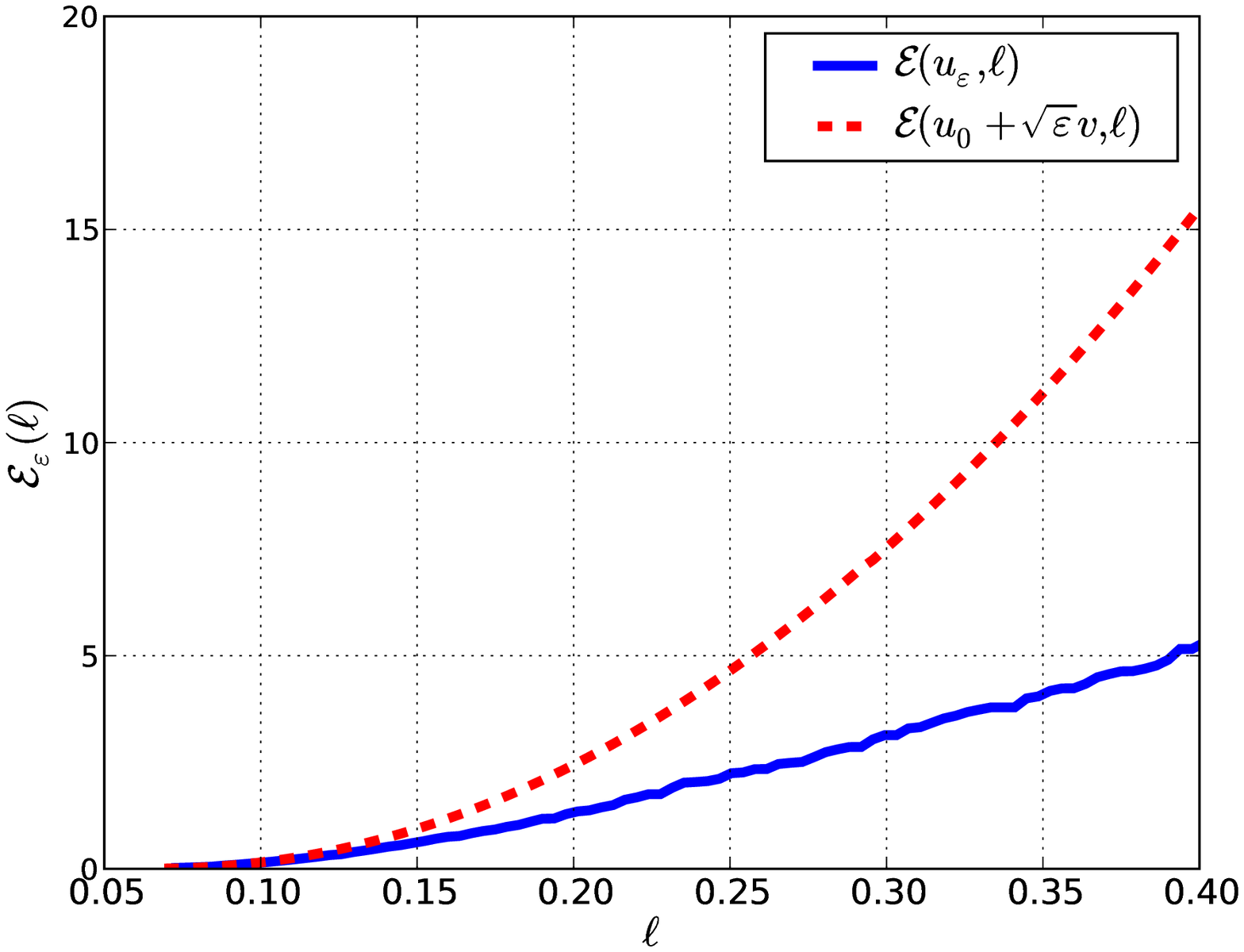}
  \caption{LDP results for the convolved media.  Comparison of empirical rate function $\calE(\ueps,\ell)$ with empirical corrector rate function $\calE(u_0+\eps v,\ell)$.  Here $\|h\|_1=1$, $\kappa=1$, $\eps=1/100$, and $\xi=1$ (left) or $\xi=3$ (right).  In both cases the true rate function is much different than that of the corrector.  The corrector's rate function performs better when $\xi=3$ as can be expected by comparison of the pdf for $\chi^2_3$ and $\chi^2_1$ random variables.}
  \label{fig:convolved_LDP3}
\end{figure}

\section{Proof of Homogenization/Gaussian Corrector Results}
\label{section:homogenization_corrector_proofs}
Here we prove extensions of known results.  
\subsection{Proof of theorem \ref{theorem:convergence}}
\label{subsection:homogenization_theorem_proof}
Here we prove one-dimensional homogenization results for our media, which has no uniform (in $\omega$) ellipticity lower bound and is stationary only in the second variable.

\subsubsection{$L^2$ convergence}
Solving \eqref{eq:basic_elliptic} with $\Aeps=\Aeps$ and again with $\Aeps=A_0$ obtain solutions such as \eqref{eq:basic_elliptic_solution}.  Subtracting them we have
\begin{align}
  \begin{split}
    u_0(x)-\ueps(x) &= \int_0^x F(s)\left[ \frac{1}{\Aeps(s)}-\frac{1}{A_0(s)} \right]\ds \\
    &\quad+ \int_0^x\left[ \frac{\brac{F/A_0}}{\brac{1/A_0}}\frac{1}{A_0(s)} - \frac{\brac{F/\Aeps}}{\brac{1/\Aeps}}\frac{1}{\Aeps(s)} \right]\ds \\
    &= I_1(x) + I_2(x).
  \end{split}
  \label{eq:I_decomposition}
\end{align}

We can re-write $I_1$ as
\begin{align*}
  I_1(x) &= \brac{F\one_{[0,x]}/\Aeps} - \brac{F\one_{[0,x]}/A_0}.
\end{align*}
We are thus motivated to prove the following lemma
\begin{lemma}
  Let $H\in L^\infty[0,1]$ be deterministic, then
  \begin{align*}
    \Exi\left( \brac{H/\Aeps}-\brac{H/A_0} \right)^2 &\leq \eps\|H\|_{L^\infty}^2C_{A_{-1}}.
  \end{align*}
  \label{lemma:oscillatory_integral_bound}
\end{lemma}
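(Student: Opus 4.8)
The plan is to expand the square, pass to a double integral, recognize its kernel as the conditional covariance of $A^{-1}$ at a rescaled lag, and then invoke the $L^1$ control \eqref{eq:cov_pointwise_bound} --- in effect reproducing the computation that yields \eqref{eq:covariance_integral_bound}.

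First I would write
\[
  \brac{H/\Aeps}-\brac{H/A_0} = \int_0^1 H(s)\, q_\eps(s)\ds,\qquad q_\eps(s):=\frac{1}{\Aeps(s)}-\frac{1}{A_0(s)},
\]
and record that $\Expxi{q_\eps(s)}=0$, since by hypothesis the conditional mean $\Expxi{A^{-1}(s,y)}=A_0(s)^{-1}$ does not depend on the fast variable $y$. Squaring, taking $\Exi$, and interchanging expectation and integration (legitimate because $\Exi|q_\eps(s_1)q_\eps(s_2)|\leq\Covxi(s_1,s_1,0)^{1/2}\Covxi(s_2,s_2,0)^{1/2}$ is integrable on $[0,1]^2$ under the standing moment assumptions), one obtains
\[
  \Exi\left(\brac{H/\Aeps}-\brac{H/A_0}\right)^2 = \int_0^1\int_0^1 H(s_1)H(s_2)\,\Exi\!\left[q_\eps(s_1)q_\eps(s_2)\right]\ds_1\ds_2.
\]

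The key step is the identification of this kernel with the conditional covariance: taking $y=s_1/\eps$ and lag $z=(s_2-s_1)/\eps$ in the definition of $\Covxi$, and using that $\Covxi$ is independent of $y$, one gets $\Exi[q_\eps(s_1)q_\eps(s_2)]=\Covxi(s_1,s_2,(s_2-s_1)/\eps)$. I would then bound $|H(s_i)|\leq\|H\|_{L^\infty}$ and $|\Covxi(s_1,s_2,z)|\leq\Gamma(z)$ via \eqref{eq:cov_pointwise_bound} (extending $\Gamma$ evenly if needed), and rescale $s_i\mapsto\eps s_i$: this maps $[0,1]^2$ onto $[0,\eps^{-1}]^2$ with Jacobian $\eps^2$, the inner integral is bounded by $\|\Gamma\|_{L^1}=C_{A^{-1}}$, and the remaining factor is $\eps^2\cdot\eps^{-1}=\eps$. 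This is precisely \eqref{eq:covariance_integral_bound}, and it yields $\Exi(\brac{H/\Aeps}-\brac{H/A_0})^2\leq\eps\|H\|_{L^\infty}^2 C_{A^{-1}}$.

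There is no deep obstacle: once the structure is in place the estimate is a few lines. The two points that deserve attention are (i) the identity $\Exi[q_\eps(s_1)q_\eps(s_2)]=\Covxi(s_1,s_2,(s_2-s_1)/\eps)$, which is exactly where weak stationarity --- the $y$-independence of the conditional covariance --- enters, and (ii) justifying the interchange of expectation and integration, which follows from finiteness of the conditional second moments of $A^{-1}$. Applied with $H=F\one_{[0,x]}$ this controls the term $I_1$ in \eqref{eq:I_decomposition}; an analogous rearrangement of $I_2$ then produces the $\sqrt{\eps}$ rate in theorem \ref{theorem:convergence}.
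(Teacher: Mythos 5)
Your proof is correct and is essentially the paper's own argument: both expand the square into a double integral, identify the kernel as $\Covxi(s_1,s_2,(s_2-s_1)/\eps)$ via the $y$-independence of the conditional covariance, and rescale to $[0,\eps^{-1}]^2$ so that \eqref{eq:cov_pointwise_bound} delivers the factor $\eps\,C_{A^{-1}}$ exactly as in \eqref{eq:covariance_integral_bound}. The only difference is cosmetic (you identify the covariance before changing variables; the paper does it after), and your added remarks on the Fubini justification are fine but not needed beyond what the paper assumes.
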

\begin{proof}
  Write
  \begin{align*}
    &\Exi\left( \brac{H/\Aeps}-\brac{H/A_0} \right)^2 \\
    &= \eps^2\int_0^{\eps^{-1}}\int_0^{\eps^{-1}} \dt\ds H(s)H(t)
    \Expxi{\left( \frac{1}{A(s\eps,s)} - \Expxi{1/A}(s\eps) \right)\left( \frac{1}{A(t\eps,t)}-\Expxi{1/A}(t\eps) \right)}\\
    &\quad= \eps^2 \int_0^{\eps^{-1}}\int_0^{\eps^{-1}} H(s)H(t)\Covxi(s\eps,t\eps,s-t)\dt\ds 
    \leq \eps \|H\|_{L^\infty}^2 C_{A^{-1}}.
  \end{align*}
\end{proof}
Since $\|F\|_{L^\infty} \leq\|f\|_{L^2}$, lemma \ref{lemma:oscillatory_integral_bound} gives us
\begin{align}
  \Exi\|I_1\|_{L^2}^2 &\leq \eps\|f\|_{L^2}^2C_{A^{-1}},
  \label{eq:I1_bound}
\end{align}

We now consider $I_2$.  After repeated use of the equality $ab - \tilde a \tilde b = (a-\tilde a)b+\tilde a(b-\tilde b)$ we obtain
\begin{align*}
  I_2(x) &= \left( \brac{F/A_0}-\brac{F/\Aeps} \right)\left( \int_0^x\frac{1}{A_0(s)\brac{1/A_0}}\ds \right) \\
  &\quad+\left( \brac{1/\Aeps} - \brac{1/A_0} \right) \left( \frac{1}{\brac{1/A_0}\brac{1/\Aeps}}\int_0^x\frac{\brac{F/\Aeps}}{A_0}\ds \right)\\
  &\quad+\left( \int_0^x\left[ \frac{1}{A_0(s)}-\frac{1}{\Aeps(s)} \right]\ds \right)\left( \frac{\brac{F/\Aeps}}{\brac{1/\Aeps}} \right)\\
  &=I_{21}(x) + I_{22}(x) + I_{23}(x).
\end{align*}
Each term is the product of a term similar to $I_1$ and a bounded random variable (\emph{a priori} bounds obtained using the positivity of $A$).  We thus obtain
\begin{align}
  \begin{split}
    \sqrt{\Exi\|I_2\|_{L^2}^2} &\leq \sqrt{\Exi\|I_{21}\|_{L^2}^2} + \sqrt{\Exi\|I_{22}\|_{L^2}^2} + \sqrt{\Exi\|I_{23}\|_{L^2}^2} \\
    &\leq 3\sqrt{\eps}\|f\|_{L^2}\sqrt{C_{A^{-1}}}.
  \end{split}
  \label{eq:I2_bound}
\end{align}

The inequality in theorem \ref{theorem:convergence} thus follows from \eqref{eq:I_decomposition}, \eqref{eq:I1_bound}, and \eqref{eq:I2_bound}.

\subsubsection{\as convergence}
Here we use the same decomposition, 
\begin{align*}
  \ueps(x) - u_0(x) &= I_1(x) + I_{21}(x) + I_{22}(x) + I_{23}(x),
\end{align*}
and show that each term goes to zero \as  To that end, we notice that every term is the product of a bounded (sometimes random) variable, and a term like $\brac{H/\Aeps} - \brac{H/A_0}$ (with $H$ depending on $x$).  It will thus suffice to prove \as convergence of this latter term.  We thus obtain pointwise (in $x$) \as convergence.  The \as norm convergence then follows from an \emph{a-priori} bound on every realization of $\ueps-u_0$ and the bounded convergence theorem.  The \emph{a-priori} bound follows from \eqref{eq:ellipticity_condition} (which we assume here is independent of $\eps$) and \eqref{eq:I_decomposition}.  In other words, \as convergence in theorem \ref{theorem:convergence} is a corollary of the following lemma.

\begin{lemma}
  Suppose $H\in L^\infty$ is deterministic, then as $\tau\to\infty$, we have (almost surely $\Pxi$)
  \begin{align*}
    \brac{H/A_{\tau^{-1}}} - \brac{H/A_0} &\to 0.
  \end{align*}
  \label{lemma:as_convergence}
\end{lemma}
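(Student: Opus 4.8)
The plan is to upgrade the $L^2$ estimate of Lemma~\ref{lemma:oscillatory_integral_bound} to almost sure convergence by a Borel--Cantelli argument along a polynomially spaced subsequence, and then to fill the gaps by rescaling the oscillatory integral so that only its ``slow'' argument varies.

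Write $X_\tau := \brac{H/A_{\tau^{-1}}} - \brac{H/A_0}$. Lemma~\ref{lemma:oscillatory_integral_bound}, applied with $\eps = \tau^{-1}$, gives $\Expxi{X_\tau^2}\leq \|H\|_{L^\infty}^2 C_{A^{-1}}\tau^{-1}$. Along the subsequence $\tau_N := N^2$, $N\in\Nat$, this is summable, so $\Expxi{\sum_N X_{\tau_N}^2}<\infty$ by monotone convergence, whence $\sum_N X_{\tau_N}^2<\infty$ and in particular $X_{\tau_N}\to 0$, almost surely $\Pxi$. What remains is to show $\sup_{\tau\in[\tau_N,\tau_{N+1}]}|X_\tau - X_{\tau_N}|\to 0$ almost surely.

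For the interpolation I would use the substitution $z = y\tau$ to write $\brac{H/A_{\tau^{-1}}} = \tau^{-1}\int_0^\tau H(z/\tau)\,A(z/\tau,z)^{-1}\dz$, so that the oscillating argument is simply the integration variable $z$ while the ``slow'' argument is $z/\tau$. For $\tau\in[\tau_N,\tau_{N+1}]$ the difference $X_\tau - X_{\tau_N}$ then splits into: (a) boundary contributions supported on $z\in[\tau_N,\tau]$ plus a term coming from the $\tau^{-1}$ versus $\tau_N^{-1}$ normalization, each bounded by $\frac{\tau-\tau_N}{\tau}\|H\|_{L^\infty}\nu_1(\omega)^{-1} = O(N^{-1})$ once the ellipticity bound \eqref{eq:ellipticity_condition} is taken independent of $\eps$; and (b) the bulk term $\tau_N^{-1}\int_0^{\tau_N}\big|H(z/\tau)A(z/\tau,z)^{-1} - H(z/\tau_N)A(z/\tau_N,z)^{-1}\big|\dz$, in which the two slow arguments differ by at most $\delta_N := (\tau_{N+1}-\tau_N)/\tau_N = (2N+1)/N^2\to 0$, uniformly in $z\in[0,\tau_N]$. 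Approximating $H$ in $L^1([0,1])$ by a continuous $\tilde H$ — which perturbs $X_\tau$ by at most $C(\omega)\|H-\tilde H\|_{L^1}$, uniformly in $\tau$, again by boundedness of $A^{-1}$ and $A_0^{-1}$ — the bulk term is controlled by $\nu_1(\omega)^{-1}$ times the modulus of continuity of $\tilde H$ at scale $\delta_N$, plus $\|H\|_{L^\infty}\nu_1(\omega)^{-2}$ times the slow-variable modulus of continuity of $A^{-1}$; for the media \eqref{eq:parameterized_media_definition} this last modulus is that of $a(\cdot,\xi)$, which is uniformly continuous on $[0,1]$ by assumption, and for \eqref{eq:Aeps_convolved} it vanishes identically. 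Letting $N\to\infty$ and then refining the approximation $\tilde H\to H$ in $L^1$ gives $\limsup_{\tau\to\infty}|X_\tau|=0$ almost surely $\Pxi$.

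I expect the interpolation to be the only real obstacle. The $L^2$ rate is exactly $\tau^{-1}$, so any directly summable subsequence must have gaps $\tau_{N+1}-\tau_N$ of order $N$, over which the \emph{fast} variable $y\tau$ moves by an unbounded amount; one therefore cannot compare $X_\tau$ to $X_{\tau_N}$ naively, and the point of the rescaling is precisely that it converts the comparison into a small perturbation of the slow argument alone. This step is also where the proof genuinely requires the $\eps$-independent ellipticity (a uniform-in-$\tau$, almost sure bound on $A^{-1}$) together with some slow-variable regularity of the medium. For the i.i.d.-block media of Sections~\ref{subsubsection:parameterized_media} and \ref{subsubsection:convolved_media} one could alternatively bypass the interpolation altogether by invoking the continuous-time ergodic theorem for the shift acting on the high-frequency parameters, after the same rescaling.
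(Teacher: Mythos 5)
Your overall strategy is the same as the paper's: prove a.s. convergence along a polynomially spaced subsequence using the $L^2$ bound of Lemma \ref{lemma:oscillatory_integral_bound} (you use summability of second moments where the paper uses Chebyshev plus Borel--Cantelli along $\tau=m^a$, $a>1$ --- equivalent), then control the fluctuation over each gap after rescaling so that the fast argument becomes the integration variable. The genuine difference is in the gap control. The paper treats the gap term stochastically: it isolates the boundary contribution $\frac{1}{m^a}\int_{m^a}^{\tau}X(s/\tau,s)\ds$, takes the supremum over the gap, and bounds its second moment by $\bigl((m+1)^a-m^a\bigr)^2/m^{2a}\lesssim m^{-2}$, which uses only the covariance bound \eqref{eq:cov_pointwise_bound} --- no ellipticity and no pathwise regularity of the medium. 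You bound the boundary and normalization pieces deterministically by $O(N^{-1})\nu_1(\omega)^{-1}$, which forces you to assume \eqref{eq:ellipticity_condition} with $\nu_1$ independent of $\eps$; the paper's route gets the lemma (and hence the pointwise a.s.\ statement of Theorem \ref{theorem:convergence}) without that assumption. Your deterministic bound is simpler and avoids a second Borel--Cantelli pass, but it buys less generality.

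The one step that exceeds the lemma's hypotheses is your bulk term. Your observation that the rescaling converts the gap comparison into a perturbation of the slow argument by $\delta_N\to0$ is exactly right, and your $L^1$-approximation of $H$ by a continuous $\tilde H$ handles the $H$-part correctly. But bounding $|A(z/\tau,z)^{-1}-A(z/\tau_N,z)^{-1}|$ by a slow-variable modulus of continuity requires pathwise, uniform-in-$y$ continuity of $x\mapsto A(x,y)^{-1}$, which is not assumed in Section \ref{section:basis_reduction_1d} (only weak stationarity in $y$, \eqref{eq:ellipticity_condition}, and \eqref{eq:cov_pointwise_bound} are); you correctly note it holds for the media \eqref{eq:parameterized_media_definition} and \eqref{eq:Aeps_convolved}, so your argument proves the lemma for the paper's model problems but not in the stated generality. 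In fairness, the paper's own proof silently drops exactly this term: the identity $\frac{\tau}{m^a}Y(\tau)=Y(m^a)+Z(m^a,\tau)$ replaces $X(s/\tau,s)$ by $X(s/m^a,s)$ on $[0,m^a]$ without justification, so your proposal makes explicit a hypothesis the paper implicitly uses. For the fully general statement one would instead have to estimate this bulk discrepancy in $L^2$ (under, say, mean-square continuity of $x\mapsto A(x,0)^{-1}$, as assumed later in Theorem \ref{theorem:corrector}) rather than pathwise.
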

\begin{proof}
  The proof is more-or-less a standard trick where we show \as convergence on a sequence of $\tau$ values as well as the difference between the sequence values and ``nearby'' values.  See section 37.7 in \cite{Loeve_Probability_II}.

  We handle the first convergent term first.
  Defining
  \begin{align*}
    Y(\tau) :&= \brac{H/A_{\tau^{-1}}} - \brac{H/A_0} \\
    X(t,s) :&= H(t)\left( \frac{1}{A(t,s)}-\Expxi{1/A}(t) \right),
  \end{align*}
  we have
  \begin{align*}
    Y(\tau)&= \int_0^1 H(s)\left( \frac{1}{A(s,s\tau)}-\Expxi{1/A}(s) \right)\ds \\
    &=\frac{1}{\tau}\int_0^\tau H(s/\tau)\left( \frac{1}{A(s/\tau,s)} - \Expxi{1/A}(s/\tau) \right)\ds
    = \frac{1}{\tau}\int_0^\tau X(s/\tau,s)\ds.
  \end{align*}

  Now for $m\in\Nat$, $a>0$, and $m^a\leq\tau<(m+1)^a$,
  \begin{align*}
    \frac{\tau}{m^a}Y(\tau) &= Y(m^a) + Z(m^a,\tau),\qquad
    Z(m^a,\tau) := \frac{1}{m^a}\int_{m^a}^\tau X(s/\tau,s)\ds.
  \end{align*}

  Directly from lemma \ref{lemma:oscillatory_integral_bound} we have
  \begin{align*}
    \Expxi{Y(m^a)^2} &\leq \|H\|_{L^\infty}^2C_{A^{-1}} m^{-a},
  \end{align*}
  and then Chebyshev inequality gives us, for any $\delta>0$,
  \begin{align*}
    \Pxi[|Y(m^a)|>\delta] &\leq \frac{\Expxi{Y(m^a)^2}}{\delta^2}\leq\frac{\|H\|_{L^\infty}^2C_{A^{-1}}m^{-a}}{\delta^2}.
  \end{align*}
  We choose $a>1$ and then
  \begin{align*}
    &\sum_{m=1}^\infty \Pxi[|Y(m^a)|>\delta] < \infty.
  \end{align*}
  So by the Borel-Cantelli lemma, $Y(m^\alpha)\to0$ $\Pxi$ \as

  As for $Z(m^a,\tau)$, we set
  \begin{align*}
    U(m^a) :&= \sup_{m^a\leq\tau<(m+1)^a} |Z(m^a,\tau)| \leq \frac{1}{m^a}\int_{m^a}^{(m+1)^a}|X(s/\tau,s)|\ds,
  \end{align*}
  and note that
  \begin{align*}
    \Exi |U(m^a)|^2 &\leq \frac{\|H\|_\infty^2}{m^{2a}} \int_{m^a}^{(m+1)^a}\int_{m^a}^{(m+1)^a}C(s-s')\ds\ds'\\
    &\leq \|H\|_\infty^2 C_{A^{-1}}\frac{(m+1)^a-m^a}{m^a} \leq \|H\|_\infty^2C_{A^{-1}}\frac{\lceil a+1\rceil !}{m}.
  \end{align*}
  Therefore $\Expxi{U(m^a)^2}\lesssim m^{-2}$, and by Chebyshev's inequality and the Borel-Cantelli lemma we obtain $U(m^a)\to0$ $\Pxi$ \as  The same conclusion thus holds for $Y(\tau)$, $m^a\leq\tau<(m+1)^a$, and therefore for $Y(\tau)=\brac{H/A_{\tau^{-1}}} - \brac{H/A_0}$.
\end{proof}

\subsection{Proof of theorem \ref{theorem:corrector}}
\label{subsection:corrector_proof}
Here we prove one-dimensional corrector results for our media, which has no uniform (in $\omega$) ellipticity lower bound and is stationary only in the second variable.
\subsubsection{One dimensional oscillatory integral}
Here we study the integral
\begin{align*}
  \veps(x) :&= \int_0^1 G(x,t)\qeps(t)\dt,\quad
  \qeps(t) := \frac{1}{A(t,\frac{t}{\eps})} - \Expxi{1/A}(t),
\end{align*}
where $G(x,s)$ is deterministic, piecewise continuous in $s$, and uniformly (in $s$) Lipschitz in $x$.

First note that 
\begin{align*}
  \Exi\left( \frac{\veps(x)}{\sqrt{\eps}}\right)^2 &= \int_0^1\int_0^1 G(x,t)G(x,s)\frac{\Covxi(t,s,\frac{t-s}{\eps})}{\eps}\ds\dt\\
  &= \int_0^1G(x,s)\left[ \int_{-s/\eps}^{(1-s)/\eps}G(x,\eps t + s)\Covxi(\eps t+s, s, t)\dt \right]\ds.
\end{align*}
Using \eqref{eq:cov_pointwise_bound} and dominated convergence, we therefore have
\begin{align}
  \begin{split}
   \Exi\left( \frac{\veps}{\sqrt{\eps}}\right)^2  &\to \int_0^1G(x,t)^2\sigma^2(t)\dt,\qquad
    \sigma^2(t) := \int_{-\infty}^\infty \Covxi(t,t,q)\d q.
  \end{split}
  \label{eq:convergent_variance}
\end{align}
The scaling (in $\eps$) and the fact that $\qeps$ is mean zero indicate that a central-limit type result should show convergence to a Gaussian random variable.  This is indeed the case.  
\begin{lemma}
  If $G(x,s)$ is deterministic, piecewise continuous in $s$, and uniformly (in $s$) Lipschitz in $x$, then 
  \begin{align*}
    \frac{1}{\sqrt{\eps}}\veps(x) = \frac{1}{\sqrt{\eps}}\int_0^1 G(x,t)\qeps(t)\dt & \xrightarrow{dist.} \int_0^1G(x,t)\sigma(t)\dW_t,
  \end{align*}
where $W_t$ is a standard Brownian motion.
  \label{lemma:one_d_integral_convergence}
\end{lemma}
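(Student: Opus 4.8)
The plan is a Bernstein big-block/small-block decomposition followed by the Lindeberg--Feller central limit theorem, with the absence of a uniform ellipticity lower bound absorbed into the sixth-moment hypothesis of Theorem \ref{theorem:corrector} and the mixing condition \eqref{assumptions:mixing_condition}. Writing $q(t,y):= \frac{1}{A(t,y)}-\Expxi{1/A}(t)$ and substituting $s=t/\eps$, one has
\begin{align*}
  \frac{\veps(x)}{\sqrt{\eps}} &= \sqrt{\eps}\int_0^{\eps^{-1}}G(x,\eps s)\,q(\eps s,s)\ds.
\end{align*}
Truncating the upper limit to $N:=\epsint$ changes this by $O(\sqrt{\eps})$ in $L^2$ (by \eqref{eq:cov_pointwise_bound}), so it suffices to study $\sqrt{\eps}\sum_{n=1}^{N}\zeta_n$ with $\zeta_n:=\int_{n-1}^{n}G(x,\eps s)\,q(\eps s,s)\ds$; each $\zeta_n$ has conditional mean zero, and \eqref{eq:convergent_variance} already gives $\Exi(\veps/\sqrt{\eps})^2\to\int_0^1 G(x,t)^2\sigma^2(t)\dt$. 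I would then fix slowly growing integers $p=p(\eps)$ and $r=r(\eps)$ with $r/p\to0$, $p/N\to0$, and $r$ large enough that $(N/p)\varphi(r)\to0$ (possible because $\varphi$ is decreasing and $\varphi^{1/3}\in L^1$), partition $\{1,\dots,N\}$ into consecutive big blocks $B_1,\dots,B_k$ of length $p$ separated by small blocks of length $r$, and set $U_j:=\sqrt{\eps}\sum_{n\in B_j}\zeta_n$. The covariance bound \eqref{eq:cov_pointwise_bound} makes the small-block contribution $O(r/p)$ in $L^2$, while a telescoping estimate on conditional characteristic functions using \eqref{assumptions:mixing_condition} bounds $|\Exi e^{it\sum_j U_j}-\prod_j\Exi e^{itU_j}|$ by $C(N/p)\varphi(r)\to0$; hence it is enough to prove a CLT for an array $\{\tilde U_j\}$ of \emph{independent} variables with $\tilde U_j$ distributed as $U_j$.

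Next I would reduce each $U_j$ to a functional of the genuinely $s$-stationary fluctuations. Since $s\mapsto G(x,s)$ is piecewise continuous it is nearly constant on each window of length $\eps p$ away from its finitely many jumps, and the $O(1)$ big blocks meeting a jump contribute $o(1)$ after the $\sqrt{\eps}$ scaling; on the remaining blocks, mean-square continuity of $t\mapsto A(t,0)^{-1}$ (equivalently continuity of $\Covxi$ at the diagonal, cf.\ the Remark after Theorem \ref{theorem:corrector}) lets me replace $q(\eps s,s)$ by $q(t_j,s)$ for a representative point $t_j$ of the window, with $L^2$ error tending to zero. Thus $U_j$ is, up to negligible terms, $G(x,t_j)$ times $\sqrt{\eps}$ times the length-$p$ block sum of the stationary-in-$s$ field $q(t_j,\cdot)$.

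Finally I would apply Lindeberg--Feller to $\sum_j\tilde U_j$. Two inputs are needed: (a) $\sum_j\Exi\tilde U_j^2\to\int_0^1 G(x,t)^2\sigma^2(t)\dt$, which follows by reading the left side as a Riemann sum whose $j$-th term is $G(x,t_j)^2$ times the variance of a length-$p$ block sum of $q(t_j,\cdot)$, the latter converging to $\sigma^2(t_j)=\int_{-\infty}^\infty\Covxi(t_j,t_j,q)\d q$ by \eqref{eq:cov_pointwise_bound}, consistent with \eqref{eq:convergent_variance}; and (b) the Lindeberg condition, which I would obtain from the Lyapunov-type bound $\Exi U_j^4\le C(\eps p)^2$. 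This fourth-moment estimate is the crux: expanding $\Exi U_j^4$ over quadruples of indices in $B_j$ and applying the $\rho$-mixing covariance inequality together with H\"older --- the exponents $2$ and $3$ being precisely why the cube-root integrability $\varphi^{1/3}\in L^1$ and the sixth moment $\Expxi{q(x,y)^6}\le C_\xi$ are assumed --- yields an Ibragimov-type summable bound, so that $\Exi U_j^4=O((\eps p)^2)$ and $\sum_j\Exi U_j^4=O(\eps p)\to0$. This gives uniform integrability and the Lindeberg condition at once, whence $\sum_j\tilde U_j\xrightarrow{dist.}\calN(0,\int_0^1 G(x,t)^2\sigma^2(t)\dt)$; for the fixed $x$ at hand this is exactly the law of $\int_0^1 G(x,t)\sigma(t)\dW_t$.

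The main obstacle is the fourth-moment bound on the block sums in step (b): with no uniform lower bound on $A$ one cannot control $q$ pointwise, so the estimate must be pushed through the interplay of the sixth-moment control and the $\rho$-mixing decay, and it is exactly there that the cube-root integrability of $\varphi$ is forced. The remaining steps --- the choice of $p$ and $r$, the Riemann-sum convergence of the variances, and disposing of the finitely many discontinuities of $G$ --- are routine.
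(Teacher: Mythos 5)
Your proof is correct in outline, but it takes a genuinely different route from the paper's. The paper does not run the Bernstein big-block/small-block machinery itself: for the stationary, constant-weight case it defines the block variables $q_k=\int_{k-1}^k q(s)\ds$ and simply invokes Billingsley's CLT for $\rho$-mixing stationary sequences (Theorem 19.2 of \cite{Billingsley_Convergence}), the summability $\sum_n\rho_n<\infty$ following from $\rho_n\le\varphi(n-1)$; it then verifies $\nu^2=\sigma^2$. Non-constant weights and the slow variable are handled exactly as you propose (piecewise-constant approximation $m_h$, $q_h$ with vanishing $L^2$ error via continuity of $\Covxi$ on the diagonal), but the paper must then separately prove that the Gaussian limits coming from different subintervals are asymptotically independent; it does this with an $\eta$-corridor argument, trimming $O(\eta^{1/2})$ pieces in $L^2$ and using the mixing bound $\varphi(\eta/\eps)$, with $\eta=\eps^{1/2}$. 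Your blocking scheme absorbs that independence step automatically (the telescoping characteristic-function estimate), at the price of having to establish the Lyapunov/fourth-moment bound $\Exi U_j^4\le C(\eps p)^2$ by hand; you correctly identify this as the crux and correctly locate where $\varphi^{1/3}\in L^1$ (which forces $\varphi(r)\lesssim r^{-3}$, making your choice of $p,r$ feasible) and the sixth-moment hypothesis enter --- the paper proves a closely analogous sixth-moment estimate in its Lemma \ref{lemma:fourth_moment}, though for a different purpose. One point to make explicit: the lemma is used at the level of the process $x\mapsto\eps^{-1/2}\veps(x)$, so a single fixed $x$ does not suffice; the paper reduces to finite-dimensional distributions via Cram\'er--Wold with the weight $m(s)=\sum_jk_jG(x_j,s)$ (tightness being checked just before the proof). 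Your argument applies verbatim to such piecewise-continuous linear combinations, so this is a presentational rather than substantive gap, but it should be stated.
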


The following result allows us to reduce the problem of proving convergence (of a stochastic process) to one of studying finite dimensional distributions \cite{Billingsley_Convergence}.
\begin{proposition}
  Suppose $(Z_n;1\leq n\leq \infty)$ are random variables with values in the space of continuous functions $C([0,1])$.  Then $Z_n$ converges in distribution to $Z_\infty$ provided that:
  \begin{enumerate}
    \item[(a)] any finite-dimensional joint distribution $(Z_n(x_1),\dots,Z_n(x_k))$ converges to the joint distribution $(Z_\infty(x_1),\dots,Z_\infty(x_k))$ as $n\to\infty$.
    \item[(b)] $(Z_n)$ is a tight sequence of random variables.  A sufficient condition for tightness of $(Z_n)$ is the following Kolmogorov criterion:  There exist positive constants $\nu$, $\beta$ and $\delta$ such that 
      \begin{enumerate}
        \item[(i)] $\sup_{n\geq1}\Exp{|Z_n(t)|^\nu}<\infty$, for some $t\in[0,1]$,
        \item[(ii)] $\Exp{|Z_n(s)-Z_n(t)|^\beta}\lesssim |t-s|^{1+\delta}$,
      \end{enumerate}
      uniformly in $n\geq1$ and $t,s\in[0,1]$.
  \end{enumerate}
  \label{proposition:tightness}
\end{proposition}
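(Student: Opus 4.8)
The plan is to reduce the statement to the standard machinery of weak convergence on the Polish space $C([0,1])$ (equipped with the uniform norm) and then to check the two hypotheses. First I would recall two classical facts: (1) by Prokhorov's theorem a tight family of laws on $C([0,1])$ is relatively compact for weak convergence; and (2) the finite-dimensional distributions determine a Borel probability measure on $C([0,1])$ uniquely, since the cylinder sets generate its Borel $\sigma$-algebra. Granting tightness of $(Z_n)$ together with hypothesis (a), every weakly convergent subsequence of $(Z_n)$ must have limit $Z_\infty$ by (2), and relative compactness from (1) then forces the whole sequence to converge in distribution to $Z_\infty$. Thus the entire content of the proof is to show that the Kolmogorov criterion in (b) implies tightness.

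For the tightness step I would invoke the Arzel\`a--Ascoli characterization of relatively compact subsets of $C([0,1])$ (bounded at one point, with uniformly vanishing moduli of continuity $w_f(\delta):=\sup_{|t-s|\le\delta}|f(t)-f(s)|$). Its probabilistic translation (see \cite{Billingsley_Convergence}) is that $(Z_n)$ is tight provided (i) the laws of $Z_n(t_0)$ are tight in $\Rone$ for some fixed $t_0\in[0,1]$, and (ii) for every $\eta>0$, $\lim_{\delta\downarrow0}\limsup_{n\to\infty}P[w_{Z_n}(\delta)>\eta]=0$. Hypothesis (b)(i) furnishes $\sup_n\Exp{|Z_n(t_0)|^\nu}<\infty$, which gives (i) immediately by Markov's inequality (and, once (ii) is known, propagates boundedness in probability to all $t$ via $|Z_n(t)|\le|Z_n(t_0)|+w_{Z_n}(1)$).

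The only place where genuine estimates enter is the passage from the moment bound $\Exp{|Z_n(s)-Z_n(t)|^\beta}\lesssim|t-s|^{1+\delta}$, uniform in $n$, to the modulus control (ii): this is the Kolmogorov--Chentsov chaining argument carried out with constants independent of $n$. Fixing $\gamma\in(0,\delta/\beta)$ and restricting to dyadic points $t=k2^{-m}$, Markov's inequality together with the moment bound gives $P[|Z_n(k2^{-m})-Z_n((k-1)2^{-m})|>2^{-\gamma m}]\lesssim 2^{-(1+\delta-\gamma\beta)m}$; summing over $k\le2^m$ leaves $2^{-(\delta-\gamma\beta)m}$, which is summable in $m$ and uniform in $n$. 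A telescoping argument along dyadic rationals then upgrades this to a uniform estimate $\limsup_n\Exp{w_{Z_n}(\delta)^\beta}\lesssim\delta^{\delta'}$ for some $\delta'>0$, and a final Markov inequality yields (ii).

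I expect the main obstacle to be purely bookkeeping: confirming that none of the chaining constants depend on $n$ — which they do not, precisely because the assumed moment bound is already uniform in $n$ — so the argument is routine. Indeed this proposition is entirely classical, assembled from the material in Sections 7 and 12 of \cite{Billingsley_Convergence}, and in practice one would simply cite it; the sketch above is the route one would reconstruct if a self-contained proof were wanted.
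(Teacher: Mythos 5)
Your proposal is correct: the paper gives no proof of this proposition, stating it as a classical result cited from \cite{Billingsley_Convergence}, and your sketch accurately reconstructs the standard argument there (Prokhorov plus uniqueness of the limit law from finite-dimensional distributions, with tightness obtained from the moment condition via a Kolmogorov--Chentsov chaining estimate whose constants are uniform in $n$). Nothing is missing, and in the context of the paper one would simply cite Billingsley as you note.
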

Tightness is easily verified.  Indeed, \eqref{eq:cov_pointwise_bound} and \eqref{eq:convergent_variance} show that 
\begin{align*}
  \Exi|\veps(x)-\veps(y)|^2 &=\Exi\left( \int_0^1[G(x,t)-G(y,t)]\qeps(t)\dt \right)^2 \\
  &\to \int_0^1 \left[ G(x,t)-G(y,t) \right]^2\sigma^2(t)\dt \leq C|x-y|^2.
\end{align*}
so condition (b-i) is met with $\nu=2$, and (b-ii) is met with $\beta=2$ and $\delta=1$.

This means that to prove the theorem we simply need to fix $(x_1,\dots,x_n)$ and show 
\begin{align}
  \begin{split}
    \eps^{-1/2}\vec\veps  :&= \eps^{-1/2}(v_\eps(x_1),\dots,v_\eps(x_n)) \xrightarrow{dist.} \vec v := (v(x_1),\dots,v(x_n)),\\
    v(x) :&= \int_0^1G(x,t)\sigma(t)\dW_t.
  \end{split}
  \label{eq:finite_dim_dist_limits}
\end{align}

\begin{proof}

Any finite dimensional distribution $\eps^{-1/2}\vec\veps$ has characteristic function 
\begin{align*}
  \Phi_\eps(k) :&= \Expxi{e^{i\sum_{j=1}^n k_j \eps^{-1/2}\veps(x_j)}},\quad k = (k_1,\dots,k_n).
\end{align*}
The above characteristic function may be recast as
\begin{align*}
  \Phi_\eps(k) &= \Expxi{e^{i\int_0^1 m(s)\frac{1}{\sqrt{\eps}}\qeps(s)ds}},\quad m(s) := \sum_{j=1}^n k_j G(x_j,s).
\end{align*}
As a consequence, convergence of the finite dimensional distributions will be proved if we can show convergence of
\begin{align}
  \Ieps :&= \int_0^1 m(s)\frac{1}{\sqrt{\eps}}\qeps(s)\ds \xrightarrow{dist.} N_1 := \int_0^1 m(s)\sigma(s)\dW_s,
  \label{eq:single_point_convergence}
\end{align}
for piecewise continuous $m$.

Proceeding, we now show that $\Ieps\xrightarrow{dist.}N_1$.  We first consider the case of constant $m\equiv1$ and $\qeps(s) = q(s,\sovereps)=q(\sovereps)$.  To that end, set
\begin{align*}
  q_k :&= \int_{(k-1)\eps}^{k\eps}\frac{1}{\eps}q(\frac{s}{\eps})\ds = \int_{k-1}^k q(s)\ds,\quad k=1,\dots,\epsint.
\end{align*}
Then $\Ieps(x) = \sum_{k=1}^{\epsint}q_k+R(\eps)$, where $R(\eps)\to0$ in $L^2$ (and therefore in probability), and hence may be ignored.  We therefore consider the limit
\begin{align*}
  \sqrt{\eps}\sum_{k=1}^{\epsint}q_k &= \frac{1}{\sqrt{\epsint}}\sum_{k=1}^\epsint q_k + R'(\eps), \quad \|R'(\eps)\|_{L^2(\Omega)}\lesssim\eps.
\end{align*}
So we ignore $R'$ and consider the limit of the summation.  Following theorem 19.2 in \cite{Billingsley_Convergence}, we define the sigma fields $\calF_n$, $\calF^n$ generated by $\{q_k\st k\leq n\}$, $\{q_k\st k\geq n\}$ respectively and set
\begin{align*}
  \rho_n :&= \sup\left\{ |\Exp{\eta_1\eta_2}|\st \eta_1\in\calF_1, \eta_2\in\calF^n, \Exp{\eta_i}=0, \Exp{\eta_i^2}=1, i=1,2 \right\}.
\end{align*}
Then, so long as $\sum_{n=1}^\infty\rho_n<\infty$, 
\begin{align*}
  \frac{1}{\sqrt{\epsint}}\sum_{k=1}^\epsint q_k &\xrightarrow{dist.} \calN(0,\nu^2),\quad \nu^2 := \Exp{q_1^2} + 2\sum_{n=2}^\infty \Exp{q_1q_n}.
\end{align*}
Since $\rho_n\leq\varphi(n-1)$, the summability condition on $\rho_n$ is implied by assumptions \ref{assumptions:mixing_condition}.  We now show that $\nu^2=\sigma^2$.  Indeed,
\begin{align*}
  \Exp{q_1^2} &= \Exp{\left[ \int_0^1q(s)\ds\right]^2} = \int_0^1\int_0^1\Covxi(s-s')\ds\ds'.
\end{align*}
Also, using the symmetry of $\Covxi$, we have
\begin{align*}
  \sum_{n=2}^\infty\Exp{q_1q_n} &= \int_0^1\int_1^\infty\Covxi(s-s')\ds\ds'=\int_0^1\int_{-\infty}^0\Covxi(s-s')\ds\ds'.
\end{align*}
Therefore,
\begin{align*}
  \nu^2 &= \int_0^1\int_{-\infty}^\infty\Covxi(s-s')\ds\ds' = \sigma^2.
\end{align*}

This shows \eqref{eq:single_point_convergence}.

To prove the theorem in the case of non-constant $m(s)$, and $q=q(s,\sovereps)$ we note that if $m$ is replaced by $m_h$, and $q$ is replaced by $q_h$, giving us $\Ieps^h$, then 
\begin{align}
  \begin{split}
    \Exi|\Ieps^h(x) - \Ieps(x)|^2 &= \frac{1}{\eps}\Exi\int_0^1\int_0^1[m(s)q(s,\sovereps)-m_h(s)q_h(s,\sovereps)]\\
    &\quad\times[m(t)q(t,\tovereps)-m_h(t)q_h(t,\tovereps)]\ds\dt.
  \end{split}
  \label{eq:vh_minus_v}
\end{align}
We will choose $m_h$, $q_h$ such that the above expectation vanishes in the limit. 
Split $[0,1]$ into subintervals of size $h$ (with $1/h\subset\Nat$) with endpoints $t_j:= jh$, $j=1,\dots,1/h$.  Now set
\begin{align*}
  m_h(t) := m(t_j), \mbox{ and } q_h(t,\tovereps) := q(t_j, \tovereps),\mbox{ for } t\in[t_j,t_{j+1}).
\end{align*}
Taking expectation inside of the integral \eqref{eq:vh_minus_v}, changing $t\mapsto\eps t$, then $t\mapsto t+s/\eps$ we have (similar to \eqref{eq:convergent_variance})
\begin{align*}
  \Exi|\Ieps^h-\Ieps|^2&= \int_0^1\int_{-\infty}^\infty Q_{\eps,h}(s,t)\dt\ds,
\end{align*}
where $|Q_{\eps,h}(s,t)|\leq C\Gamma(t)$ (with $\Gamma$ from \eqref{eq:cov_pointwise_bound}).  Using the continuity (on the diagonal) of $\Covxi$, and the piecewise continuity of $G$, we also have $Q_{\eps,h}\to0$ as $(\eps,h)\to(0,0)$ (in any way whatsoever, for \emph{a.e.} (s,t)).  Therefore, it suffices to prove the lemma \ref{lemma:one_d_integral_convergence} with $m_h$, $q_h$ replacing $m$, $q$.

We proceed to split the integral defining $\Ieps$ up.
\begin{align*}
  \Ieps :&= \int_0^1m_h(t)q_h(t,\tovereps)\dt = \sum_{j=0}^{1/h-1}\int_{[t_j,t_{j+1})}m(t_j)q(t_j, \tovereps)\dt.
\end{align*}
Each subintegral is handled exactly as before, yielding 
\begin{align*}
  m(t_j)\int_{[t_j,t_{j+1})}q(t_j,\tovereps)\dt&\xrightarrow{dist.} \int_{[t_j,t_{j+1})}m(t_j)\sigma(t_j)\dW_t.
\end{align*}
It remains then to show that the limiting Gaussians (above) are independent.  We do this in the case where $\Ieps=I_{\eps,1}+I_{\eps,2}$ is split into two intervals, the general case following by induction.
To that end, we show that for all $k_1,k_2\in\Rone$
\begin{align}
  \calE :&= \left|\Expxi{e^{i(k_1I_{1,\eps} + k_2I_{2,\eps})}} - \Expxi{e^{ik_1I_{1,\eps}}}\Expxi{e^{ik_2I_{2,\eps}}}\right|\to0.
  \label{eq:independent_limit}
\end{align}
Define
\begin{align*}
  P^\eta_{\eps,1} :&= \int_{\eta}^{x-\eta}\frac{1}{\sqrt{\eps}}q(\tovereps)\dt,\quad Q^\eta_{\eps,1} := I_{\eps,1}- P^\eta_{\eps,1},\\
  P^\eta_{\eps,2} :&= \int_{x+\eta}^{y-\eta}\frac{1}{\sqrt{\eps}}q(\tovereps)\dt,\quad Q^\eta_{\eps,2} := I_{\eps,2}- P^\eta_{\eps,2}.
\end{align*}
Then the first term in $\calE$ decomposes as
\begin{align*}
  \Expxi{e^{i(k_1I_{\eps,1} + k_2I_{\eps,2})}} &= \Expxi{(e^{ik_1Q^\eta_{\eps,1}}-1)e^{ik_1P^\eta_{\eps,1} + ik_2I_{\eps,2}}} \\
  &\quad+ \Expxi{e^{ik_1P^\eta_{\eps,1} + ik_2I_{\eps,2}}}.
\end{align*}
The first term in the decomposition is small since
\begin{align}
  \left|\Expxi{(e^{ik_1Q^\eta_{\eps,1}}-1)e^{ik_1P^\eta_{\eps,1} + ik_2I_{\eps,2}}} \right|&\leq \left( \Exi|e^{ikQ^\eta_{\eps,1}}-1|^2 \right)^{1/2}\lesssim\eta^{1/2}.
  \label{eq:QminusOneExpectation}
\end{align}
As for the second term,
\begin{align*}
  &\Expxi{e^{ik_1P^\eta_{\eps,1}+ik_2I_{\eps,2}}} - \Expxi{e^{ik_1I_{\eps,1}}}\Expxi{e^{ik_2I_{\eps,2}}}\\
  &\quad=\left[ \Expxi{e^{ik_1P^\eta_{\eps,1} + ik_2I_{\eps,2}}} - \Expxi{e^{ik_1P^\eta_{\eps,1}}}\Expxi{e^{ik_2I_{\eps,2}}} \right] \\
  &\qquad- \left[ \Expxi{\left( e^{ik_1Q^\eta_{\eps,1}}-1 \right)e^{ik_1P^\eta_{\eps,1}}}\Expxi{e^{ik_2I_{\eps,2}}} \right].
\end{align*}
The first bracketed term is $\lesssim\varphi(\eta/\eps)$ by our mixing condition \eqref{assumptions:mixing_condition}, and the second is $\lesssim\eta^{1/2}$ in a manner similar to \eqref{eq:QminusOneExpectation}.  Therefore,
\begin{align*}
  \calE&\lesssim \left( \varphi\left( \frac{2\eta}{\eps}\right) + \eta^{1/2} \right) \to0,\mbox{ as }\eps\to0
\end{align*}
for say $\eta=\eps^{1/2}$.

We have thus shown
\begin{align*}
  \Ieps &\xrightarrow{dist.} \int_0^1 m_h(t)\sigma_h(t)\dW_t,
\end{align*}
and this completes the proof.
\end{proof}

We also prove a complementary lemma that allows us to deal with the lack of a uniform (in $\omega$) ellipticity lower bound.  
\begin{lemma}
  Suppose $A$ satisfies the hypothesis of theorem \ref{theorem:corrector}.  Then with
  \begin{align*}
    q(s,\sovereps) :&= \frac{1}{A(s,\sovereps)} - \frac{1}{A_0(s)},
  \end{align*}
  we have for any $H\in L^\infty([0,1])$,
  \begin{align*}
    \calE := \Expxi{\left( \int_0^1H(s)q(s,\sovereps)\ds \right)^6} &\leq C_\xi \left( \int_0^1\int_0^1H(s)H(t)\rho^{1/3}(\frac{s-t}{\eps})\ds\dt \right)^3\\
    &\leq \eps^3C_\xi \|H\|_\infty^6 \|\rho^{1/3}\|_{L^1}.
  \end{align*}
  \label{lemma:fourth_moment}
\end{lemma}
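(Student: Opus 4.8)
The plan is to expand the sixth power into a six--fold integral, reduce everything to a pointwise bound on the six--point correlation of the integrand, and then integrate. Write $q_i:=q(s_i,s_i/\eps)$. By stationarity of $A$ in its second variable, $A_0(s)^{-1}=\Expxi{A(s,\cdot)^{-1}}$, so $\Expxi{q_i}=0$, and by hypothesis $\Expxi{q_i^6}\le C_\xi$. Then
\begin{align*}
  \calE &= \int_{[0,1]^6} H(s_1)\cdots H(s_6)\,\Expxi{q_1q_2q_3q_4q_5q_6}\,\d s_1\cdots\d s_6 .
\end{align*}
The integrand is unchanged under a joint permutation of $(s_1,\dots,s_6)$, so it suffices to estimate $\Expxi{q_1\cdots q_6}$ on the ordered region $s_1\le\cdots\le s_6$; the resulting bound, being symmetric in the $s_i$, then holds everywhere. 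The target is the \emph{pointwise} estimate
\begin{align*}
  \bigl|\Expxi{q_1\cdots q_6}\bigr| &\le C_\xi\sum_{P}\ \prod_{\{i,j\}\in P}\varphi^{1/3}\!\left(\frac{|s_i-s_j|}{\eps}\right),
\end{align*}
where $\varphi$ is the mixing function of Assumptions~\ref{assumptions:mixing_condition} (denoted $\rho$ in the statement) and $P$ runs over the finitely many partitions of $\{1,\dots,6\}$ into three pairs.

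To prove this estimate I would use the identity $\Expxi{XY}=\bigl(\Expxi{XY}-\Expxi{X}\Expxi{Y}\bigr)+\Expxi{X}\Expxi{Y}$ together with the normalized form of the mixing hypothesis, $|\Expxi{XY}-\Expxi{X}\Expxi{Y}|\le\varphi(\d(A,B))\,\|X-\Expxi{X}\|_2\,\|Y-\Expxi{Y}\|_2$ for $X$ measurable w.r.t.\ $\calS_A$ and $Y$ w.r.t.\ $\calS_B$, applied recursively across the gaps of the ordered configuration. The sixth-moment bound enters through H\"older: a product of $k\le 3$ of the $q_i$ obeys $\|q_{i_1}\cdots q_{i_k}\|_2\le\prod_\ell\|q_{i_\ell}\|_6\le C_\xi^{k/6}$, which is exactly why the recursion must be organized so that no block ever exceeds three points --- this is the place where the absence of a uniform (in $\omega$) ellipticity lower bound is absorbed. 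The exponent $1/3$ then comes from the fact that $\varphi$ is decreasing: the single mixing factor $\varphi(g/\eps)$ extracted at a gap of length $g$ satisfies $\varphi(g/\eps)\le\varphi^{1/3}(g_a/\eps)\varphi^{1/3}(g_b/\eps)\varphi^{1/3}(g_c/\eps)$ whenever $g_a,g_b,g_c\le g$, and at each split one chooses the splitting gap and the auxiliary gaps $g_a,g_b,g_c$ so that the corresponding index pairs assemble into a pair partition; the residual $\Expxi{X}\Expxi{Y}$ contributions are treated the same way and in fact carry an even higher power of $\eps$.

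Granting the pointwise bound, integrate term by term. For each pair partition $P$ the three pairs decouple, so
\begin{align*}
  \int_{[0,1]^6}|H(s_1)\cdots H(s_6)|\prod_{\{i,j\}\in P}\varphi^{1/3}\!\left(\frac{|s_i-s_j|}{\eps}\right)\d s_1\cdots\d s_6 &= \left(\int_0^1\!\!\int_0^1 |H(s)||H(t)|\,\varphi^{1/3}\!\left(\frac{|s-t|}{\eps}\right)\d s\,\d t\right)^{3}.
\end{align*}
Summing over the finitely many $P$ and absorbing the count into $C_\xi$ yields the first inequality of the lemma. The second follows from $\int_0^1\!\int_0^1 |H||H|\,\varphi^{1/3}(|s-t|/\eps) \le \|H\|_\infty^2\int_0^1\!\d s\int_{\Rone}\varphi^{1/3}(u/\eps)\,\d u = \eps\,\|H\|_\infty^2\,\|\varphi^{1/3}\|_{L^1}$, which is finite by Assumptions~\ref{assumptions:mixing_condition}, upon cubing.

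I expect the main obstacle to be the recursive six--point estimate itself --- concretely, the combinatorial bookkeeping that keeps every H\"older exponent at or below $6$ (so that only the assumed sixth moment is used) while still producing a clean sum over pair partitions with the exponent $1/3$. The individual covariance splittings and $L^2$ H\"older bounds are routine; arranging them over all orderings of the six points so that the extracted mixing factors line up into pair partitions --- in particular handling configurations in which the largest gap is not the central one --- is the delicate step.
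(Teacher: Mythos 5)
Your skeleton (expand the sixth power into a six-fold integral, prove a pointwise bound on $\Expxi{q_1\cdots q_6}$, then integrate and factorize over pairs) matches the paper's, and the integration/factorization at the end is fine. The gap is precisely the step you yourself defer as the ``main obstacle'': the pointwise six-point estimate via recursive gap-splitting does not close with only sixth moments. To make your monotonicity step $\varphi(g/\eps)\le\varphi^{1/3}(g_a/\eps)\varphi^{1/3}(g_b/\eps)\varphi^{1/3}(g_c/\eps)$ produce a pair partition with $g_a,g_b,g_c\le g$, the splitting gap $g$ must dominate the pair gaps, which forces you to split at the largest consecutive gap of the ordered configuration; but then you cannot control block sizes, and a $5+1$ split requires $\|q_{i_1}\cdots q_{i_5}-\Expxi{q_{i_1}\cdots q_{i_5}}\|_2$, which by H\"older needs tenth moments (exponents $p_j$ with $\sum 1/p_j=1$ and $2p_j\le6$ do not exist for five factors). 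If instead you always split $3+3$ to keep blocks within reach of the sixth moment, the splitting gap need not dominate the within-block gaps and the monotonicity step fails. So the ``combinatorial bookkeeping'' you postpone is not bookkeeping; as described, the route is obstructed.

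The idea you are missing --- and the paper's entire proof --- is that no mixing-based decoupling of the six-point function is needed. Apply generalized H\"older (repeated Cauchy--Schwarz) with the single fixed pairing $(1,2),(3,4),(5,6)$:
\begin{align*}
  \left|\Expxi{q_1\cdots q_6}\right| &\leq \left( \Expxi{|q_1|^3|q_2|^3} \right)^{1/3}\left( \Expxi{|q_3|^3|q_4|^3} \right)^{1/3}\left( \Expxi{|q_5|^3|q_6|^3} \right)^{1/3}.
\end{align*}
Each factor is now a two-point quantity built from functions of the field at single locations, so the mixing condition applied to $q_i^3$ (whose $L^2$ norm is exactly the assumed sixth moment) bounds it by $C_\xi\,\rho(|s_i-s_j|/\eps)$; no ordering of the points, no recursion, and no sum over pair partitions is needed, and the six-fold integral then factorizes into the cube of the double integral exactly as in your last step. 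Your target pointwise inequality is true --- it is implied by this stronger single-product bound --- but the proof of it is this one-liner, not a cumulant-style recursion.
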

\begin{proof}
  \begin{align*}
    \calE&\leq \int_{[0,1]^6}|H(s_1)\cdots H(s_6)|\left|\Expxi{q(s_1,\frac{s_1}{\eps})\cdots q(s_6,\frac{s_6}{\eps})}\right|\ds.
  \end{align*}
The expectation can be broken up using Cauchy-Schwartz into a product that looks like
\begin{align*}
  \left( \Expxi{X_1^3X_2^3} \Expxi{X_3^3X_4^3} \Expxi{X_5^3X_6^3} \right)^{1/3}.
\end{align*}
For each term $\Expxi{X_i^3X_{i+1}^3}$, the mixing condition, and our bound on $6^{th}$ moments gives
  \begin{align*}
    \Expxi{q(s,\sovereps)^3q(t,\tovereps)^3} &\leq \rho(\frac{s-t}{\eps})C_\xi.
  \end{align*}
  The result follows.
\end{proof}
\subsubsection{Asymptotic expansion of the solution $\ueps$}
\label{subsection:asymptotic_expansion}
We now complete the proof of theorem \ref{theorem:corrector}.
Starting from \eqref{eq:asymptotic_expansion}, \eqref{eq:veps_rewritten} we write
\begin{align*}
  \frac{\ueps(x) - u_0(x)}{\sqrt{\eps}} &= \frac{1}{\sqrt{\eps}}\veps(x) + \frac{1}{\sqrt{\eps}}R_\eps(x).
\end{align*}
The convergence of $\eps^{-1/2}\veps$ is assured by lemma \ref{lemma:one_d_integral_convergence}.  We now show that $\eps^{-1/2}R_\eps$ is tight and converges pointwise to zero.  Then, proposition \ref{proposition:tightness} shows that $\eps^{-1/2}R_\eps\xrightarrow{dist.}0$ in the space of continuous paths.

To that end we write $\eps^{-1/2}R(x)$ as a sum of terms of the form
\begin{align*}
  Z\int_0^x\Beps(s)\ds,\qquad \Beps(s) := \frac{1}{\sqrt{\eps}}\left( \frac{1}{\Aeps(s)} - \frac{1}{A_0(s)} \right).
\end{align*}
for a constant random variable $Z$, with either
\begin{align*}
  |Z|&\leq C'_\xi (X_1^\eps)^2,\quad |Z|\leq C'_\xi |X_1^\eps Y_1^\eps|,\quad |Z|\leq C'_\xi|X_1^\eps|\brac{1/\Aeps},\quad\mbox{or}\quad |Z| \leq C'_\xi Y_1^\eps. 
\end{align*}
We first show that $\Expxi{(\eps^{-1/2}(R_\eps(y)-R_\eps(x)))^3}\lesssim\eps|y-x|^2$, meeting condition (b-ii) of proposition \ref{proposition:tightness} with $\beta=3$, $\delta=1$.  Then choosing $x=0$, we have (since $R_\eps(0)=0$) $\eps^{-1/2}R_\eps(y)\to0$ in $L^3(\Omega,\Pxi)$.  This meets condition (b-i) of proposition \ref{proposition:tightness} with $\nu=3$ and also shows that all finite dimensional distributions converge to the zero vector as well (meeting condition (a) the proposition).

Using H\"older, with $1/p + 1/p' = 1$, and $r>0$
\begin{align*}
  \Expxi{Z\int_x^y\frac{1}{\Beps(s)}\ds}^r&\leq \left( \Expxi{|Z|^{rp}} \right)^{1/p}\left( \Expxi{\left| \int_x^y\frac{1}{\Beps(x)}\ds \right|^{rp'}} \right)^{1/p'}.
\end{align*}
We choose $r=2$, $p=3/2$, $p'=3$ to get
\begin{align*}
  \Expxi{Z\int_x^y\frac{1}{\Beps(s)}\ds}^2&\leq \left( \Expxi{|Z|^3} \right)^{2/3}\left( \Expxi{\left| \int_x^y\frac{1}{\Beps(x)}\ds \right|^{6}} \right)^{1/3}.
\end{align*}
Now due to lemma \ref{lemma:fourth_moment}, 
\begin{align*}
  \left(\Expxi{\left| \int_x^y\frac{1}{\Beps(x)}\ds \right|^6} \right)^{1/3} &\leq C_\xi^{1/3}\|\rho\|_\infty |y-x|^2.
\end{align*}
Also, using H\"older and lemma \ref{lemma:fourth_moment}, $\Expxi{|Z|^3}$ is $\lesssim \eps$ in all cases of $Z$.  We therefore have
\begin{align*}
  \Expxi{\left| \frac{R_\eps(y)-R_\eps(x)}{\sqrt{\eps}} \right|^3} &\leq \eps C''_\xi |y-x|^2.
\end{align*}

\section*{Acknowledgment}
  The authors would like to thank Mark Adams and Yu Gu for many helpful discussions.  This work was supported in part by NSF grant 0904746 and NSF RTG grant DMS-0602235. Any opinions, findings, and conclusions or recommendations expressed in this material are those of the author(s) and do not necessarily reflect the views of the National Science Foundation.


\end{document}